\DeclareMathAlphabet{\mathpzc}{OT1}{pzc}{m}{it}
\DeclareMathAlphabet{\mathpzc}{OT1}{pzc}{m}{it}
\newcommand{\Rey}{\mathrm{Re}}
\newcommand{\Gra}{\mathrm{Gr}}
\newcommand{\dd}{\mathrm{d}}
\newcommand{\oo}{\mathrm{o}}
\newcommand{\ii}{\mathrm{i}}
\newcommand{\ww}{\mathrm{w}}
\newcommand{\nn}{\mathrm{n}}
\newcommand{\se}[1]{\times10^{#1}}
\title{On existence and uniqueness of solutions to a Boussinesq system with nonlinear and mixed boundary conditions
}
\author{Rafael Arndt\footnotemark[1]\thanks{Department of Mathematical Sciences and the Center for Mathematics and Artificial Intelligence (CMAI),  George  Mason  University,  Fairfax,  VA  22030,  USA, \texttt{tarndt@gmu.edu} (R. Arndt), \texttt{crautenb@gmu.edu} (C. N. Rautenberg)\vspace{.25cm}}
\and
Andrea N. Ceretani\footnotemark[2]\thanks{Instituto de Investigaciones Matem\'{a}ticas ``Luis A. Santal\'{o}'' and Departamento de Matem\'atica, Facultad de Ciencias Exactas, Universidad de Buenos Aires, Intendente Guiraldes 2160, Buenos Aires 1428, Argentina, \texttt{aceretani@dm.uba.ar}}
\and
Carlos N. Rautenberg\footnotemark[1]
}
\begin{document}  

\maketitle

\begin{abstract}
We study a Boussinesq system in a bounded domain with an outlet boundary
portion where fluid can leave or re-enter. On this boundary part, we consider
a do-nothing condition for the fluid flow, and a new artificial condition
for the heat transfer that couples nonlinearly the fluid velocity and temperature.
The latter can be further adjusted if convective or conductive phenomena
are dominant. We prove existence and, in some cases, uniqueness of weak
solutions to stationary and evolutionary problems by a fixed point strategy
under suitable assumptions on the data. A variety of numerical tests shows
the improved performance of the new artificial condition with respect to
other standard choices in the literature.
\end{abstract}

\begin{keywords}
Boussinesq system, artificial boundary conditions, mixed boundary conditions, do-nothing boundary condition, weak solutions.
\end{keywords}

\begin{AMS}
34A34, 
34B15, 
76D05, 
80A20. 
\end{AMS}

\section{Introduction}
In this work, we consider a problem for non-isothermal incompressible Newtonian
fluids in a domain with an outlet where the fluid is allowed to leave.
In particular, we consider a recently introduced artificial nonlinear boundary
condition, study the stationary and time evolutionary versions of the problem,
and provide numerical tests that show more accurate performance than other
standard choices. In order to motivate the issue, consider the problem
to describe the temperature, velocity, and pressure of the air in a room
with a heating device on some of its walls and a door or window which is
always open. This may be a subtask of a larger problem where we aim to
achieve both comfort and efficiency of a heating system. Fig.\ref{Room} shows a possible configuration for the above situation and illustrates
the effects of buoyancy on the fluid velocity due to temperature changes.
\begin{figure}[h]
\centering
		\includegraphics[scale=0.3]{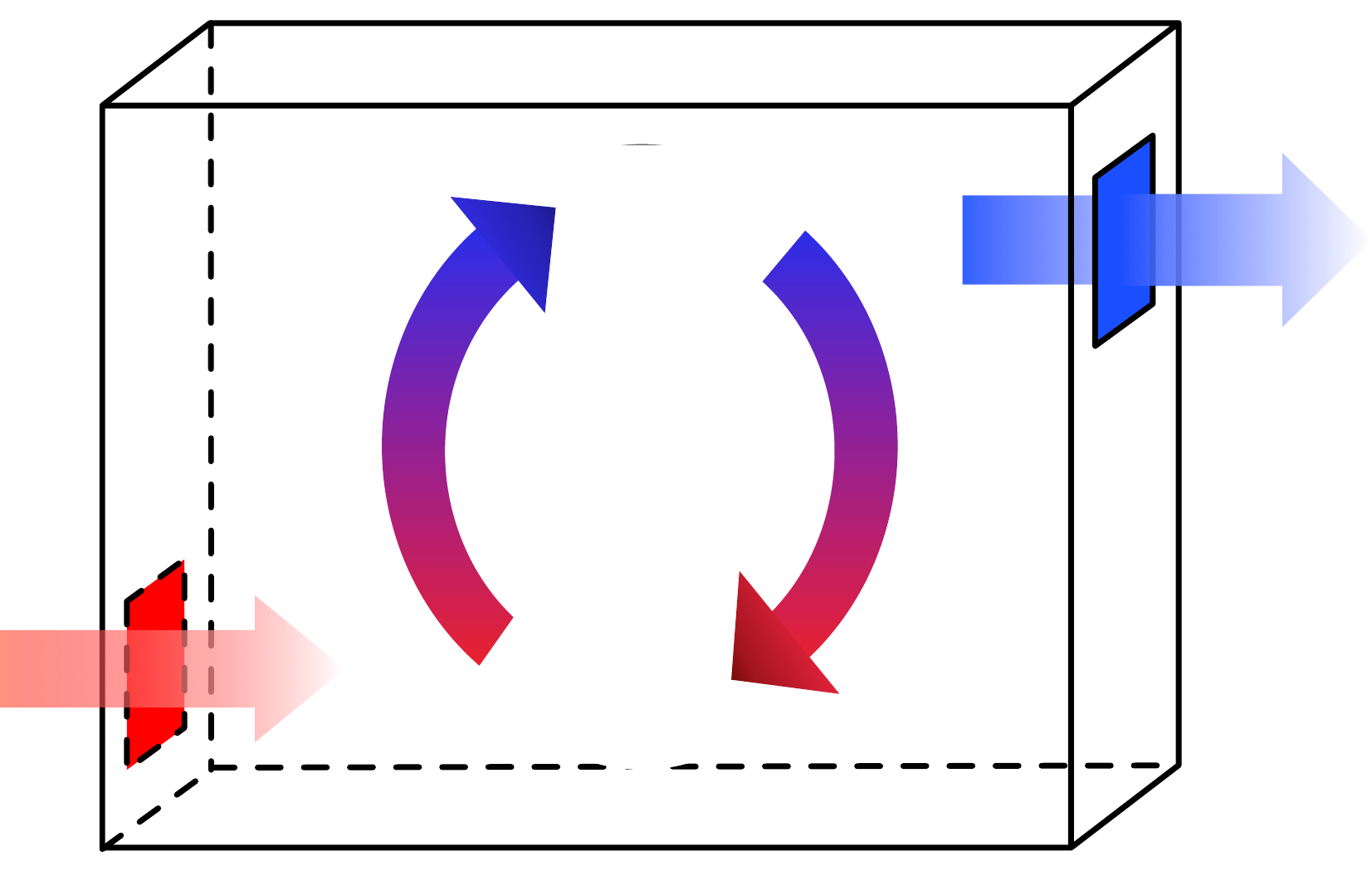}
\caption{\small\small Recirculating flow in a room with a heated inlet flow on the left wall and an aperture on the opposite one.}\label{Room}
\end{figure}
A common approach to treat this problem numerically is to restrict the
computational domain to the room itself and to include the effects of the
surrounding environment through an appropriate selection of boundary conditions.
While this is quite simple to do on the parts of the boundary that represent
insulated walls or a heating device, it is a major problem on the open
door. Particularly, the question that arises is: What conditions on the
velocity and the temperature of the fluid would be appropriate at the place on the boundary
where the fluid is allowed to flow freely? A good perspective of such a
question is given in the words of Gresho and Sani \cite{SaGr1994} when
referring to boundary conditions:
\vspace{.1cm}
\begin{adjustwidth}{2.5cm}{2.5cm}
\begin{center}``\emph{Nature is usually silent, or in fact perverse, in not communicating the appropriate ones.}'' 	
\end{center}
\end{adjustwidth}
\vspace{.1cm}

The problem of selecting artificial boundary conditions for isothermal
incompressible Newtonian fluids took impulse in the early 1990s from the
work \cite{Gr1991} of Gresho (see also \cite{HeRaTu1996,KuSa1998}), and
its high level of difficulty was noticed from the beginning. In
\cite{SaGr1994}, Gresho and Sani collected a set of benchmark problems
treated with different artificial boundary conditions with the aim of finding
appropriate ones. In words of the authors, this goal was not met, and since
then many other works have been made guided by the same target (see, e.g.,
the surveys in \cite{DoKaCh2014} or \cite{KrNe2018}). However, the artificial
boundary condition proposed by Gresho in his pioneering article
\cite{Gr1991} became quite popular. It is known as the ``do-nothing condition''
and, in dimensionless form, reads as follows:
%
\begin{equation}
\label{DN}
-p\,{\mathbf{n}}+\frac{1}{\mathrm{Re}}\,
\frac{\partial {\mathbf{v}}}{\partial {\mathbf{n}}}={\mathbf{h}}\qquad \qquad
\text{on}\quad \Gamma _{\mathrm{o}},
\end{equation}
where $p$ and ${\mathbf{v}}$ are the pressure and the velocity of the fluid,
respectively, $\mathrm{Re}$ is the Reynolds number, ${\mathbf{n}}$ is the unit
outer normal to the open boundary $\Gamma _{\mathrm{o}}$, and
${\mathbf{h}}$ is a given function. The success of the do-nothing condition  {\eqref{DN}} is probably due to its relative easy implementation for numerical
methods based on variational formulations. Furthermore, even if the origin
of the do-nothing condition relies on variational principles (see, e.g.,
\cite{KrNe2018}), it is verified by Poiseuille flows in circular pipes,
and thus it is physically plausible. The do-nothing condition  {\eqref{DN}} has been widely used in the last decade. Successful and insightful
approaches can be found in the works of Bene\v{s}, Ku\v{c}era and collaborators
\cite{Be2007,BeKu2007,Be2009,Ku2009,Be2011-a,BeKu2012-a,Be2011-b,BeKu2012-b,Be2013-a,Be2013-b,Be2014,BeTi2015,BeKu2016,BePa2017}.
Nevertheless, the artificial condition  {\eqref{DN}} has two well-known drawbacks:
One of them is that it cannot prevent an unbounded growth of the kinetic
energy of the system since it allows the fluid to leave and re-enter the
truncated domain at unrestricted rates. As a consequence, a priori energy
estimates (and hence, existence of weak solutions) can be obtained only
for ``small'' data. The other one concerns its numerical instability. A
succinct discussion on both subjects can be found in \cite{BrMu2014} (see
also \cite{BeEtAl2018}). We finally mention that some ad-hoc changes on
the model proposed by Gresho have proven suitable to overcome some drawbacks
of the do-nothing condition. Successful approaches have been obtained by
adding an extra term to the left-hand-side of  {\eqref{DN}} (see
\cite{BrFa1996,FeNe2004,FeNe2006,PeThBlCr2008-a,PeThBlCr2008-b,FeNe2013,BrMu2014,Ne2015,Ne2016,Ne2017,CeRa2019})
as well as by adding an extra condition on the open boundary (see
\cite{KrNe1994,KrNe2001,KrNe2018}). Other artificial boundary conditions
for isothermal fluids can be found in \cite{DoKaCh2014} and the references
therein.

The analogous problem concerning heat-conducting fluids has received less
attention. Some authors have used a do-nothing condition for the velocity
field together with a Neumann condition for the fluid temperature, see
for example
\cite{Be2011-b,BeKu2012-b,Be2013-b,Be2014,BeTi2015,BePa2017,PeThBlCr2008-a,PeThBlCr2008-b}.
In other cases, the two ad-hoc changes on Gresho's model mentioned
before, together with an artificial boundary condition that couples the
temperature and the velocity of the fluid were used; see
\cite{PeThBlCr2008-a,PeThBlCr2008-b,Ne2017,CeRa2019}. In
\cite{CeRa2019}, the authors introduced the condition
%
\begin{equation}
\label{HT}
\frac{1}{\mathrm{Re}\,\mathrm{Pr}}\,
\frac{\partial u}{\partial {\mathbf{n}}}- u\,\beta ({\mathbf{v}}\cdot {\mathbf{n}})({
\mathbf{v}}\cdot {\mathbf{n}})=h\qquad \text{on}\quad \Gamma _{\mathrm{o}},
\end{equation}
where $h$ and $\beta $ are given, $u$ is the temperature of the fluid,
and $\mathrm{Pr}$ is the Prandtl number. The function
$\beta :\mathbb{R}\to \mathbb{R}$ was originally conceived bounded, continuous
everywhere except maybe at the origin, and such that
$\beta (s)\in [0,1/2]$ if $s\geq 0$, and $\beta (s)\in [1/2,1]$ otherwise. The boundary condition  {\eqref{HT}} arises consequently from the study of heat
conduction at the outlet and through an appropriate selection of functions
$h$ and $\beta $: For example, the homogeneous Neumann condition ($h
\equiv 0$ and $\beta \equiv 0$) may be suitable if \emph{advection} dominates
the heat transfer process at the open boundary. However, if
\emph{conduction} is not negligible, a complex heat transfer process takes
place at the open boundary; for example, the outside temperature can not
be longer considered impervious to the domain effects, and might be transported
into the domain by incoming flows. These two mutually dependent processes
contribute to the heat transfer at the outlet and the function
$\beta \not\equiv 0$ aims to capture this feature. The bounds of
$s\mapsto \beta (s)$, for $s<0$ and $s>0$, directly affect the influence
of incoming and outgoing flows, respectively, on the heat transfer at the
open boundary; see \cite{CeRa2019}. We refer to the numerical and experimental
works on open heated cavities reported by Chan and Tien in
\cite{ChTi1985,ChTi1986} for insights on complex phenomena taking place
at the open boundary. The coupled condition used in
\cite{PeThBlCr2008-a,PeThBlCr2008-b,Ne2017} is a special case of  {\eqref{HT}} corresponding to a function $\beta $ given by
$\beta (s)=c/2$ if $s<0$ and $\beta (s)=0$ otherwise, where $c>0$ is
constant. To the best of our knowledge, there are no other approaches for
dealing with artificial boundary conditions for non-isothermal fluids in
open domains.

In this work we study stationary and evolutionary Boussinesq equations
with mixed boundary conditions in a domain with an open boundary. The physical
problem in focus throughout the article is related to the example given
above: Describing the velocity field, the temperature, and the pressure
distributions of a fluid inside a room with a heating device located on
its walls (e.g., a radiator and/or a vent of a heating system) and an open
window or door. At the open boundary, we assume that the velocity, the
temperature, and the pressure of the fluid satisfy the do-nothing condition  {\eqref{DN}} and the coupled condition  {\eqref{HT}}. In particular, we remove the conditions
on $\beta $ associated with the sign of its argument, so that Neumann conditions
are included in our study. For simplicity, we consider
${\mathbf{h}}\equiv 0$ and $h\equiv 0$ in  {\eqref{DN}} and  {\eqref{HT}}, respectively.
Existence of solutions for the stationary problem is obtained via two approaches
and under a ``small'' data assumption. In particular, uniqueness of solutions
is attained if $\beta $ is considered to be Lipschitz continuous, while
existence of solutions is retained for $\beta $ continuous, bounded, and
possibly discontinuous at the origin. The time evolutionary problem is
significantly more complex. In fact, existence and uniqueness of solutions
can be obtained as well for two-dimensional problems provided
$\beta $ is Lipschitz continuous. While on the surface, the approach seems
analogous to the stationary case, the entire mathematical machinery only
works by the careful selection of possible state spaces. The latter efforts
lead additionally to an improved regularity result.

The outline of the article is as follows. In section~\ref{Stationary} we
consider the stationary problem, the weak formulation and the existence
results are provided in sections~\ref{sec:weakformuexis} and \ref{Stationary-WeakSolutions}, respectively. The latter contains two completely
different approaches for existence, while in section~\ref{sec:beta-Lipschitz} we consider a Lipschitz continuous function
$\beta $, in section~\ref{sec:beta-A1} existence is given for
$\beta $ bounded and continuous with the possible exception of the origin.
The evolutionary problem is considered from section~\ref{sec:evo} on, its
weak formulation is provided in \ref{sec:weakformu-evo}, and the existence
and uniqueness result is given in \ref{sec:weakformuexis-evo}. The paper
finalizes with a series of numerical tests in section~\ref{Section:numerics} showing the increased accuracy of the boundary condition
 {(\ref{HT})} with respect to other standard choices.

\section{The stationary problem}\label{Stationary}
Let $\Omega \subset \mathbb{R}^{d}$ be a simply connected bounded domain
with Lipschitz boundary $\Gamma :=\partial \Omega $, where $d=2$ or
$d=3$. In addition, let
$\Gamma _{\mathrm{i}}, \Gamma _{\mathrm{w}}, \Gamma _{\mathrm{d}},
\Gamma _{\mathrm{n}}, \Gamma _{\mathrm{o}}$ be relative open subsets of
$\Gamma $ with positive $(d-1)$-Lebesgue measure that satisfy
\begin{align*}
&\Gamma =\overline{\Gamma _{\mathrm{i}}}\cup
\overline{\Gamma _{\mathrm{w}}} \cup
\overline{\Gamma _{\mathrm{o}}}, & \Gamma _{\mathrm{i}}\cap \Gamma _{\mathrm{w}}=\emptyset , & & \Gamma _{\mathrm{i}}\cap \Gamma _{\mathrm{o}}=\emptyset , & & \Gamma _{\mathrm{o}}\cap \Gamma _{\mathrm{w}}=\emptyset ,
\\
&\Gamma =\overline{\Gamma _{\mathrm{d}}}\cup
\overline{\Gamma _{\mathrm{n}}} \cup
\overline{\Gamma _{\mathrm{o}}},& \Gamma _{\mathrm{d}}
\cap \Gamma _{\mathrm{n}}=\emptyset ,& & \Gamma _{\mathrm{d}}\cap
\Gamma _{\mathrm{o}}=\emptyset ,& & \Gamma _{\mathrm{n}}\cap \Gamma _{\mathrm{o}}=\emptyset .
\end{align*}
For the reference problem in which $\overline{\Omega }$ represents a room
with a heating device on some of its walls and an open window or door,
the decompositions
$\left \{  \Gamma _{\mathrm{i}}, \Gamma _{\mathrm{w}}, \Gamma _{\mathrm{o}}\right \}  $ and
$\left \{  \Gamma _{\mathrm{d}}, \Gamma _{\mathrm{n}}, \Gamma _{\mathrm{o}}\right \}  $ are related to the behavior on the boundary of the
velocity and the temperature of the fluid, respectively; see  {Fig.~\ref{BoundarySubparts}}. The boundary partitions represent the following
phenomena and elements:
\begin{itemize}
\item[--] $\Gamma _{\mathrm{o}}$. An open boundary, e.g., an open window
or door, where fluid is allowed to leave the domain.
\item[--] $\Gamma _{\mathrm{i}}$. The inlet location where fluid is allowed
to enter the domain, e.g., a vent of a heating/cooling system.
\item[--] $\Gamma _{\mathrm{w}}$. Parts of the boundary where fluid does
not enter nor leave the domain and further does not slip, e.g., walls or
floors.
\item[--] $\Gamma _{\mathrm{d}}$. Heated regions on walls or floor, e.g.,
a radiating floor, or inlets where the temperature for the fluid is prescribed.
\item[--] $\Gamma _{\mathrm{n}}$. Thermally insulated parts of the boundary.
\end{itemize}

Then, for example, we may have
$\Gamma _{\mathrm{i}}\subset \Gamma _{\mathrm{d}}$ and
$\Gamma _{\mathrm{n}}\equiv \Gamma _{\mathrm{w}}$. According to this reference
problem, the subscripts ``$\mathrm{i}$'', ``$\mathrm{w}$'', and
``$\mathrm{o}$'' refer to the \emph{inlet}, the \emph{walls}, and the
\emph{open} part of the room, and the subscripts ``$\mathrm{d}$'' and
``$\mathrm{n}$'' refer to the parts of the boundary on which
\emph{Dirichlet} and \emph{Neumann} type conditions shall be respectively
imposed. Furthermore, from now on and throughout the paper,
${\mathbf{v}}$, $u$, and $p$ denote the velocity, the temperature, and the pressure
of the fluid, respectively.
\begin{figure}[h!]
\centering
	\begin{subfigure}{0.42\textwidth}
		\includegraphics[width=\textwidth]{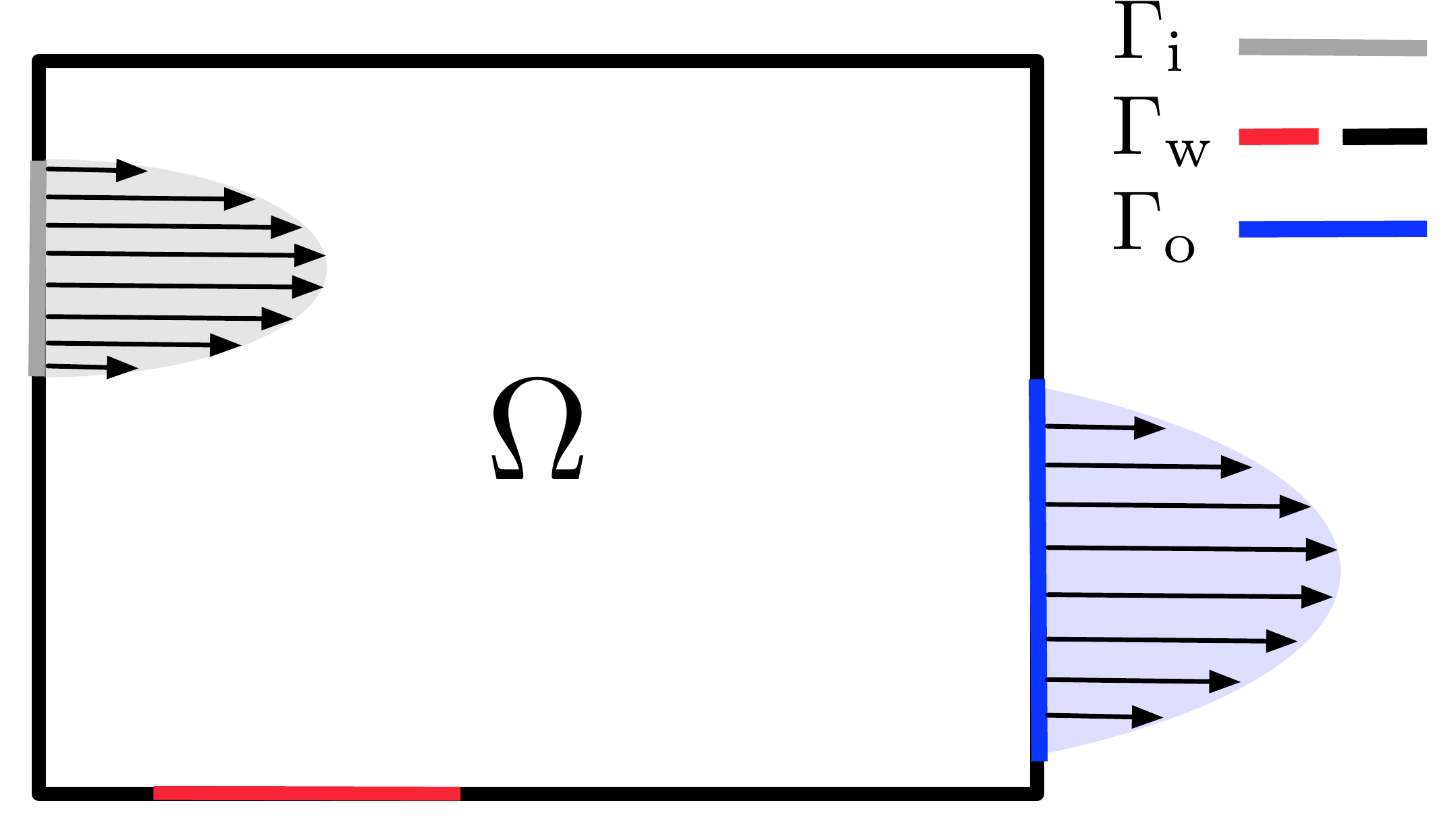}
		\caption{\small Boundary subparts $\Gamma_\ii$, $\Gamma_\ww$, $\Gamma_\oo$.}
	\end{subfigure}\hspace*{1.5	cm}
		\begin{subfigure}{0.44\textwidth}
		\includegraphics[width=\textwidth]{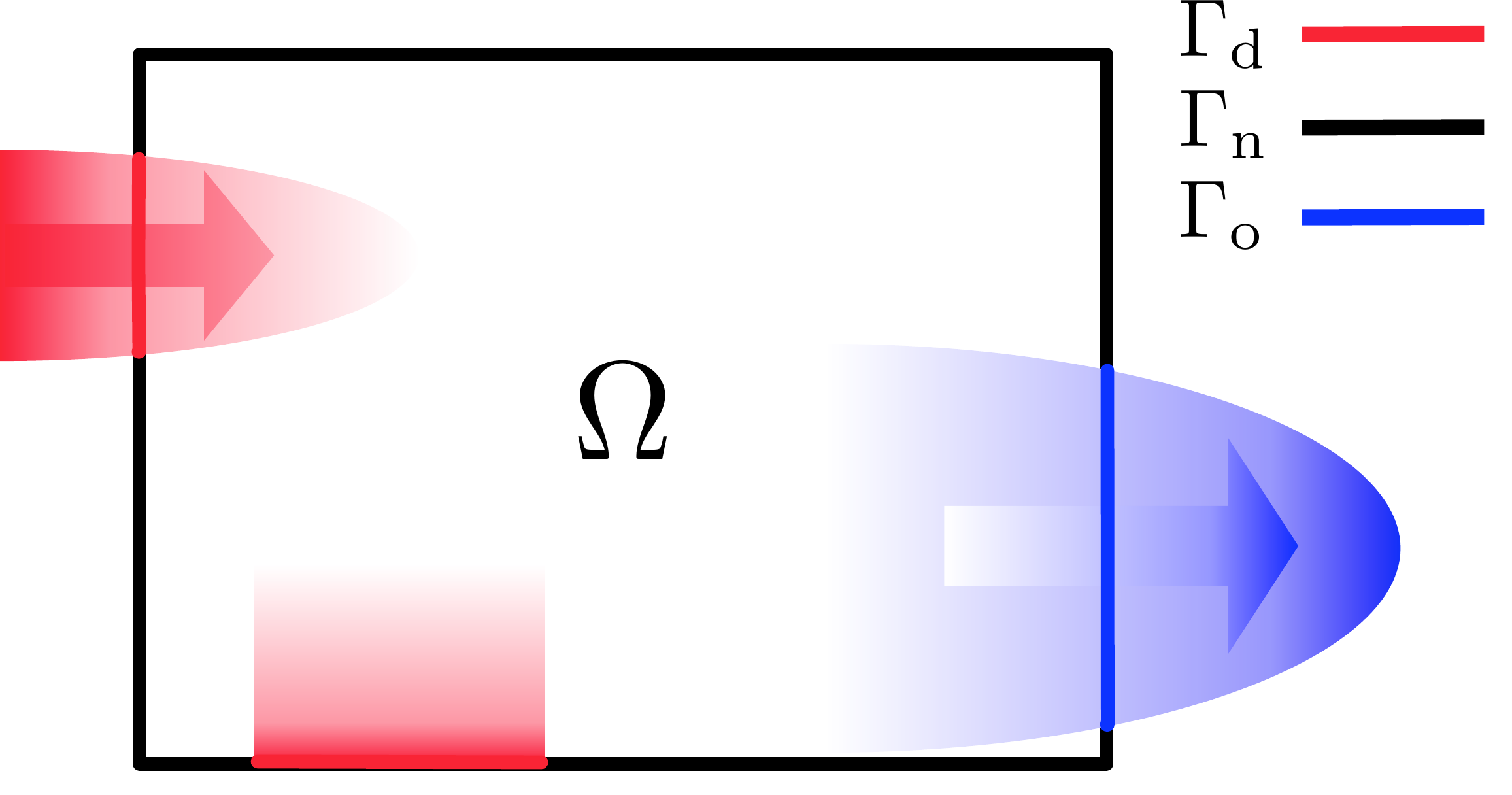}
		\caption{\small Boundary subparts $\Gamma_\dd$, $\Gamma_\nn$, $\Gamma_\oo$.}
	\end{subfigure}
	\caption{\small Possible configuration for 2d-decompositions $\{\Gamma_\ii,\Gamma_\dd,\Gamma_\oo\}$ and $\{\Gamma_\dd,\Gamma_\nn,\Gamma_\oo\}$.}\label{BoundarySubparts}
\end{figure}

The stationary problem of interest is the following.

\bigskip\noindent
\textbf{Problem $(\mathbb{P})$}: \emph{Find
${\mathbf{v}}:\Omega \to \mathbb{R}^{d}$, $u:\Omega \to \mathbb{R}$, and
$p:\Omega \to \mathbb{R}$ that satisfy the stationary Boussinesq equations
in $\Omega $},
%
\begin{align}%
\label{BoussinesqStationary-1}
{\mathbf{v}}\cdot \nabla {\mathbf{v}}-\frac{1}{\mathrm{Re}}\Delta {\mathbf{v}}+
\nabla p=\,&\frac{\mathrm{Gr}}{\mathrm{Re}^{2}}u\,{\mathbf{e}}+{\mathbf{g}}_{1},
\\
\mathrm{div}\,{{\mathbf{v}}}=\,&0,
\\
\label{BoussinesqStationary-3}
{\mathbf{v}}\cdot \nabla u-\frac{1}{\mathrm{Re}\,\mathrm{Pr}}\Delta u=\,&g_{2},
\end{align}
\emph{subject to the boundary conditions}
%
\begin{align}
\label{BoundaryConditionsStationary}
{\mathbf{v}}&={\mathbf{v}}_{\mathrm{i}}\quad \,\text{\emph{on}}\quad \Gamma _{\mathrm{i}},&\qquad {
\mathbf{v}}&=0\quad \,\,\,\text{\emph{on}}\quad \Gamma _{\mathrm{w}},
\\
\frac{\partial u}{\partial {\mathbf{n}}}&=0\quad \,\,\,\,\text{\emph{on}}\quad
\Gamma _{\mathrm{n}},&\qquad \,\, u&=u_{\mathrm{d}}\quad \text{\emph{on}}\quad
\Gamma _{\mathrm{d}},\qquad
\\
\frac{1}{\mathrm{Re}}\,\frac{\partial {\mathbf{v}}}{\partial {\mathbf{n}}}&=p\,{
\mathbf{n}}\quad \text{\emph{on}}\quad \Gamma _{\mathrm{o}},&\qquad
\frac{1}{\mathrm{Re}\,\mathrm{Pr}}\,
\frac{\partial u}{\partial {\mathbf{n}}}&= u\,\beta ({\mathbf{v}}\cdot {\mathbf{n}})\,({
\mathbf{v}}\cdot {\mathbf{n}})\quad \text{\emph{on}}\quad \Gamma _{\mathrm{o}}.
\end{align}
Here, ${\mathbf{g}}_{1}:\Omega \to \mathbb{R}^{d}$ and
$g_{2}:\Omega \to \mathbb{R}$ represent an external force/perturbation
and the intensity of external sources of heat, respectively. The non-negative
constants $\mathrm{Re}$, $\mathrm{Pr}$, $\mathrm{Gr}$ are the Reynolds,
Prandtl, and Grashof numbers. The vector ${\mathbf{e}}$ represents the opposite
direction of the gravitational acceleration, i.e., ${\mathbf{e}}:=(0,1)$ if
$d=2$, or ${\mathbf{e}}:=(0,0,1)$ if $d=3$. The function
${\mathbf{v}}_{\mathrm{i}}:\Gamma _{\mathrm{i}}\to \mathbb{R}^{d}$ is the velocity
profile at the inlet, and
$u_{\mathrm{d}}:\Gamma _{\mathrm{d}}\to \mathbb{R}$, the temperature distribution
at the inlet and/or the heated part of the boundary. The vector
${\mathbf{n}}$ is the outer unit normal to the boundary $\Gamma $, and
$\beta :\mathbb{R}\to \mathbb{R}$ is a given function that satisfies that
\begin{enumerate}
\item[(A1)] $\beta $ is bounded and continuous everywhere except maybe
at the origin.
\end{enumerate}
In just a few words, $\beta $ is a function associated to the thermal conduction
phenomena at the outlet. In particular, for slow moving fluids,
$\beta $ allows to account for heat contributions outside the domain. We
refer to \cite{Gr1991} and \cite{CeRa2019} for details about the boundary
conditions on $\Gamma _{\mathrm{o}}$.

\subsection{Weak formulation}\label{sec:weakformuexis}
In this section we establish the weak formulation of the stationary problem
together with the function spaces of interest and generalities about involved
operators. Let
\begin{align*}
&E_{1}(\Omega ):=\left \{  {\mathbf{y}}\in C^{\infty }(\overline{\Omega })^{d}
\,:\, \mathrm{div}\,{\mathbf{y}}=0,\, \overline{\mathrm{supp}\,{\mathbf{y}}}
\cap (\Gamma _{\mathrm{i}}\cup \Gamma _{\mathrm{w}})=\emptyset \right
\}  ,
\\
&E_{2}(\Omega ):=\left \{  y\in C^{\infty }(\overline{\Omega })\,:\,
\overline{\mathrm{supp}\,y}\cap \Gamma _{\mathrm{d}}=\emptyset
\right \}  ,
\end{align*}
where $C^{\infty }(\overline{\Omega })$ is the set of restrictions to
$\overline{\Omega }$ of the real-valued infinitely differentiable functions
in $\mathbb{R}^{d}$. We define the spaces
\begin{equation*}
\begin{split} V_{1}:&\quad
\text{closure of $E_{1}(\Omega )$ with respect to the norm }\|\cdot \|_{W^{1,2}(
\Omega )^{d}},
\\
V_{2}:&\quad
\text{closure of $E_{2}(\Omega )$ with respect to the norm }\|\cdot \|_{W^{1,2}(
\Omega )},
\\
H_{1}:&\quad
\text{closure of $E_{1}(\Omega )$ with respect to the norm }\|\cdot \|_{L^{2}(
\Omega )^{d}},
\\
H_{2}:&\quad
\text{closure of $E_{2}(\Omega )$ with respect to the norm }\|\cdot \|_{L^{2}(
\Omega )}.
\end{split}
\end{equation*}
Notice that $\|\nabla \cdot \|_{L^{2}(\Omega )^{d\times d}}$ is equivalent
to $\|\cdot \|_{W^{1,2}(\Omega )^{d}}$ in $V_{1}$ and, analogously,
$\|\nabla \cdot \|_{L^{2}(\Omega )^{d}}$ is equivalent to
$\|\cdot \|_{W^{1,2}(\Omega )}$ in $V_{2}$. From now on, we consider
$V_{1}$ and $V_{2}$ endowed with the norms
\begin{equation*}
\|\cdot \|_{V_{1}}:=\|\nabla \cdot \|_{L^{2}(\Omega )^{d\times d}},
\qquad \|\cdot \|_{V_{2}}:=\|\nabla \cdot \|_{L^{2}(\Omega )^{d}}.
\end{equation*}
In addition, we identify the Hilbert spaces $H_{1}$ and $H_{2}$ with their
topological duals $H_{1}'$ and $H_{2}'$ by the Riesz map. Observe that
the embedding $V_{i}\hookrightarrow H_{i}$ is dense and continuous, and
hence $H_{i}\equiv H_{i}'\hookrightarrow V'_{i}$ for $i=1,2$ is as
well; see \cite{GaPa2006}.

We assume that
\begin{enumerate}[(A2)]
\label{cond:A2}
\item[$\mathrm{(A2)}$] $\Omega $ belongs to the domain class
$\tilde{C}\,^{0,1}$ and the boundary parts $\Gamma _{\mathrm{d}}$ and
$\Gamma _{\mathrm{i}}\cup \Gamma _{\mathrm{w}}$ consist of a finite number
of relative open sets in $\Gamma $, which satisfy a projective boundary
Lipschitz regularity condition if $d=3$ (see \cite{DoPa2006}),
\end{enumerate}
where $\tilde{C}\,^{0,1}$ is the subset of the Lipschitz domain class
$C\,^{0,1}$ formed by domains with a Lipschitz boundary consisting of a
finite number of smooth parts with a finite number of relative maxima,
minima, and inflexion points, and in three dimensions, also a finite number
of saddle points (see \cite{DoPa2006}). Hence, the spaces $V_{1}$ and
$V_{2}$ admit the characterizations
\begin{equation*}
\begin{split} &V_{1}=\left \{  {\mathbf{y}}\in W^{1,2}(\Omega )^{d}\,:\,
\mathrm{div}\,{\mathbf{y}}=0\text{ in }\Omega ,\, \left .{\mathbf{y}}\right |_{
\Gamma _{\mathrm{i}}\cup \Gamma _{\mathrm{w}}}=0
\text{ in the trace sense}\right \}  ,
\\
&V_{2}=\left \{  y\in W^{1,2}(\Omega )\,:\, \left .y\right |_{\Gamma _{\mathrm{d}}}=0\text{ in the trace sense}\right \}  .
\end{split}
\end{equation*}
The equivalence between the two definitions of $V_{2}$ was proved in
\cite{DoPa2006}. The analogous result for $V_{1}$ is obtained similarly.

We further assume that
\begin{enumerate}
\item[$\mathrm{(A3)}$]
${\mathbf{v}}_{\mathrm{i}}\in W^{1/2,2}(\Gamma _{\mathrm{i}})^{d}$ \text{and} $u_{\mathrm{d}}\in W^{1/2,2}(\Gamma _{\mathrm{d}})$.
\end{enumerate}

This allows us to look for weak solutions to problem $(\mathbb{P})$ of
the form
\begin{equation*}
{\mathbf{v}}=\bar {\mathbf{v}}+{\mathbf{V}},\qquad u=\bar {u}+U,
\end{equation*}
with $\bar {\mathbf{v}}\in V_{1}$ and $\bar {u}\in V_{2}$, where
${\mathbf{V}}\in W^{1,2}(\Omega )^{d}$ and $U\in W^{1,2}(\Omega )$ satisfy
\begin{equation*}
\begin{split} {\mathbf{V}}=\,&{\mathbf{v}}_{\mathrm{i}}\quad \text{on}\quad
\Gamma _{\mathrm{i}},\qquad {\mathbf{V}}=0\quad \text{on}\quad \Gamma _{\mathrm{w}},\qquad \mathrm{div}\,{\mathbf{V}}=0\quad \text{in}\quad
\Omega ,\qquad U=\,u_{\mathrm{d}}\quad \text{on }\Gamma _{\mathrm{d}},
\end{split}
\end{equation*}
and the estimates
%
\begin{equation}
\label{UV-Estimates}
\|{\mathbf{V}}\|_{W^{1,2}(\Omega )^{d}}\leq C\,\|{\mathbf{v}}_{\mathrm{i}}\|_{W^{1/2,2}(
\Gamma _{\mathrm{i}})^{d}},\qquad \|U\|_{W^{1,2}(\Omega )}\leq C\,\|u_{\mathrm{d}}\|_{W^{1/2,2}(\Gamma _{\mathrm{d}})}
\end{equation}
(see, e.g., \cite{Ga2011}). Here and in what follows, $C$ denotes a positive
constant taking various non-essential values that may depend on the domain
$\Omega $, the dimension $d$, and the sets involved in the decompositions
$\{\Gamma _{\mathrm{i}},\Gamma _{\mathrm{w}},\Gamma _{\mathrm{o}}\}$ or
$\{\Gamma _{\mathrm{d}},\Gamma _{\mathrm{n}},\Gamma _{\mathrm{o}}\}$.

We shall study a weak formulation of $(\mathbb{P})$ that is obtained from
the usual approach: We formally multiply equations  {\eqref{BoussinesqStationary-1}} and  {\eqref{BoussinesqStationary-3}} by test
functions in $V_{1}$ and $V_{2}$, respectively. Then, we integrate over
$\Omega $, and finally use the boundary conditions on
$\Gamma _{\mathrm{n}}$ and $\Gamma _{\mathrm{o}}$ to rewrite the integrals
with second order derivatives using Green's theorem. In order to introduce
the weak formulation, we introduce the operators below. In particular,
the estimations given for each of them will be useful to prove the existence
of weak solutions in the next section~\ref{Stationary-WeakSolutions}. In
what follows, $(\cdot ,\cdot )_{2}$ denotes the usual inner product in
either $L^{2}(\Omega )$ or in any finite copies of it.

Let ${\mathbf{w}}_{1},{\mathbf{w}}_{2},{\mathbf{w}}\in W^{1,2}(\Omega )^{d}$ and
$w\in W^{1,2}(\Omega )$. We define
$B_{1}({\mathbf{w}}_{1},{\mathbf{w}}_{2})\in V_{1}'$ and
$B_{2}({\mathbf{w}},w)\in V_{2}'$ by
%
\begin{equation}
\label{B}
\langle B_{1}({\mathbf{w}}_{1},{\mathbf{w}}_{2}),{\mathbf{y}}\rangle :=\int _{\Omega }({
\mathbf{w}}_{1}\cdot \nabla {\mathbf{w}}_{2})\cdot {\mathbf{y}}\,\mathrm{d}x\quad
\text{and}\quad \langle B_{2}({\mathbf{w}},w),y\rangle :=\int _{\Omega }({
\mathbf{w}}\cdot \nabla w)y\,\mathrm{d}x,
\end{equation}
for all $\mathbf{y}\in V_1$ and $y\in V_2$.

By H\"{o}lder's inequality and the embedding
$W^{1,2}(\Omega )\hookrightarrow L^{4}(\Omega )$, we find that
%
\begin{align}
\label{B-L4}
|\langle B_{1}({\mathbf{w}}_{1},{\mathbf{w}}_{2}),{\mathbf{y}}\rangle |\leq \,&\|{
\mathbf{w}}_{1}\|_{L^{4}(\Omega )^{d}}\|\nabla {\mathbf{w}}_{2}\|_{L^{2}(
\Omega )^{d\times d}}\|{\mathbf{y}}\|_{L^{4}(\Omega )^{d}},
\\
|\langle B_{2}({\mathbf{w}},w),y\rangle |\leq \,&\|{\mathbf{w}}\|_{L^{4}(
\Omega )^{d}}\|\nabla w\|_{L^{2}(\Omega )^{d}}\|y\|_{L^{4}(\Omega )}.
\end{align}
Hence, we observe the estimates
%
\begin{align}
\label{B1-W12}
\|B_{1}({\mathbf{w}}_{1},{\mathbf{w}}_{2})\|_{V_{1}'}\leq \,&C\,\|{\mathbf{w}}_{1}\|_{W^{1,2}(
\Omega )^{d}}\|{\mathbf{w}}_{2}\|_{W^{1,2}(\Omega )^{d}},
\\
\label{B2-W12}
\|B_{2}({\mathbf{w}},w)\|_{V_{2}'}\leq \,&C\,\|{\mathbf{w}}\|_{W^{1,2}(\Omega )^{d}}
\|w\|_{W^{1,2}(\Omega )}.
\end{align}
In addition, we define $H({\mathbf{w}},w)$ by
%
\begin{equation}
\label{H}
\langle H({\mathbf{w}},w),y\rangle :=\int _{\Gamma _{\mathrm{o}}}w\beta ({
\mathbf{w}}\cdot {\mathbf{n}})({\mathbf{w}}\cdot {\mathbf{n}})y\,\mathrm{d}\sigma
\end{equation}
for $y\in V_{2}$. Notice that H\"{o}lder's inequality and the embedding
$W^{1,2}(\Omega )\hookrightarrow L^{4}(\Gamma _{\mathrm{o}})$ yield
%
\begin{equation}
\label{H-L4}
|\langle H({\mathbf{w}},w),y\rangle |\leq \|\beta \|_{L^{\infty }(\mathbb{R})}
\|{\mathbf{w}}\|_{L^{4}(\Gamma _{\mathrm{o}})^{d}}\|w\|_{L^{4}(\Gamma _{\mathrm{o}})}\|y\|_{L^{2}(\Gamma _{\mathrm{o}})}.
\end{equation}
Therefore, $H({\mathbf{w}},w)\in V_{2}'$ and satisfies
%
\begin{equation}
\label{H-W12}
\|H({\mathbf{w}},w)\|_{V_{2}'}\leq \,C\,\|\beta \|_{L^{\infty }(\mathbb{R})}
\|{\mathbf{w}}\|_{W^{1,2}(\Omega )^{d}}\|w\|_{W^{1,2}(\Omega )}.
\end{equation}
Finally, we define $A_{1}({\mathbf{w}},w)\in V_{1}'$ and
$A_{2}(w)\in V_{2}'$ by
%
\begin{align}
\label{A}
\langle A_{1}({\mathbf{w}},w),{\mathbf{y}}\rangle :=\,&\frac{1}{\mathrm{Re}}(
\nabla {\mathbf{w}},\nabla {\mathbf{y}})_{2}-
\frac{\mathrm{Gr}}{\mathrm{Re}^{2}}(w{\mathbf{e}},{\mathbf{y}})_{2},
\\
\langle A_{2}(w),y\rangle :=\,&\frac{1}{\mathrm{Re}\,\mathrm{Pr}}(
\nabla w,\nabla y)_{2},
\end{align}
and observe the estimates
%
\begin{align}
\label{A1}
\|A_{1}({\mathbf{w}},w)\|_{V_{1}'}\leq \,&C\left (\frac{1}{\mathrm{Re}}\|{
\mathbf{w}}\|_{W^{1,2}(\Omega )^{d}}+\frac{\mathrm{Gr}}{\mathrm{Re}^{2}}
\|w\|_{W^{1,2}(\Omega )}\right ),
\\
\label{A2}
\|A_{2}(w)\|_{V_{2}'}\leq \,&\frac{C}{\mathrm{Re}\,\mathrm{Pr}}\|w
\|_{W^{1,2}(\Omega )}.
\end{align}

We are now in a position to write the weak formulation of
$(\mathbb{P})$:

\bigskip\noindent
\textbf{Problem $(\mathbb{P}_{\mathrm{w}})$}: \emph{Find
$(\bar {\mathbf{v}},\bar {u})\in V_{1}\times V_{2}$ such that
\begin{align*}
A_{1}(\bar {\mathbf{v}}+{\mathbf{V}},\bar {u}+U)+B_{1}(\bar {\mathbf{v}}+{\mathbf{V}},
\bar {\mathbf{v}}+{\mathbf{V}})=&\,{\mathbf{g}}_{1}&\text{in}&\quad V_{1}',
\\
A_{2}(\bar {u}+U)+B_{2}(\bar {\mathbf{v}}+{\mathbf{V}},\bar {u}+U)-H(
\bar {\mathbf{v}}+{\mathbf{V}},\bar {u}+U)=&\,g_{2}&\text{in}&\quad V_{2}',
\end{align*}
where ${\mathbf{g}}_{1}\in V_{1}'$, $g_{2}\in V_{2}'$.}

\bigskip
A weak solution to $(\mathbb{P})$ is then defined as
$(\bar {\mathbf{v}}+{\mathbf{V}},\bar {u}+U)$ where $(\bar {\mathbf{v}},\bar {u})$ solves
$(\mathbb{P}_{\mathrm{w}})$.

\subsection{Existence of weak solutions}
\label{Stationary-WeakSolutions}

We provide two different proofs for the existence of solutions to
$(\mathbb{P}_{\mathrm{w}})$ associated to different assumptions on
$\beta $. Initially, we consider the case that $\beta $ is Lipschitz continuous
in  {Theorem~\ref{Stationary-Theorem1}}, where we prove also uniqueness of
the solution. The argument of the proof relies on a fixed point strategy
that uses Banach's fixed point theorem. Then, in the case when
$\beta $ is continuous (but not Lipschitz), we prove existence in  {Theorem~\ref{Stationary-Theorem2}} by a perturbation approach combined with Leray-Schauder's
fixed point theorem. Both theorems are proved under the assumption of ``small''
boundary data, small $\mathrm{Re}$ and $\mathrm{Gr}$ numbers, and (possibly)
large $\mathrm{Pr}$ in the sense given in  {Theorem~\ref{Stationary-Theorem1}}.

In the following, we consider the product space $V_{1}\times V_{2}$ endowed
with the norm
\begin{equation*}
\|({\mathbf{w}},w)\|_{V_{1}\times V_{2}}:=\|{\mathbf{w}}\|_{V_{1}}+\|w\|_{V_{2}},
\end{equation*}
and denote by $B_{\tau }$ the closed ball in $V_{1}\times V_{2}$ with center
at the origin and radius $\tau >0$.

\subsubsection{Existence and uniqueness for $\beta $ Lipschitz continuous}
\label{sec:beta-Lipschitz}

In this section, we provide an existence and uniqueness result built on
the assumption of smallness of the data $\mathbf{v}_{\mathrm{i}}$ and
$u_{\mathrm{d}}$, and of enough regularity of $\beta $. The proof is based
on re-writing the weak formulation as a fixed point equation and is given
next.

\begin{theorem}%
\label{Stationary-Theorem1}
Let $\beta :\mathbb{R}\to \mathbb{R}$ be Lipschitz continuous. Then, provided
that
\begin{equation*}
\|{\mathbf{v}}_{i}\|_{W^{1/2,2}(\Gamma _{\mathrm{i}})^{d}}\qquad\text{and}\qquad \|u_{\mathrm{d}}\|_{W^{1/2,2}(\Gamma _{\mathrm{d}})}
\end{equation*}
are sufficiently small, there exist
$\varepsilon _{1}, \varepsilon _{2}, \varepsilon _{3}>0$ such that if
%
\begin{equation}
\label{Stationary-RePrGr}
\mathrm{Re}\in (0,\varepsilon _{1}),\qquad \mathrm{Pr}\in \left (0,
\frac{\varepsilon _{2}}{\mathrm{Re}}\right ),\qquad \mathrm{Gr}\in
\left (0,\varepsilon _{3}\,\mathrm{Re}\right ),
\end{equation}
there exists a unique solution $(\bar {\mathbf{v}},\bar {u})$ to problem
$(\mathbb{P}_{\mathrm{w}})$ in some closed ball $B_{\tau }$.
\end{theorem}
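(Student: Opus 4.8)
The plan is to recast $(\mathbb{P}_{\mathrm{w}})$ as a fixed-point equation in $V_1\times V_2$ by freezing the nonlinear and coupling terms on the right-hand side and inverting only the coercive diagonal principal part. Concretely, for a frozen pair $(\bar{\mathbf{z}},\bar{z})\in B_\tau$ I define $(\bar{\mathbf{v}},\bar{u})=T(\bar{\mathbf{z}},\bar{z})$ as the unique solution of the two decoupled linear problems
\begin{align*}
\tfrac{1}{\mathrm{Re}}(\nabla\bar{\mathbf{v}},\nabla\mathbf{y})_2
&=\langle\mathbf{g}_1,\mathbf{y}\rangle
+\tfrac{\mathrm{Gr}}{\mathrm{Re}^2}\bigl((\bar{z}+U)\mathbf{e},\mathbf{y}\bigr)_2
-\langle B_1(\bar{\mathbf{z}}+\mathbf{V},\bar{\mathbf{z}}+\mathbf{V}),\mathbf{y}\rangle
-\tfrac{1}{\mathrm{Re}}(\nabla\mathbf{V},\nabla\mathbf{y})_2,\\
\tfrac{1}{\mathrm{Re}\,\mathrm{Pr}}(\nabla\bar{u},\nabla y)_2
&=\langle g_2,y\rangle
-\langle B_2(\bar{\mathbf{z}}+\mathbf{V},\bar{z}+U),y\rangle
+\langle H(\bar{\mathbf{z}}+\mathbf{V},\bar{z}+U),y\rangle
-\tfrac{1}{\mathrm{Re}\,\mathrm{Pr}}(\nabla U,\nabla y)_2,
\end{align*}
for all $(\mathbf{y},y)\in V_1\times V_2$. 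Since $\|\cdot\|_{V_i}=\|\nabla\cdot\|_{L^2}$, each left-hand side is exactly a (scaled) inner product on $V_1$, respectively $V_2$, so the Riesz representation theorem makes $T\colon V_1\times V_2\to V_1\times V_2$ well defined, and a fixed point of $T$ is precisely a solution of $(\mathbb{P}_{\mathrm{w}})$. I would then verify the two hypotheses of Banach's fixed point theorem on a suitable ball $B_\tau$: that $T$ maps $B_\tau$ into itself and that $T$ is a contraction there.

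For the \textbf{self-mapping} step I test the first equation with $\mathbf{y}=\bar{\mathbf{v}}$ and the second with $y=\bar{u}$, and bound each right-hand side using \eqref{B1-W12}, \eqref{B2-W12}, \eqref{H-W12}, \eqref{A1}, \eqref{A2}, together with \eqref{UV-Estimates}. The coercivity constants $\tfrac{1}{\mathrm{Re}}$ and $\tfrac{1}{\mathrm{Re}\,\mathrm{Pr}}$ produce the crucial scalings: the buoyancy coupling survives with weight $\mathrm{Gr}/\mathrm{Re}$, the forcing terms carry weights $\mathrm{Re}$ and $\mathrm{Re}\,\mathrm{Pr}$, while the lifting contributions $\tfrac{1}{\mathrm{Re}}\|\mathbf{V}\|$ and $\tfrac{1}{\mathrm{Re}\,\mathrm{Pr}}\|U\|$ reduce to $\|\mathbf{V}\|_{W^{1,2}(\Omega)^d}$ and $\|U\|_{W^{1,2}(\Omega)}$. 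Writing $\delta:=\|\mathbf{V}\|_{W^{1,2}(\Omega)^d}+\|U\|_{W^{1,2}(\Omega)}$ (controlled by the small boundary data through \eqref{UV-Estimates}), I collect these into an estimate of the form $\|T(\bar{\mathbf{z}},\bar{z})\|_{V_1\times V_2}\le a+b\,\tau+c\,\tau^2$, where $a=O(\delta+\mathrm{Re}\|\mathbf{g}_1\|_{V_1'}+\mathrm{Re}\,\mathrm{Pr}\|g_2\|_{V_2'})$, the linear coefficient $b$ is dominated by $\mathrm{Gr}/\mathrm{Re}<\varepsilon_3$ and by $O(\delta)$, and $c=O\bigl(\mathrm{Re}+\mathrm{Re}\,\mathrm{Pr}(1+\|\beta\|_{L^\infty(\mathbb{R})})\bigr)$ from the quadratic terms $B_1$, $B_2$, $H$. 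Choosing $\varepsilon_1,\varepsilon_2,\varepsilon_3$ and the boundary data small makes $b<1$ and $a,c$ small, so the quadratic inequality $c\tau^2-(1-b)\tau+a\le0$ admits a root; its smaller root yields a radius $\tau$ with $T(B_\tau)\subseteq B_\tau$.

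For the \textbf{contraction} step I take $(\bar{\mathbf{z}}_1,\bar{z}_1),(\bar{\mathbf{z}}_2,\bar{z}_2)\in B_\tau$, subtract the defining equations, and test with the differences. The convective parts are handled by bilinearity, via $B_1(\mathbf{a},\mathbf{a})-B_1(\mathbf{b},\mathbf{b})=B_1(\mathbf{a}-\mathbf{b},\mathbf{a})+B_1(\mathbf{b},\mathbf{a}-\mathbf{b})$ and likewise for $B_2$, so each contributes the difference norm weighted by $O(\tau+\delta)$, while the buoyancy coupling contributes only $(\mathrm{Gr}/\mathrm{Re})\|\bar{z}_1-\bar{z}_2\|_{V_2}$. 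The essential and most delicate term is the boundary difference $H(\mathbf{w}_1,w_1)-H(\mathbf{w}_2,w_2)$, with $\mathbf{w}_i=\bar{\mathbf{z}}_i+\mathbf{V}$ and $w_i=\bar{z}_i+U$: writing $a=\mathbf{w}_1\cdot\mathbf{n}$, $b=\mathbf{w}_2\cdot\mathbf{n}$, I split the integrand $w_1\beta(a)a-w_2\beta(b)b$ into a temperature-difference part bounded by $\|\beta\|_{L^\infty(\mathbb{R})}$ and a part $w_2\bigl(\beta(a)a-\beta(b)b\bigr)$, and then use $\beta(a)a-\beta(b)b=\beta(a)(a-b)+(\beta(a)-\beta(b))b$ together with the \emph{Lipschitz} bound $|\beta(a)-\beta(b)|\le L|a-b|$ to obtain $|\beta(a)a-\beta(b)b|\le(\|\beta\|_{L^\infty(\mathbb{R})}+L|b|)\,|a-b|$. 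This is exactly where the Lipschitz hypothesis is indispensable and where boundedness gets upgraded to a genuine contraction, separating this theorem from the merely continuous case of Theorem~\ref{Stationary-Theorem2}. Estimating the resulting boundary integrals by Hölder's inequality (in the Lipschitz term a four-fold product) and the trace embedding $W^{1,2}(\Omega)\hookrightarrow L^4(\Gamma_{\mathrm{o}})$ gives a contraction constant of the form $C\bigl[\mathrm{Gr}/\mathrm{Re}+(1+\|\beta\|_{L^\infty(\mathbb{R})})(\tau+\delta)+L(\tau+\delta)^2\bigr]$, modulated by the $\mathrm{Re}$ and $\mathrm{Re}\,\mathrm{Pr}$ scalings from inverting the principal parts.

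Finally, shrinking $\varepsilon_1,\varepsilon_2,\varepsilon_3$ and the boundary data (hence $\delta$ and $\tau$) renders this constant strictly below $1$ simultaneously with the self-mapping property, and Banach's fixed point theorem delivers a unique $(\bar{\mathbf{v}},\bar{u})\in B_\tau$ solving $(\mathbb{P}_{\mathrm{w}})$. I expect the $H$-difference estimate to be the main obstacle, since it is where the cubic-type boundary nonlinearity must be reconciled with the Lipschitz structure of $\beta$ while keeping all factors inside the critical $L^4(\Gamma_{\mathrm{o}})$ trace space; the secondary difficulty is the simultaneous calibration of $\tau$ and the three smallness parameters so that the self-mapping and contraction inequalities hold at once.
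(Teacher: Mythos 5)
Your proposal is correct and follows essentially the same route as the paper: the same fixed-point map (freeze the nonlinearities and the lifting/buoyancy terms, invert the scaled principal parts via Riesz), the same self-mapping and contraction estimates with the $\mathrm{Re}$, $\mathrm{Re}\,\mathrm{Pr}$, and $\mathrm{Gr}/\mathrm{Re}$ scalings, and the same splitting of the $H$-difference into a temperature-difference part bounded by $\|\beta\|_{L^{\infty}(\mathbb{R})}$ and a velocity-difference part handled via $\beta(a)a-\beta(b)b=\beta(a)(a-b)+(\beta(a)-\beta(b))b$ with the Lipschitz constant $L$. The paper organizes this as bounds on two constants $K_{1}<1$ and $K_{2}\leq\tau$ rather than your explicit quadratic inequality in $\tau$, but that is only a cosmetic difference.
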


\begin{proof}
For convenience, we write the equations in problem
$(\mathbb{P}_{\mathrm{w}})$ as follows:
\begin{align*}
\frac{1}{\mathrm{Re}}(\nabla \bar {\mathbf{v}},\nabla {\mathbf{y}})_{2}=\,&\langle {
\mathbf{g}}_{1},{\mathbf{y}}\rangle - \langle B_{1}(\bar {\mathbf{v}}+{\mathbf{V}},
\bar {\mathbf{v}}+{\mathbf{V}}),{\mathbf{y}}\rangle -\langle A_{1}({\mathbf{V}},\bar {u}+U),{
\mathbf{y}}\rangle &\,\forall \,&{\mathbf{y}}\in V_{1},
\\
\frac{1}{\mathrm{Re}\,\mathrm{Pr}}(\nabla \bar {u},\nabla y)_{2}=\,&\langle g_{2},y
\rangle - \langle B_{2}(\bar {\mathbf{v}}+{\mathbf{V}},\bar {u}+U),y\rangle -
\langle A_{2}(U),y\rangle
\\
&+\langle H(\bar {\mathbf{v}}+{\mathbf{V}},\bar {u}+U),y\rangle &\,\forall \,& y
\in V_{2}.
\end{align*}
From this, we observe that a necessary and sufficient condition for
$(\bar {\mathbf{v}},\bar {u})$ to be a solution to
$(\mathbb{P}_{\mathrm{w}})$ is that
\begin{equation*}
F(\bar {{\mathbf{v}}},\bar {u})=(\bar {{\mathbf{v}}},\bar {u})
\end{equation*}
for the map $F:=(S_{1}^{-1}\,P_{1}, S_{2}^{-1}\,P_{2})$, where
$P_{1}:V_{1}\times V_{2}\to V_{1}'$ and
$P_{2}:V_{1}\times V_{2}\to V_{2}'$ are defined as
%
\begin{align}
\label{P}
P_{1}({\mathbf{w}},w):=\,&{\mathbf{g}}_{1}- B_{1}({\mathbf{w}}+{\mathbf{V}},{\mathbf{w}}+{\mathbf{V}})-A_{1}({
\mathbf{V}},w+U),
\\
P_{2}({\mathbf{w}},w):=\,& g_{2}-B_{2}({\mathbf{w}}+{\mathbf{V}},w+U)-A_{2}(U)+H({
\mathbf{w}}+{\mathbf{V}},w+U),
\end{align}
and $S_{1}:V_{1}\to V_{1}'$, $S_{2}:V_{2}\to V_{2}'$ are given by
%
\begin{equation}
\label{S}
\langle S_{1}({\mathbf{w}}),{\mathbf{y}}\rangle =\frac{1}{\mathrm{Re}}(\nabla {
\mathbf{w}},\nabla {\mathbf{y}})_{2}\qquad \text{and}\qquad \langle S_{2}(w),y
\rangle =\frac{1}{\mathrm{Pr}\,\mathrm{Re}}(\nabla w,\nabla y)_{2}.
\end{equation}
We notice that $V_{1}$ and $V_{2}$ are Hilbert spaces with the inner product
$(\nabla \cdot ,\nabla \cdot )_{2}$ and hence the inverse maps
$S_{1}^{-1}:V_{1}'\to V_{1}$ and $S_{2}^{-1}:V_{2}'\to V_{2}$ exist by
virtue of Riesz's theorem. Therefore, $F$ is well-defined.

The rest of the proof consists of proving that $F$ is a contraction from
some closed ball $B_{\tau }$ into itself. We introduce the following constants
to simplify notation:
\begin{align*}
\ell _{{\mathbf{v}}_{\mathrm{i}}}:=\,&\|{\mathbf{v}}_{\mathrm{i}}\|_{W^{1/2,2}(
\Gamma _{\mathrm{i}})^{d}},&\quad \ell _{u_{\mathrm{d}}}:=\,&\|u_{\mathrm{d}}\|_{W^{1/2,2}(\Gamma _{\mathrm{d}})},&\quad \ell _{\beta }:=
\|\beta \|_{L^{\infty }(\mathbb{R})},
\\
\ell _{{\mathbf{g}}_{1}}:=\,&\|{\mathbf{g}}_{1}\|_{V_{1}'},&\quad \ell _{g_{2}}:=
\,&\|g_{2}\|_{V_{2}'}.
\end{align*}

Let $\tau >0$, and consider
$({\mathbf{w}}_{1},w_{1}),({\mathbf{w}}_{2},w_{2})\in B_{\tau }$. From the linearity
of $S_{1}$ and $S_{2}$, and the identities
\begin{equation*}
\|S_{1}^{-1}({\mathbf{h}}_{1})\|_{V_{1}}=\mathrm{Re}\,\|{\mathbf{h}}_{1}\|_{V_{1}'}
\quad \forall \,{\mathbf{h}}_{1}\in V_{1}',\qquad \|S_{2}^{-1}(h_{2})\|_{V_{2}}=
\mathrm{Pr}\,\mathrm{Re}\,\|{h_{2}}\|_{V_{2}'}\quad \forall \,h_{2}
\in V_{2}',
\end{equation*}
we find that
%
\begin{equation}
\label{eq:Festimate}
\begin{split} &\|F({\mathbf{w}}_{1},w_{1})-F({\mathbf{w}}_{2},w_{2})\|_{V_{1}
\times V_{2}}
\\
&\quad =\mathrm{Re}\,\|P_{1}({\mathbf{w}}_{1},w_{1})-P_{1}({\mathbf{w}}_{2},w_{2})
\|_{V_{1}'}+\mathrm{Pr}\,\mathrm{Re}\,\|P_{2}({\mathbf{w}}_{1},w_{1})-P_{2}({
\mathbf{w}}_{2},w_{2})\|_{V_{2}'}.
\end{split}
\end{equation}
Further, we observe that
\begin{equation*}
\begin{split} &P_{1}({\mathbf{w}}_{1},w_{1})-P_{1}({\mathbf{w}}_{2},w_{2})
\\
&\quad =B_{1}({\mathbf{w}}_{2}+{\mathbf{V}},{\mathbf{w}}_{1}-{\mathbf{w}}_{2}) +B_{1}({
\mathbf{w}}_{1}-{\mathbf{w}}_{2},{\mathbf{w}}_{1}+{\mathbf{V}})-A_{1}(0,w_{1}-w_{2}).
\end{split}
\end{equation*}
Then, using the estimates  {\eqref{B1-W12}} and  {\eqref{A1}} we obtain that
%
\begin{equation}
\label{eq:F1estimate}
\begin{split} &\|P_{1}({\mathbf{w}}_{1},w_{1})-P_{1}({\mathbf{w}}_{2},w_{2})\|_{V_{1}'}
\\
&\quad \leq C(\|{\mathbf{w}}_{2}+{\mathbf{V}}\|_{W^{1,2}(\Omega )^{d}}+\|{\mathbf{w}}_{1}+{
\mathbf{V}}\|_{W^{1,2}(\Omega )^{d}})\|{\mathbf{w}}_{1}-{\mathbf{w}}_{2}\|_{V_{1}}+C
\,\frac{\mathrm{Gr}}{\mathrm{Re}^{2}}\|w_{1}-w_{2}\|_{V_{2}}
\\
&\quad \leq C\left (\tau +\ell _{{\mathbf{v}}_{\mathrm{i}}}+
\frac{\mathrm{Gr}}{\mathrm{Re}^{2}}\right )\|({\mathbf{w}}_{1},w_{1})-({
\mathbf{w}}_{2},w_{2})\|_{V_{1}\times V_{2}}.
\end{split}
\end{equation}
We shall now estimate
$\|P_{2}({\mathbf{w}}_{1},w_{1})-P_{2}({\mathbf{w}}_{2},w_{2})\|_{V_{2}'}$. We write
\begin{equation*}
P_{2}({\mathbf{w}}_{1},w_{1})-P_{2}({\mathbf{w}}_{2},w_{2})=I+J,
\end{equation*}
where
\begin{align*}
I:=\,&B_{2}({\mathbf{w}}_{2}+{\mathbf{V}},w_{1}-w_{2})+B_{2}({\mathbf{w}}_{1}-{\mathbf{w}}_{2},w_{1}+U),
\\
J:=\,&H({\mathbf{w}}_{1}+{\mathbf{V}},w_{1}+U)-H({\mathbf{w}}_{2}+{\mathbf{V}},w_{2}+U).
\end{align*}
Similarly as we estimated
$\|P_{1}({\mathbf{w}}_{1},w_{1})-P_{1}({\mathbf{w}}_{2},w_{2})\|_{V_{1}'}$, we have
\begin{equation*}
\begin{split} &\|I\|_{V_{2}'}\leq C\left (\tau +\ell _{{\mathbf{v}}_{\mathrm{i}}}+\ell _{u_{\mathrm{d}}}\right )\|({\mathbf{w}}_{1},w_{1})-({
\mathbf{w}}_{2},w_{2})\|_{V_{1}\times V_{2}}.
\end{split}
\end{equation*}
Let $y\in V_{2}$. We consider
\begin{equation*}
\langle H({\mathbf{w}}_{1}+{\mathbf{V}},w_{1}+U),y\rangle -\langle H({\mathbf{w}}_{2}+{
\mathbf{V}},w_{2}+U),y\rangle =J_{1}+J_{2},
\end{equation*}
where
\begin{align*}
J_{1}:=\,&\langle H({\mathbf{w}}_{1}+{\mathbf{V}},w_{1}+U),y\rangle -\langle H({
\mathbf{w}}_{1}+{\mathbf{V}},w_{2}+U),y\rangle ,
\\
J_{2}:=\,&\langle H({\mathbf{w}}_{1}+{\mathbf{V}},w_{2}+U),y\rangle -\langle H({
\mathbf{w}}_{2}+{\mathbf{V}},w_{2}+U),y\rangle .
\end{align*}
From H\"{o}lder's inequality and the embeddings
$W^{1,2}(\Omega )\hookrightarrow L^{p}(\Gamma _{\mathrm{o}})$ for
$p=2$ and $p=4$, we obtain that
%
\begin{equation}
\label{Stationary-J1}
\begin{split} |J_{1}|\leq \,&\ell _{\beta }\int _{\Gamma _{\mathrm{o}}}|w_{1}-w_{2}||{
\mathbf{w}}_{1}+{\mathbf{V}}||y|\,\mathrm{d}\sigma
\\
\leq \,&\ell _{\beta }\|w_{1}-w_{2}\|_{L^{4}(\Gamma _{\mathrm{o}})}\|{
\mathbf{w}}_{1}+{\mathbf{V}}\|_{L^{4}(\Gamma _{\mathrm{o}})^{d}}\|y\|_{L^{2}(
\Gamma _{\mathrm{o}})}
\\
\leq \,&C\,\ell _{\beta }\|w_{1}-w_{2}\|_{V_{2}}(\|{\mathbf{w}}_{1}\|_{V_{1}}+
\ell _{{\mathbf{v}}_{\mathrm{i}}})\|y\|_{V_{2}}.
\end{split}
\end{equation}
Analogously, defining
$F_{\beta }({\mathbf{w}}_{i},{\mathbf{w}}_{j}):=\beta (({\mathbf{w}}_{i}+{\mathbf{V}})\cdot {
\mathbf{n}})(({\mathbf{w}}_{j}+{\mathbf{V}})\cdot {\mathbf{n}})$ for $i,j=1,2$, we have
%
\begin{equation}
\label{Stationary-J2}
\begin{split} |J_{2}|\leq \,&\int _{\Gamma _{\mathrm{o}}}|w_{2}+U||F_{\beta }({\mathbf{w}}_{1},{\mathbf{w}}_{1})-F_{\beta }({\mathbf{w}}_{1},{\mathbf{w}}_{2})||y|
\,\mathrm{d}\sigma
\\
&+\int _{\Gamma _{\mathrm{o}}}|w_{2}+U||F_{\beta }({\mathbf{w}}_{1},{\mathbf{w}}_{2})-F_{\beta }({\mathbf{w}}_{2},{\mathbf{w}}_{2})||y|\,\mathrm{d}\sigma
\\
\leq \,&\ell _{\beta }\int _{\Gamma _{\mathrm{o}}}|w_{2}+U||{\mathbf{w}}_{1}-{
\mathbf{w}}_{2}||y|\,\mathrm{d}\sigma +L\int _{\Gamma _{\mathrm{o}}}|w_{2}+U||{
\mathbf{w}}_{2}+{\mathbf{V}}||{\mathbf{w}}_{1}-{\mathbf{w}}_{2}||y|\,\mathrm{d}\sigma
\\
\leq \,&C\ell _{\beta }\,(\|w_{2}\|_{V_{2}}+\ell _{u_{\mathrm{d}}})\|{
\mathbf{w}}_{1}-{\mathbf{w}}_{2}\|_{V_{1}}\|y\|_{V_{2}}
\\
&+CL\,(\|w_{2}\|_{V_{2}}+\ell _{u_{\mathrm{d}}})(\|{\mathbf{w}}_{2}\|_{V_{1}}+
\ell _{{\mathbf{v}}_{\mathrm{i}}})\|{\mathbf{w}}_{1}-{\mathbf{w}}_{2}\|_{V_{1}}\|y\|_{V_{2}},
\end{split}
\end{equation}
where $L>0$ is the Lipschitz constant of $\beta $.

From the bounds on $J_{1}$ and $J_{2}$, it follows that
\begin{equation*}
\begin{split} \|J\|_{V_{2}'}\leq \,&C(L\tau ^{2}+(\ell _{\beta }+L(\ell _{{
\mathbf{v}}_{\mathrm{i}}}+\ell _{u_{\mathrm{d}}}))\tau + \ell _{\beta }(\ell _{{
\mathbf{v}}_{\mathrm{i}}}+\ell _{u_{\mathrm{d}}})+L\ell _{{\mathbf{v}}_{\mathrm{i}}}\ell _{u_{\mathrm{d}}})
\\
&\times \|({\mathbf{w}}_{1},w_{1})-({\mathbf{w}}_{2},w_{2})\|_{V_{1}\times V_{2}},
\end{split}
\end{equation*}
and thus
%
\begin{equation}
\label{eq:F2estimate}
\begin{split} &\|P_{2}({\mathbf{w}}_{1},w_{1})-P_{2}({\mathbf{w}}_{2},w_{2})\|_{V_{2}'}
\\
&\quad \leq C(L\tau ^{2}+(1+\ell _{\beta }+L(\ell _{{\mathbf{v}}_{\mathrm{i}}}+
\ell _{u_{\mathrm{d}}}))\tau + (1+\ell _{\beta })(\ell _{{\mathbf{v}}_{\mathrm{i}}}+\ell _{u_{\mathrm{d}}})+L\ell _{{\mathbf{v}}_{\mathrm{i}}}\ell _{u_{\mathrm{d}}})
\\
&\qquad \times \|({\mathbf{w}}_{1},w_{1})-({\mathbf{w}}_{2},w_{2})\|_{V_{1}
\times V_{2}}.
\end{split}
\end{equation}

Hence, using  {\eqref{eq:F1estimate}} and  {\eqref{eq:F2estimate}} on  {\eqref{eq:Festimate}}, we obtain that
\begin{equation*}
\begin{split} &\|F({\mathbf{w}}_{1},w_{1})-F({\mathbf{w}}_{2},w_{2})\|_{V_{1}
\times V_{2}}\leq K_{1}\|({\mathbf{w}}_{1},w_{1})-({\mathbf{w}}_{2},w_{2})\|_{V_{1}
\times V_{2}},
\end{split}
\end{equation*}
where
\begin{equation*}
\begin{split} K_{1}:=\,&C\,\mathrm{Re}\,\mathrm{Pr}\,L\tau ^{2}+C(
\mathrm{Re}+\mathrm{Re}\,\mathrm{Pr}\,(1+\ell _{\beta }+L(\ell _{{
\mathbf{v}}_{\mathrm{i}}}+\ell _{u_{\mathrm{d}}})))\tau
\\
&+C\left (\mathrm{Re}\,\ell _{{\mathbf{v}}_{\mathrm{i}}}+
\frac{\mathrm{Gr}}{\mathrm{Re}}+\mathrm{Re}\,\mathrm{Pr}\,(1+
\ell _{\beta })(\ell _{{\mathbf{v}}_{\mathrm{i}}}+\ell _{u_{\mathrm{d}}})+
\mathrm{Re}\,\mathrm{Pr}\,L\ell _{{\mathbf{v}}_{\mathrm{i}}}\ell _{u_{\mathrm{d}}}\right ).
\end{split}
\end{equation*}

Analogous computations yield
\begin{equation*}
\begin{split} \|F({\mathbf{w}},w)\|_{V_{1}\times V_{2}} \leq \,& K_{2}
\end{split}
\end{equation*}
for any $({\mathbf{w}},w)\in B_{\tau }$, where
\begin{equation*}
\begin{split} K_{2}:=\,&C(\mathrm{Re}+\mathrm{Re}\,\mathrm{Pr}\,(1+
\ell _{\beta }))\tau ^{2}+C\left (\frac{\mathrm{Gr}}{\mathrm{Re}}+
\mathrm{Re}\,\mathrm{Pr}\,(1+\ell _{\beta })(\ell _{{\mathbf{v}}_{\mathrm{i}}}+\ell _{u_{\mathrm{d}}})\right )\tau
\\
&+C\left (\mathrm{Re}\,(\ell _{{\mathbf{g}}_{1}}+\ell _{{\mathbf{v}}_{\mathrm{i}}}^{2})+\frac{\mathrm{Gr}}{\mathrm{Re}}\ell _{u_{\mathrm{d}}}+\mathrm{Re}\,\mathrm{Pr}\,(\ell _{g_{2}}+\ell _{{\mathbf{v}}_{\mathrm{i}}}\ell _{u_{\mathrm{d}}})\right )+C(\ell _{{\mathbf{v}}_{\mathrm{i}}}+\ell _{u_{\mathrm{d}}}).
\end{split}
\end{equation*}

From the definitions of $K_{1}$ and $K_{2}$, we observe that we can always
find $\tau >0$ and
$\varepsilon _{1},\varepsilon _{2},\varepsilon _{3}>0$ such that if  {\eqref{Stationary-RePrGr}} holds true, then
\begin{align*}
K_{1}<\,1\qquad \text{ and } \qquad K_{2}\leq \, \tau ,
\end{align*}
provided that $\ell _{{\mathbf{v}}_{\mathrm{i}}}$ and
$\ell _{u_{\mathrm{d}}}$ are sufficiently small. Hence, $F$ is a contraction
from $B_{\tau }$ into itself. Then, by Banach's fixed point theorem there
is a unique solution to $(\mathbb{P}_{\mathrm{w}})$ in the closed ball
$B_{\tau }$.
\end{proof}

\subsubsection{Existence of solutions for $\beta $ satisfying only $\mathrm{(A1)}$}
\label{sec:beta-A1}

We now aim to relax the hypotheses on $\beta $ and still retain existence of
solutions. The proof plan considers stability properties of solutions in
the case $\beta \equiv 0$ in  {Lemma~\ref{Stationary-Lemma}}, and then provides
existence by Leray-Schauder's theorem via compactness properties for the
case $\beta \not\equiv 0$ in  {Theorem~\ref{Stationary-Theorem2}}. We start
with the aforementioned lemma.

\begin{lemma}%
\label{Stationary-Lemma}
Let $\beta \equiv 0$ and $\{g_{2}^{n}\}$ be a sequence in $V_{2}'$ such
that $g_{2}^{n}\to g_{2}$ in $V_{2}'$ as $n\to\infty$. Then, provided that
$\|{\mathbf{v}}_{\mathrm{i}}\|_{W^{1/2,2}(\Gamma _{\mathrm{i}})^{d}}$ and
$\|u_{\mathrm{d}}\|_{W^{1/2,2}(\Gamma _{\mathrm{d}})}$ are small enough,
there exist $\varepsilon _{1}, \varepsilon _{2},\varepsilon _{3}>0$ such
that if  {\eqref{Stationary-RePrGr}} holds true, problems
$(\mathbb{P}_{\mathrm{w}})$ associated to $g_{2}$ or $g_{2}^{n}$,
$n\in \mathbb{N}$, have unique solutions $(\bar {\mathbf{v}},\bar {u})$ and
$(\bar {\mathbf{v}}_{n},\bar {u}_{n})$, respectively, in some closed ball
$B_{\tau }$. Furthermore,
%
\begin{equation}
\label{eq:vustrong}
(\bar {\mathbf{v}}_{n}, \bar {u}_{n})\to (\bar {\mathbf{v}}, \bar {u}) \quad
\text{ in }\quad V_{1}\times V_{2}.
\end{equation}
\end{lemma}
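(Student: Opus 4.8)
The plan is to reduce everything to the contraction machinery already developed in Theorem~\ref{Stationary-Theorem1}. Since $\beta \equiv 0$ is Lipschitz continuous (trivially, with Lipschitz constant $L=0$ and $\ell_\beta = 0$), Theorem~\ref{Stationary-Theorem1} applies verbatim to the problem associated with $g_2$ and to each problem associated with $g_2^n$, yielding in each case a unique fixed point of the corresponding map $F$, respectively $F_n$, where $F_n$ is obtained from $F$ by replacing $g_2$ by $g_2^n$ in the definition \eqref{P} of $P_2$. The first step is then to check that a \emph{single} ball $B_\tau$ serves all these problems at once. The key observation is that the contraction constant $K_1$ from the proof of Theorem~\ref{Stationary-Theorem1} is independent of the source term (it involves only $\tau$, the numbers $\mathrm{Re}, \mathrm{Pr}, \mathrm{Gr}$, the constants $\ell_\beta$ and $L$, and the boundary data), whereas the self-mapping constant $K_2$ depends on the source only through $\ell_{g_2}$. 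Because $g_2^n \to g_2$ in $V_2'$, the norms $\{\|g_2^n\|_{V_2'}\}$ are uniformly bounded by some $M < \infty$; replacing $\ell_{g_2}$ by $M$ in the estimate for $K_2$ and choosing $\tau$ and $\varepsilon_1,\varepsilon_2,\varepsilon_3$ accordingly, the conditions $K_1 < 1$ and $K_2 \le \tau$ hold simultaneously for every source, so all the problems admit their unique solutions in the same $B_\tau$.

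For the convergence \eqref{eq:vustrong}, I would exploit that $F_n$ and $F$ differ only in the source of the second equation. Using the fixed-point identities, I write
\begin{equation*}
(\bar{\mathbf{v}}_n,\bar{u}_n) - (\bar{\mathbf{v}},\bar{u}) = \bigl[ F_n(\bar{\mathbf{v}}_n,\bar{u}_n) - F_n(\bar{\mathbf{v}},\bar{u}) \bigr] + \bigl[ F_n(\bar{\mathbf{v}},\bar{u}) - F(\bar{\mathbf{v}},\bar{u}) \bigr].
\end{equation*}
The first bracket is controlled by $K_1\,\|(\bar{\mathbf{v}}_n,\bar{u}_n) - (\bar{\mathbf{v}},\bar{u})\|_{V_1\times V_2}$, since $F_n$ is a contraction with the common constant $K_1 < 1$. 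For the second bracket I note from \eqref{P} that the first component $P_1$ is unchanged while $P_2^n(\mathbf{w},w) - P_2(\mathbf{w},w) = g_2^n - g_2$, so that $F_n(\bar{\mathbf{v}},\bar{u}) - F(\bar{\mathbf{v}},\bar{u}) = (0,\, S_2^{-1}(g_2^n - g_2))$, whose $V_1\times V_2$-norm equals $\mathrm{Pr}\,\mathrm{Re}\,\|g_2^n - g_2\|_{V_2'}$ by the identity for $\|S_2^{-1}(\cdot)\|_{V_2}$ recorded in the proof of Theorem~\ref{Stationary-Theorem1}.

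Combining the two estimates and absorbing the contraction term on the left gives
\begin{equation*}
(1 - K_1)\,\|(\bar{\mathbf{v}}_n,\bar{u}_n) - (\bar{\mathbf{v}},\bar{u})\|_{V_1\times V_2} \le \mathrm{Pr}\,\mathrm{Re}\,\|g_2^n - g_2\|_{V_2'}.
\end{equation*}
Since $K_1 < 1$ and $g_2^n \to g_2$ in $V_2'$, the right-hand side vanishes as $n\to\infty$, which yields the claimed strong convergence \eqref{eq:vustrong}. The only genuinely delicate point is the first step, namely securing the contraction and self-mapping properties on one common ball $B_\tau$ for the whole family of sources; this is immediate once one tracks that $K_1$ does not see the source term and that $\{g_2^n\}$ is bounded. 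The convergence itself is then a standard continuous-dependence-of-fixed-points argument and presents no real obstacle.
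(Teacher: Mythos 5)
Your proof is correct. The first part (securing existence and uniqueness for the whole family of sources in a single ball $B_{\tau}$, by replacing $\ell _{g_{2}}$ with a uniform bound on $\{\|g_{2}^{n}\|_{V_{2}'}\}$ in $K_{2}$ and noting that $K_{1}$ does not involve the source) is exactly what the paper does. For the convergence {\eqref{eq:vustrong}}, however, you take a genuinely different and more direct route. The paper argues by compactness in two steps: it extracts a weakly convergent subsequence from the bounded family $\{(\bar {\mathbf{v}}_{n},\bar {u}_{n})\}\subset B_{\tau }$, passes to the limit in the weak formulations (using the compact embedding $W^{1,2}(\Omega )\hookrightarrow L^{4}(\Omega )$ to handle the convective terms $B_{1}$ and $B_{2}$) and invokes uniqueness to identify the limit along the whole sequence; it then tests the equations with the solutions themselves to obtain convergence of the norms, and concludes strong convergence from weak convergence plus norm convergence in a Hilbert space. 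You instead exploit that the maps $F_{n}$ are contractions on $B_{\tau }$ with a common constant $K_{1}<1$ and that $F_{n}-F=(0,\,S_{2}^{-1}(g_{2}^{n}-g_{2}))$, which after absorbing the contraction term yields the quantitative bound $\|(\bar {\mathbf{v}}_{n},\bar {u}_{n})-(\bar {\mathbf{v}},\bar {u})\|_{V_{1}\times V_{2}}\leq (1-K_{1})^{-1}\,\mathrm{Pr}\,\mathrm{Re}\,\|g_{2}^{n}-g_{2}\|_{V_{2}'}$. Your argument is shorter, avoids compactness entirely, and in fact proves more than the lemma asserts, namely Lipschitz continuous dependence of the solution on the source; it also suffices for the only place the lemma is invoked (the compactness of $G$ in Theorem~\ref{Stationary-Theorem2}), since there the perturbed sources $g_{2}+H({\mathbf{w}}_{n}+{\mathbf{V}},w_{n}+U)$ do converge strongly in $V_{2}'$. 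What the paper's longer route buys is robustness: the weak-limit-plus-norm-convergence argument would survive in settings where the solution is not produced by a uniform contraction.
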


\begin{proof}
The existence and uniqueness of solutions is proved exactly as in  {Theorem~\ref{Stationary-Theorem1}}, but replacing $\ell _{g_{2}}$ by
$\sup _{n}\,\|g_{2}^{n}\|_{V_{2}'}$ in the definition of $K_{2}$.

The proof of  {\eqref{eq:vustrong}} is split into two steps. First, we show
that the sequence of solutions $\{(\bar {\mathbf{v}}_{n},\bar {u}_{n})\}$ converges
weakly in $V_{1}\times V_{2}$ to $(\bar {\mathbf{v}},\bar {u})$, and then we
prove that the sequence of norms
$\{\|(\bar {\mathbf{v}}_{n},\bar {u}_{n})\|_{V_{1}\times V_{2}}\}$ converges
to $\|(\bar {\mathbf{v}},\bar {u})\|_{V_{1}\times V_{2}}$.

\emph{First step}. Since
$\{(\bar {\mathbf{v}}_{n},\bar {u}_{n})\}\subset B_{\tau }$, we have that
$\{(\bar {\mathbf{v}}_{n},\bar {u}_{n})\}$ is bounded in
$V_{1}\times V_{2}$ and hence it admits a weakly convergent subsequence
to some $(\tilde{{\mathbf{v}}},\tilde{u})\in B_{\tau }$. We now prove that
$(\tilde{\mathbf{v}},\tilde{u})$ solves problem $(\mathbb{P}_{\mathrm{w}})$ associated
with $g_{2}$ by taking the limit as $n\to \infty $ on
%
\begin{align}
\label{Stationary-Eq1-Beta0}
\langle A_{1}(\bar {\mathbf{v}}_{n}+{\mathbf{V}},\bar {u}_{n}+U),{\mathbf{y}}
\rangle +\langle B_{1}(\bar {\mathbf{v}}_{n}+{\mathbf{V}},\bar {\mathbf{v}}_{n}+{
\mathbf{V}}),{\mathbf{y}}\rangle =&\,\langle {\mathbf{g}}_{1},{\mathbf{y}}\rangle ,
\\
\label{Stationary-Eq2-Beta0}
\langle A_{2}(\bar {u}_{n}+U),y\rangle +\langle B_{2}(\bar {\mathbf{v}}_{n}+{
\mathbf{V}},\bar {u}_{n}+U,y\rangle =&\,\langle g_{2}^{n},y\rangle ,
\end{align}
for each ${\mathbf{y}}\in V_{1}$ and $y\in V_{2}$. Thus, by the uniqueness of
solutions in $B_{\tau }$, we will have that
$(\tilde{\mathbf{v}},\tilde{u})=(\bar {\mathbf{v}},\bar {u})$ and hence
$(\bar {\mathbf{v}}_{n},\bar {u}_{n})\rightharpoonup (\bar {\mathbf{v}},
\bar {u})$ in $V_{1}\times V_{2}$ along the entire sequence (and not only
a subsequence).

Let ${\mathbf{y}}\in V_{1}$ and $y\in V_{2}$. From the weak convergences
$\bar {\mathbf{v}}_{n}\rightharpoonup \bar {\mathbf{v}}$ in $V_{1}$,
$\bar {u}_{n}\rightharpoonup \bar {u}$ in $V_{2}$, and the strong convergence
$g_{2}^{n}\to g_{2}$ in $V_{2}'$, we obtain that
\begin{align*}
\langle A_{1}(\bar {\mathbf{v}}_{n}+{\mathbf{V}},\bar {u}_{n}+U),{\mathbf{y}}
\rangle \quad \to \quad & \langle A_{1}(\bar {\mathbf{v}}+{\mathbf{V}},\bar {u}+U),{
\mathbf{y}}\rangle ,
\\
\langle A_{2}(\bar {u}_{n}+U),y\rangle \quad \to \quad & \langle A_{2}(
\bar {u}+U),y\rangle ,
\\
\langle g_{2}^{n},y\rangle \quad \to \quad & \langle g_{2},y\rangle .
\end{align*}
We notice that
\begin{equation*}
\langle B_{1}(\bar {\mathbf{v}}_{n}+{\mathbf{V}},\bar {\mathbf{v}}_{n}+{\mathbf{V}},{\mathbf{y}}
\rangle =\displaystyle \sum _{i,j=1}^{d}\int _{\Omega }{\mathbf{y}}_{j}\,(
\bar {\mathbf{v}}_{n}+{\mathbf{V}})_{i}\,
\frac{\partial (\bar {\mathbf{v}}_{n}+{\mathbf{V}})_{j}}{\partial x_{i}}\,
\mathrm{d}x.
\end{equation*}
Let $i,j\in \{1,\ldots ,d\}$. Since
${\bar {\mathbf{v}}}_{n}\rightharpoonup \bar {\mathbf{v}}$ in $V_{1}$, we have
\begin{equation*}
\frac{\partial (\bar {\mathbf{v}}_{n}+{\mathbf{V}})_{j}}{\partial x_{i}}\quad
\rightharpoonup \quad
\frac{\partial (\bar {\mathbf{v}}+{\mathbf{V}})_{j}}{\partial x_{i}}\qquad
\text{in}\quad L^{2}(\Omega ).
\end{equation*}
In addition,
\begin{equation*}
{\mathbf{y}}_{j}\,(\bar {\mathbf{v}}_{n}+{\mathbf{V}})_{i}\quad \to \quad {\mathbf{y}}_{j}
\,(\bar {\mathbf{v}}+{\mathbf{V}})_{i}\qquad \text{in}\quad L^{2}(\Omega ),
\end{equation*}
given that
$(\bar {\mathbf{v}}_{n}+{\mathbf{V}})_{i}\to (\bar {\mathbf{v}}+{\mathbf{V}})_{i}$ in
$L^{4}(\Omega )$ by virtue of the compact embedding
$W^{1,2}(\Omega )\hookrightarrow L^{4}(\Omega )$. Therefore,
%
\begin{align}
\label{Stationary-Convergence-B1-prev}
\langle B_{1}(\bar {\mathbf{v}}_{n}+{\mathbf{V}},\bar {\mathbf{v}}_{n}+{\mathbf{V}}),{
\mathbf{y}}\rangle \quad \to \quad & \langle B_{1}(\bar {\mathbf{v}}+{\mathbf{V}},
\bar {\mathbf{v}}+{\mathbf{V}}),{\mathbf{y}}\rangle .
\end{align}
By an identical argument, we find that
%
\begin{align}
\label{Stationary-Convergence-B1}
\langle B_{2}(\bar {\mathbf{v}}_{n}+{\mathbf{V}},\bar {u}_{n}+U),y\rangle
\quad \to \quad & \langle B_{2}(\bar {\mathbf{v}}+{\mathbf{V}},\bar {u}+U),y
\rangle ,
\end{align}
which completes the proof of the first step, i.e.,
$(\bar {\mathbf{v}}_{n},\bar {u}_{n})\rightharpoonup (\bar {\mathbf{v}},
\bar {u})$ in $V_{1}\times V_{2}$.

\emph{Second step}. Taking ${\mathbf{y}}=\bar {\mathbf{v}}_{n}$ and
$y=\bar {u}_{n}$ in  {\eqref{Stationary-Eq1-Beta0}} and  {\eqref{Stationary-Eq2-Beta0}}, respectively, we find that
%
\begin{align}
\label{Stationary-Eq1-Beta0-n}
\frac{1}{\mathrm{Re}}\|\bar {\mathbf{v}}_{n}\|_{V_{1}}^{2}=\,&\langle {
\mathbf{g}}_{1},\bar {\mathbf{v}}_{n}\rangle - \langle B_{1}(\bar {\mathbf{v}}_{n}+{
\mathbf{V}},\bar {\mathbf{v}}_{n}+{\mathbf{V}}),\bar {\mathbf{v}}_{n}\rangle -\langle A_{1}({
\mathbf{V}},\bar {u}_{n}+U),\bar {\mathbf{v}}_{n}\rangle ,
\\
\label{Stationary-Eq2-Beta0-n}
\frac{1}{\mathrm{Re}\,\mathrm{Pr}}\|\bar {u}_{n}\|_{V_{2}}^{2}=\,&\langle g_{2},
\bar {u}_{n}\rangle - \langle B_{2}(\bar {\mathbf{v}}_{n}+{\mathbf{V}},\bar {u}_{n}+U),
\bar {u}_{n}\rangle -\langle A_{2}(U),\bar {u}_{n}\rangle .
\end{align}
Similarly, we have
%
\begin{align}
\label{Stationary-Eq1-Beta0-*}
\frac{1}{\mathrm{Re}}\|\bar {\mathbf{v}}\|_{V_{1}}^{2}=\,&\langle {\mathbf{g}}_{1},
\bar {\mathbf{v}}\rangle - \langle B_{1}(\bar {\mathbf{v}}+{\mathbf{V}},\bar {\mathbf{v}}+{
\mathbf{V}}),\bar {\mathbf{v}}\rangle -\langle A_{1}({\mathbf{V}},\bar {u}+U),
\bar {\mathbf{v}}\rangle ,
\\
\label{Stationary-Eq2-Beta0-*}
\frac{1}{\mathrm{Re}\,\mathrm{Pr}}\|\bar {u}\|_{V_{2}}^{2}=\,&\langle g_{2},
\bar {u}\rangle - \langle B_{2}(\bar {\mathbf{v}}+{\mathbf{V}},\bar {u}+U),
\bar {u}\rangle -\langle A_{2}(U),\bar {u}\rangle .
\end{align}
We shall prove the convergence
$\|(\bar {\mathbf{v}}_{n},\bar {u}_{n})\|_{V_{1}\times V_{2}}\to \|(
\bar {\mathbf{v}},\bar {u})\|_{V_{1}\times V_{2}}$ by showing that the right-hand
sides of  {\eqref{Stationary-Eq1-Beta0-n}} and  {\eqref{Stationary-Eq2-Beta0-n}} converge to the right-hand sides of  {\eqref{Stationary-Eq1-Beta0-*}} and  {\eqref{Stationary-Eq2-Beta0-*}}, respectively.

The weak convergences $\bar {\mathbf{v}}_{n}\rightharpoonup \bar {\mathbf{v}}$ in
$V_{1}$, $\bar {u}_{n}\rightharpoonup \bar {u}$ in $V_{2}$, and the strong
convergence $g_{2}^{n}\to g_{2}$ in $V_{2}'$ imply
\begin{align*}
\langle A_{1}({\mathbf{V}},\bar {u}_{n}+U),\bar {\mathbf{v}}_{n}\rangle \quad
\to \quad & \langle A_{1}({\mathbf{V}},\bar {u}+U),\bar {\mathbf{v}}\rangle ,
\\
\langle A_{2}(U),\bar {u}_{n}\rangle \quad \to \quad & \langle A_{2}(U),
\bar {u}\rangle ,
\\
\langle {\mathbf{g}}_{1},\bar {\mathbf{v}}_{n}\rangle \quad \to \quad &\langle {
\mathbf{g}}_{1},\bar {\mathbf{v}}\rangle ,
\\
\langle g_{2}^{n},\bar {u}_{n}\rangle \quad \to \quad &\langle g_{2},
\bar {u}\rangle .
\end{align*}
To obtain the first convergence, we have used that
$\bar {u}_{n}\to \bar {u}$ by virtue of the compact embedding
$W^{1,2}(\Omega )\hookrightarrow L^{2}(\Omega )$. Analogously as we proved  {\eqref{Stationary-Convergence-B1-prev}} and  {\eqref{Stationary-Convergence-B1}}, we obtain that
\begin{equation*}
\begin{split} \langle B_{1}(\bar {\mathbf{v}}_{n}+{\mathbf{V}},\bar {\mathbf{v}}_{n}+{
\mathbf{V}}),\bar {\mathbf{v}}_{n}\rangle \to \,& \langle B_{1}(\bar {\mathbf{v}}+{
\mathbf{V}},\bar {\mathbf{v}}+{\mathbf{V}},\bar {\mathbf{v}}\rangle ,
\\
\langle B_{2}(\bar {\mathbf{v}}_{n}+{\mathbf{V}},\bar {u}_{n}+U),\bar {u}_{n}
\rangle \to \,&\langle B_{2}(\bar {\mathbf{v}}+{\mathbf{V}},\bar {u}+U,\bar {u}
\rangle ,
\end{split}
\end{equation*}
and hence
$\|(\bar {\mathbf{v}}_{n},\bar {u}_{n})\|_{V_{1}\times V_{2}} \to \|(
\bar {\mathbf{v}},\bar {u})\|_{V_{1}\times V_{2}}$, which completes the proof.
\end{proof}

We are now in shape to prove the existence result for the case when
$\beta $ only satisfies (A1) but is not necessarily Lipschitz by making
use on the above lemma.

\begin{theorem}%
\label{Stationary-Theorem2}
Provided that
$\|{\mathbf{v}}_{\mathrm{i}}\|_{W^{1/2,2}(\Gamma _{\mathrm{i}})^{d}}$ and
$\|u_{\mathrm{d}}\|_{W^{1/2,2}(\Gamma _{\mathrm{i}})}$ are sufficiently small,
there exist $\varepsilon _{1},\varepsilon _{2},\varepsilon _{3}>0$ such
that if  {\eqref{Stationary-RePrGr}} holds true, then problem
$(\mathbb{P}_{\mathrm{w}})$ admits at least one solution.
\end{theorem}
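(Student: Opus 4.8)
The plan is to prove existence for merely continuous (and possibly discontinuous at the origin) $\beta$ satisfying only (A1) by a perturbation argument that approximates $\beta$ with Lipschitz functions and then applies a compactness-based fixed point theorem. First I would construct a sequence $\{\beta_n\}$ of Lipschitz continuous functions that approximate $\beta$ in a suitable sense; since $\beta$ is bounded and continuous except possibly at the origin, one can take a uniformly bounded sequence (with $\|\beta_n\|_{L^\infty(\mathbb{R})} \leq \|\beta\|_{L^\infty(\mathbb{R})}$) converging to $\beta$ pointwise away from the origin, e.g. by mollification that respects the bound. The idea is that each $\beta_n$ falls under the hypotheses of \mbox{Theorem~\ref{Stationary-Theorem1}}, so the associated problem $(\mathbb{P}_{\mathrm{w}})$ has a unique solution $(\bar{\mathbf{v}}_n, \bar{u}_n)$ in a common closed ball $B_\tau$; crucially, the radius $\tau$ and the thresholds $\varepsilon_1, \varepsilon_2, \varepsilon_3$ from \mbox{Theorem~\ref{Stationary-Theorem1}} can be chosen uniformly in $n$, because the smallness conditions on $K_1$ and $K_2$ depend on $\beta$ only through $\ell_\beta = \|\beta\|_{L^\infty(\mathbb{R})}$, which is uniformly controlled, and through the Lipschitz constant $L$, which enters only the quadratic-in-$\tau$ and higher-order terms that can be absorbed.

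Next I would set up the Leray--Schauder fixed point framework. The natural map is a solution operator: given a fixed velocity-temperature pair, solve the decoupled linear-in-the-argument problems to produce a new pair, so that fixed points correspond exactly to solutions of $(\mathbb{P}_{\mathrm{w}})$. To invoke Leray--Schauder I need this operator to be compact and to have a priori bounds on any potential fixed points along a homotopy parameter. The a priori bound comes precisely from the fact that all solutions live in $B_\tau$, which the estimates $\|F(\mathbf{w},w)\|_{V_1 \times V_2} \leq K_2 \leq \tau$ guarantee uniformly. Compactness is the key analytic ingredient and would be established using the compact embeddings $W^{1,2}(\Omega) \hookrightarrow L^4(\Omega)$ and $W^{1,2}(\Omega) \hookrightarrow L^4(\Gamma_{\mathrm{o}})$, exactly as in the convergence arguments of \mbox{Lemma~\ref{Stationary-Lemma}}: the convective terms $B_1$, $B_2$ and the boundary term $H$ all involve products that become strongly convergent under weak convergence in $V_1 \times V_2$ because of these compact embeddings.

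The main obstacle, and the place where (A1) rather than Lipschitz continuity truly matters, is passing to the limit in the nonlinear boundary term $\langle H(\bar{\mathbf{v}}_n + \mathbf{V}, \bar{u}_n + U), y\rangle = \int_{\Gamma_{\mathrm{o}}} (\bar{u}_n + U)\,\beta_n((\bar{\mathbf{v}}_n + \mathbf{V})\cdot \mathbf{n})\,((\bar{\mathbf{v}}_n + \mathbf{V})\cdot \mathbf{n})\, y\, \mathrm{d}\sigma$ as $n \to \infty$. Here two limits are entangled: $\beta_n \to \beta$ and $\bar{\mathbf{v}}_n \to \bar{\mathbf{v}}$. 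The plan is to extract a weakly convergent subsequence in $V_1 \times V_2$ with limit $(\bar{\mathbf{v}}, \bar{u})$, upgrade to strong convergence of the traces on $\Gamma_{\mathrm{o}}$ in $L^4(\Gamma_{\mathrm{o}})$ via compact trace embeddings, and then pass to a further subsequence converging pointwise a.e.\ on $\Gamma_{\mathrm{o}}$. The delicate point is the discontinuity of $\beta$ at the origin: one must argue that the set where $(\bar{\mathbf{v}} + \mathbf{V})\cdot \mathbf{n} = 0$ causes no trouble, either because $\beta_n((\bar{\mathbf{v}}_n + \mathbf{V})\cdot\mathbf{n})\,((\bar{\mathbf{v}}_n + \mathbf{V})\cdot\mathbf{n}) \to \beta(s)\,s$ holds at $s=0$ regardless of the one-sided limits of $\beta$ (the factor $s$ vanishes), so the product is continuous through the origin, or by invoking a dominated convergence argument with the uniform bound $\ell_\beta$ controlling the integrand. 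Once this limit identification is secured, $(\bar{\mathbf{v}}, \bar{u})$ solves $(\mathbb{P}_{\mathrm{w}})$ with the original $\beta$, completing the existence proof. I would expect the verification that the origin's discontinuity is harmless to be the subtle technical heart of the argument, with the remaining compactness and homotopy-bound checks being routine given \mbox{Lemma~\ref{Stationary-Lemma}} and the operator estimates already in hand.
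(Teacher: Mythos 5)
Your proposal conflates two distinct strategies, and the one you lean on most heavily --- approximating $\beta$ by Lipschitz functions $\beta_{n}$ and solving each approximate problem via Theorem~\ref{Stationary-Theorem1} --- has a genuine gap at the uniformity step. Since $\beta$ is not Lipschitz, any Lipschitz approximants converging to it (even only pointwise away from the origin) must have Lipschitz constants $L_{n}\to\infty$. Your claim that $L$ ``enters only the quadratic-in-$\tau$ and higher-order terms that can be absorbed'' is not correct: in the contraction constant $K_{1}$ of Theorem~\ref{Stationary-Theorem1} the Lipschitz constant also appears in the $\tau$-independent term $C\,\mathrm{Re}\,\mathrm{Pr}\,L\,\ell _{{\mathbf{v}}_{\mathrm{i}}}\ell _{u_{\mathrm{d}}}$ and in the linear-in-$\tau$ term $C\,\mathrm{Re}\,\mathrm{Pr}\,L(\ell _{{\mathbf{v}}_{\mathrm{i}}}+\ell _{u_{\mathrm{d}}})\tau$. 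With $L_{n}\to\infty$ and the data and the numbers $\mathrm{Re},\mathrm{Pr},\mathrm{Gr}$ fixed, the requirement $K_{1}<1$ eventually fails, so neither the radius $\tau$ nor the thresholds $\varepsilon _{1},\varepsilon _{2},\varepsilon _{3}$ can be chosen uniformly in $n$; for large $n$ the approximate problems are simply not covered by Theorem~\ref{Stationary-Theorem1}. (The a priori bound $K_{2}\leq \tau$ is indeed $L$-free, but that only controls fixed points already known to exist.)

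The approximation is in fact unnecessary, and the paper's proof shows how to avoid it. One freezes the argument of the boundary operator: given $(\tilde{\mathbf{v}},\tilde{u})\in B_{\tau }$, one solves the system with $\beta \equiv 0$ and with $g_{2}$ replaced by $g_{2}+H(\tilde{\mathbf{v}}+{\mathbf{V}},\tilde{u}+U)$; this auxiliary problem is well posed by the argument of Theorem~\ref{Stationary-Theorem1} (now with $L=0$), which defines a map $G:B_{\tau }\to B_{\tau }$ whose fixed points solve $(\mathbb{P}_{\mathrm{w}})$. Compactness of $G$ reduces, via the stability statement of Lemma~\ref{Stationary-Lemma}, to showing that $({\mathbf{w}},w)\mapsto H({\mathbf{w}}+{\mathbf{V}},w+U)$ maps weak convergence in $V_{1}\times V_{2}$ to strong convergence in $V_{2}'$. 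This is where your correct key observation enters --- the map $z\mapsto \beta (z\cdot {\mathbf{n}})(z\cdot {\mathbf{n}})$ is continuous even at $z\cdot {\mathbf{n}}=0$ because the vanishing factor kills the jump of $\beta$ --- but it is packaged as continuity of the associated Nemytskii operator on $L^{p}(\Gamma _{\mathrm{o}})$ for $p<4$, combined with the compact trace embeddings; no pointwise-a.e.\ subsequence extraction or double limit in $(\beta _{n},\bar {\mathbf{v}}_{n})$ is needed. Leray--Schauder then gives existence. If you want to salvage your outline, drop the approximation of $\beta$ entirely and make the frozen-argument operator $G$ the object whose compactness you prove.
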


\begin{proof}
Let $\tau >0$ and $(\tilde{\mathbf{v}},\tilde{u})\in B_{\tau }$. From the estimate  {\eqref{H-W12}} we find that
\begin{equation*}
\|H(\tilde{\mathbf{v}}+{\mathbf{V}},\tilde{u}+U)\|_{V_{2}'}\leq C\,\|\beta \|_{L^{\infty }(\mathbb{R})}(\tau +\|{\mathbf{v}}_{\mathrm{i}}\|_{W^{1,2}(\Gamma _{\mathrm{i}})^{d}})(\tau +\|u_{\mathrm{d}}\|_{W^{1,2}(\Gamma _{\mathrm{d}})}).
\end{equation*}

Thus, we replace $\ell _{g_{2}}$ by
$C\|\beta \|_{L^{\infty }(\mathbb{R})}(\tau +\|{\mathbf{v}}_{\mathrm{i}}\|_{W^{1,2}(
\Gamma _{\mathrm{i}})^{d}})(\tau +\|u_{\mathrm{d}}\|_{W^{1,2}(\Gamma _{\mathrm{d}})})$ in the definition of $K_{2}$ in the proof of  {Theorem~\ref{Stationary-Theorem1}} and observe that the same arguments yield the
existence and uniqueness of a solution
$(\bar {\mathbf{v}},\bar {u})\in B_{\tau }$ to
%
\begin{align}
\label{Stationary-Eq1-FixedPoint}
A_{1}(\bar {\mathbf{v}}+{\mathbf{V}},\bar {u}+U)+B_{1}(\bar {\mathbf{v}}+{\mathbf{V}},
\bar {\mathbf{v}}+{\mathbf{V}})=&\,{\mathbf{g}}_{1}&\text{in}&\quad V_{1}',
\\
\label{Stationary-Eq2-FixedPoint}
A_{2}(\bar {u}+U)+B_{2}(\bar {\mathbf{v}}+{\mathbf{V}},\bar {u}+U)=&\,g_{2}+H(
\tilde{\mathbf{v}}+{\mathbf{V}},\tilde{u}+U)&\text{in}&\quad V_{2}',
\end{align}
for some $\tau >0$. Therefore, the map $G:B_{\tau }\to B_{\tau }$, given by
$G(\tilde{\mathbf{v}},\tilde{u})=(\bar {\mathbf{v}},\bar {u})$, where
$(\bar {\mathbf{v}},\bar {u})\in B_{\tau }$ satisfies  {\eqref{Stationary-Eq1-FixedPoint}} and  {\eqref{Stationary-Eq2-FixedPoint}}, is well-defined. From  {\eqref{Stationary-Eq1-FixedPoint}}- {\eqref{Stationary-Eq2-FixedPoint}}, we
observe that any $(\bar {\mathbf{v}},\bar {u})\in B_{\tau }$ such that
$G(\bar {\mathbf{v}},\bar {u})=(\bar {\mathbf{v}},\bar {u})$ is a solution to
$(\mathbb{P}_{\mathrm{w}})$. The rest of the proof is devoted to showing that
$G$ is a compact map, which will give the existence of solutions to
$(\mathbb{P}_{\mathrm{w}})$ by Leray-Schauder's fixed point theorem.

Let $\{({\mathbf{w}}_{n},w_{n})\}\subset B_{\tau }$ be a weakly convergent sequence
in $V_{1}\times V_{2}$ to some $({\mathbf{w}},w)\in B_{\tau }$. We observe that,
by virtue of  {Lemma~\ref{Stationary-Lemma}}, to prove the convergence
$G({\mathbf{w}}_{n},w_{n})\to G({\mathbf{w}},w)$ it suffices to show that
%
\begin{equation}
\label{Stationary-Convegence-H}
H({\mathbf{w}}_{n},w_{n})\to H({\mathbf{w}},w)\quad \text{in}\quad V_{2}'.
\end{equation}

Since the embedding
$W^{1,2}(\Omega )\hookrightarrow L^{p}(\Gamma _{\mathrm{o}})$ is compact
for any $1\leq p<4$ (see Theorem 6.2 of \cite{Ne2012}), we find that
%
\begin{equation}
\label{Stationary-Convergences-uv}
{\mathbf{w}}_{n}\to {\mathbf{w}}\quad \text{in}\quad L^{p}(\Gamma _{\mathrm{o}})^{d}
\quad \text{ and } \quad w_{n}\to w\quad \text{in}\quad L^{p}(\Gamma _{\mathrm{o}})\quad \text{for}\quad 1\leq p<4.
\end{equation}
We notice that the map
$\vartheta :\Gamma _{\mathrm{o}}\times \mathbb{R}^{d}\to \mathbb{R}$ given
by $\vartheta (x,z):=\beta (z\cdot {\mathbf{n}}(x))\,(z\cdot {\mathbf{n}}(x))$ is continuous
with respect to the second argument for a.e.
$x\in \Gamma _{\mathrm{o}}$ since the only point where $\beta $ is allowed
to be discontinuous is the origin. Furthermore, $\vartheta $ satisfies
the growth condition
$|\vartheta (x,z)|\leq \|\beta \|_{L^{\infty }(\mathbb{R})}|z|$. Therefore,
the Nemytskii operator
\begin{equation*}
\tilde{\mathbf{w}}\mapsto F_{\beta }(\tilde{\mathbf{w}}):=\beta (\tilde{\mathbf{w}}
\cdot {\mathbf{n}})(\tilde{\mathbf{w}}\cdot {\mathbf{n}})
\end{equation*}
generated by $\vartheta $ is continuous from
$L^{p}(\Gamma _{\mathrm{o}})^{d}$ into $L^{p}(\Gamma _{\mathrm{o}})$ (see
Theorems 1 and 4 of \cite{GoKaTr1992}). Combining this with the first convergence
in  {\eqref{Stationary-Convergences-uv}}, we obtain that
\begin{equation*}
F_{\beta }({\mathbf{w}}_{n}+{\mathbf{V}})\to F_{\beta }({\mathbf{w}}+{\mathbf{V}})\quad
\text{in}\quad L^{p}(\Gamma _{\mathrm{o}})\quad 1\leq p<4.
\end{equation*}
From this and the second convergence in  {\eqref{Stationary-Convergences-uv}}, we find that
%
\begin{equation}
\label{Stationary-Convergence-beta}
(w_{n}+U)\,F_{\beta }({\mathbf{w}}_{n}+{\mathbf{V}})\to (w+U)\,F_{\beta }({\mathbf{w}}+{
\mathbf{V}}) \quad \text{in}\quad L^{p}(\Gamma _{\mathrm{o}})\qquad 1\leq p<2.
\end{equation}

Let $\xi \in (0,2/3)$ and $y\in V_{2}$. We observe that $p:=2-\xi $ and
its H\"{o}lder's conjugate $p'=\frac{p}{p-1}$ satisfy $1\leq p <2$ and
$1\leq p'<4$. Then, by H\"{o}lder's inequality we obtain that
\begin{equation*}
\begin{split} &\displaystyle \int _{\Gamma _{\mathrm{o}}} \left |(w_{n}+U)
\,F_{\beta }({\mathbf{w}}_{n}+{\mathbf{V}})-(w+U)\,F_{\beta }({\mathbf{w}}+{\mathbf{V}})
\right |\,|y|\,\mathrm{d}\sigma
\\
&\qquad \leq ||(w_{n}+U)\,F_{\beta }({\mathbf{w}}_{n}+{\mathbf{V}})-(w+U)\,F_{\beta }({\mathbf{w}}+{\mathbf{V}})||_{L^{p}(\Gamma _{\mathrm{o}})}\,||y||_{L^{p'}(
\Gamma _{\mathrm{o}})},
\end{split}
\end{equation*}
which, together with  {\eqref{Stationary-Convergence-beta}}, gives  {\eqref{Stationary-Convegence-H}}.
\end{proof}

\section{The evolutionary problem}\label{sec:evo}
In this section we consider the same geometry and boundary conditions established
in section~\ref{Stationary}, and extend results into the time evolutionary
setting. The task is significantly more complex than in the stationary
case: In this case an appropriate state space for solutions is tailored
specifically for our special case, and a restrictive partition of the boundary
is required. The problem of interest is the following time evolutionary
one.

\bigskip\noindent
\textbf{Problem $(\tilde{\mathbb{P}})$}:
Let $T>0$ be given. \emph{Find
${\mathbf{v}}:(0,T)\times \Omega \to \mathbb{R}^{d}$,
$u:(0,T)\times \Omega \to \mathbb{R}$, and
$p:(0,T)\times \Omega \to \mathbb{R}$ that satisfy the evolutionary Boussinesq
equations in} $(0,T)\times \Omega $,
%
\begin{align}
\label{BoussinesqEvolutionary}
\frac{\partial {\mathbf{v}}}{\partial t}+{\mathbf{v}}\cdot \nabla {\mathbf{v}}-
\frac{1}{\mathrm{Re}}\Delta {\mathbf{v}}+\nabla p=\,&\frac{\mathrm{Gr}}{\mathrm{Re}^{2}}
\,u\,{\mathbf{e}}+{\mathbf{g}}_{1},
\\
\label{BoussinesqEvolutionary2}
\mathrm{div}\,{{\mathbf{v}}}=\,&0,
\\
\label{BoussinesqEvolutionary3}
\frac{\partial u}{\partial t}+{\mathbf{v}}\cdot \nabla u-
\frac{1}{\mathrm{Re}\,\mathrm{Pr}}\Delta u=\,&g_{2},
\end{align}
\emph{subject to the boundary conditions}
%
\begin{align}
\label{BoundaryConditionsEvolutionary}
{\mathbf{v}}&={\mathbf{v}}_{\mathrm{i}}\quad \,\text{\emph{on}}\quad (0,T)\,\times \,
\Gamma _{\mathrm{i}},&\quad {\mathbf{v}}&=0\quad \,\,\,\text{\emph{on}}\quad (0,T)
\,\times \,\Gamma _{\mathrm{w}},
\\
\frac{\partial u}{\partial {\mathbf{n}}}&=0\quad \,\,\,\,\text{\emph{on}}\quad (0,T)
\,\times \,\Gamma _{\mathrm{n}},&\quad u&=u_{\mathrm{d}}\quad \text{\emph{on}}
\quad (0,T)\,\times \,\Gamma _{\mathrm{d}},
\\
\frac{1}{\mathrm{Re}}\,\frac{\partial {\mathbf{v}}}{\partial {\mathbf{n}}}&=p\,{
\mathbf{n}}\quad \text{\emph{on}}\quad (0,T)\,\times \,\Gamma _{\mathrm{o}},&\quad
\frac{1}{\mathrm{Re}\,\mathrm{Pr}}\,
\frac{\partial u}{\partial {\mathbf{n}}}&= u\,\beta ({\mathbf{v}}\cdot {\mathbf{n}})\,({
\mathbf{v}}\cdot {\mathbf{n}})\quad \text{\emph{on}}\quad (0,T)\,\times \,\Gamma _{\mathrm{o}},
\end{align}
\emph{and the initial conditions}
%
\begin{equation}
\label{InitialConditions}
\begin{array}{l}
{\mathbf{v}}={\mathbf{v}}_{\mathrm{o}},\qquad u=u_{\mathrm{o}},\qquad p=p_{\mathrm{o}}\qquad
 \text{\emph{on}}\quad \{0\}\times \overline{\Omega }.
\end{array}
\end{equation}
Here ${\mathbf{g}}_{1}:(0,T)\times \Omega \to \mathbb{R}^{d}$ and 
$g_{2}:(0,T)\times \Omega \to \mathbb{R}$ are given together with
${\mathbf{v}}_{\mathrm{o}}:\Omega \to \mathbb{R}^{d}$,
$u_{\mathrm{o}}:\Omega \to \mathbb{R}$, and
$p_{\mathrm{o}}:\Omega \to \mathbb{R}$, which represent the initial velocity,
temperature, and pressure distributions, respectively.

\subsection{Weak formulation}
\label{sec:weakformu-evo}

In order to address the time evolutionary problem, we define the spaces
\begin{equation*}
W_{i}(0,T):=\{\varphi \in L^{2}(0,T;V_{i}):\partial _{t}\,\varphi
\in L^{2}(0,T;V_{i}')\}
\end{equation*}
for $i=1,2$, endowed with the norms
\begin{equation*}
\|\varphi \|_{W_{i}(0,T)}:=\|\varphi \|_{L^{2}(0,T;V_{i})}+\|
\partial _{t}\,\varphi \|_{L^{2}(0,T;V_{i}')}.
\end{equation*}
The weak formulation of problem $(\tilde{\mathbb{P}})$ is determined following
analogous steps as for the stationary case leading to:

\bigskip\noindent
{\textbf{Problem $(\tilde{\mathbb{P}}_{\mathrm{w}})$}}: \emph{Find
$(\bar {\mathbf{v}},\bar {u})\in W_{1}(0,T)\times W_{2}(0,T)$ such that
$\bar {\mathbf{v}}(0)={\mathbf{v}}_{\mathrm{o}}-{\mathbf{V}}$,
$\bar {u}(0)=u_{\mathrm{o}}-U$, and}
\begin{align*}
\partial _{t}\bar {\mathbf{v}}(t)+A_{1}(\bar {\mathbf{v}}(t)+{\mathbf{V}},\bar {u}(t)+U)+B_{1}(
\bar {\mathbf{v}}(t)+{\mathbf{V}},\bar {\mathbf{v}}(t)+{\mathbf{V}})=\,&{\mathbf{g}}_{1}(t)
\qquad &\text{in}&\quad V_{1}',
\\
\partial _{t}\bar {u}(t)+A_{2}(\bar {\mathbf{v}}(t)+{\mathbf{V}},\bar {u}(t)+U)+B_{2}(
\bar {\mathbf{v}}(t)+{\mathbf{V}},\bar {u}(t)+U)&&&%
\\
-H(\bar {\mathbf{v}}(t)+{\mathbf{V}},\bar {u}(t)+U)=\,&g_{2}(t)\qquad &\text{in}&\quad V_{2}',
\end{align*}
\emph{for a.e. $t\in (0,T)$, where ${\mathbf{g}}_{1}\in L^{2}(0,T;V_{1}')$,
$g_{2}\in L^{2}(0,T;V_{2}')$,
${\mathbf{v}}_{\mathrm{o}}\in L^{2}(\Omega )^{d}$, and}
$u_{\mathrm{o}}\in L^{2}(\Omega )$.

\bigskip\noindent
Then, a weak solution to $(\tilde{\mathbb{P}})$ is defined as
$(\bar {\mathbf{v}}+{\mathbf{V}},\bar {u}+U)$ where $(\bar {\mathbf{v}},\bar {u})$ solves
$(\tilde{\mathbb{P}}_{\mathrm{w}})$. Further, notice that the initial conditions
on $\bar {\mathbf{v}}$ and $\bar {u}$ in problem
$(\tilde{\mathbb{P}}_{\mathrm{w}})$ are meaningful since
$W_{i}(0,T)\hookrightarrow C([0,T];H_{i})$ for $i=1,2$ (see, e.g.,
\cite{Sh1997}).

\subsection{Existence of weak solutions}
\label{sec:weakformuexis-evo}

The existence and uniqueness of a weak solution to
$(\tilde{\mathbb{P}})$ will be proven in  {Theorem~\ref{Evolutionary-Theorem1}} in the case that $\beta $ is a Lipschitz continuous
function and $d=2$. The proof is analogous to the one of section~\ref{Stationary-WeakSolutions} but where state spaces are defined in a
non-trivial fashion, and we restrict boundary parts to have specific geometrical
properties. In particular, we do not require working with Sobolev spaces
of non-integer order, which are sometimes used to show existence of weak
solutions to Boussinesq systems with mixed boundary conditions in open
domains; see \cite{Be2014,BeTi2015,SkKu2000}. Like in the stationary case,
the proof is built upon the assumption of small data, and provides existence
and uniqueness for $\mathrm{Re}$ and $\mathrm{Gr}$ sufficiently small,
and for $\mathrm{Pr}$ possibly large. The sense, in which all of this is
considered, is given in the statement of  {Theorem~\ref{Evolutionary-Theorem1}}.

We start with the following lemma which provides continuity properties
on some specific Bochner spaces for $B_{1}$ and $B_{2}$ in the time dependent
case.

\begin{lemma}%
\label{Evolutionary-Lemma0}
Let
${\mathbf{w}},{\mathbf{w}}_{1},{\mathbf{w}}_{2}\in L^{2}(0,T;W^{2,2}(\Omega )^{d})
\cap L^{\infty }(0,T;W^{1,2}(\Omega )^{d})$ and
$w\in L^{2}(0,T;W^{1,2}(\Omega ))$ be arbitrary. Then, we observe:

\textrm{1.} The maps
$B_{1}({\mathbf{w}}_{1},{\mathbf{w}}_{2}):(0,T)\to V_{1}'$ and
$B_{2}({\mathbf{w}},w):(0,T)\to V_{2}'$, given by
\begin{equation*}
B_{1}({\mathbf{w}}_{1},{\mathbf{w}}_{2})(t):=B_{1}({\mathbf{w}}_{1}(t),{\mathbf{w}}_{2}(t)),
\qquad B_{2}({\mathbf{w}},w)(t):=B_{2}({\mathbf{w}}(t),w(t)),
\end{equation*}
belong to $L^{2}(0,T;L^{2}(\Omega )^{d})$ and $L^{2}(0,T;V_{2}')$, respectively,
and satisfy the estimates
%
\begin{equation}
\label{Evolutionary-B1-Bochner}
\|B_{1}({\mathbf{w}}_{1},{\mathbf{w}}_{2})\|_{L^{2}(0,T;L^{2}(\Omega )^{d})}
\leq \, C \|{\mathbf{w}}_{1}\|_{L^{2}(0,T;W^{2,2}(\Omega )^{d})}\|{\mathbf{w}}_{2}
\|_{L^{\infty }(0,T;W^{1,2}(\Omega )^{d})}
\end{equation}
and
%
\begin{equation}
\label{Evolutionary-B2-Bochner}
\|B_{2}({\mathbf{w}},w)\|_{L^{2}(0,T;V_{2}')}\leq \, C \|{\mathbf{w}}\|_{L^{\infty }(0,T;W^{1,2}(\Omega )^{d})}\|w\|_{L^{2}(0,T;W^{1,2}(\Omega ))}.
\end{equation}

\textrm{2.} The map $H({\mathbf{w}},w):(0,T)\to V_{2}'$, given by
\begin{equation*}
H({\mathbf{w}},w)(t):=H({\mathbf{w}}(t),w(t)),
\end{equation*}
belongs to $L^{2}(0,T;V_{2}')$ and satisfies the estimate
\begin{equation*}
\|H({\mathbf{w}},w)\|_{L^{2}(0,T;V_{2}')}\leq C\,\|\beta \|_{L^{\infty }(
\mathbb{R})}\|{\mathbf{w}}\|_{L^{\infty }(0,T;W^{1,2}(\Omega )^{d})}\|w\|_{L^{2}(0,T;W^{1,2}(
\Omega ))}.
\end{equation*}
\end{lemma}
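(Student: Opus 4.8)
The plan is to establish all three estimates by the same two-step template: first derive a pointwise-in-time bound on the operator applied at each fixed $t$, and then integrate in time, exploiting the asymmetry of the norms by placing the factor measured in $L^{\infty}(0,T;\cdot)$ outside the temporal integral via the elementary inequality $\|fg\|_{L^{2}(0,T)}\le \|f\|_{L^{2}(0,T)}\|g\|_{L^{\infty}(0,T)}$. Measurability of the maps $t\mapsto B_{1}({\mathbf{w}}_{1}(t),{\mathbf{w}}_{2}(t))$, $t\mapsto B_{2}({\mathbf{w}}(t),w(t))$, and $t\mapsto H({\mathbf{w}}(t),w(t))$ is routine: each of $B_{1}$, $B_{2}$, $H$ is a bounded (bi)linear map between the relevant spaces and the arguments are strongly measurable Bochner functions, so the compositions are strongly measurable.

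For the statement about $B_{1}$ the only genuinely new ingredient is the claim that the output lies in $L^{2}(\Omega )^{d}$ rather than merely in $V_{1}'$. I would identify $B_{1}({\mathbf{w}}_{1},{\mathbf{w}}_{2})(t)$ with the function ${\mathbf{w}}_{1}(t)\cdot \nabla {\mathbf{w}}_{2}(t)$ as in the definition \eqref{B}, and use the Sobolev embedding $W^{2,2}(\Omega )\hookrightarrow L^{\infty }(\Omega )$, valid for $d\le 3$, to bound, for a.e.\ $t$,
\[
\|{\mathbf{w}}_{1}(t)\cdot \nabla {\mathbf{w}}_{2}(t)\|_{L^{2}(\Omega )^{d}}\le \|{\mathbf{w}}_{1}(t)\|_{L^{\infty }(\Omega )^{d}}\|\nabla {\mathbf{w}}_{2}(t)\|_{L^{2}(\Omega )^{d\times d}}\le C\,\|{\mathbf{w}}_{1}(t)\|_{W^{2,2}(\Omega )^{d}}\|{\mathbf{w}}_{2}(t)\|_{W^{1,2}(\Omega )^{d}}.
\]
Squaring, integrating over $(0,T)$, and pulling $\|{\mathbf{w}}_{2}\|_{L^{\infty }(0,T;W^{1,2})}$ out of the integral then yields \eqref{Evolutionary-B1-Bochner}. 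The regularity ${\mathbf{w}}_{1}\in L^{2}(0,T;W^{2,2})$ is exactly what makes $\|{\mathbf{w}}_{1}(\cdot )\|_{W^{2,2}}$ square-integrable in time, so the pairing of $W^{2,2}$-in-$L^{2}_{t}$ against $W^{1,2}$-in-$L^{\infty }_{t}$ is the natural one dictated by the asymmetry of the hypotheses.

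For $B_{2}$ and $H$ no new spatial estimate is needed: I would simply apply the stationary bounds \eqref{B2-W12} and \eqref{H-W12} at each fixed $t$, giving $\|B_{2}({\mathbf{w}}(t),w(t))\|_{V_{2}'}\le C\|{\mathbf{w}}(t)\|_{W^{1,2}}\|w(t)\|_{W^{1,2}}$ and the analogous bound carrying the extra factor $\|\beta \|_{L^{\infty }(\mathbb{R})}$ for $H$. Taking the $L^{2}(0,T)$ norm in $t$ and again moving $\|{\mathbf{w}}\|_{L^{\infty }(0,T;W^{1,2})}$ outside the integral produces \eqref{Evolutionary-B2-Bochner} and the stated bound for $H$; the $L^{\infty }(\mathbb{R})$-bound on $\beta $ survives intact because \eqref{H-W12} already isolates it as a multiplicative constant. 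I expect the main (and only) real obstacle to be the $B_{1}$ claim, specifically justifying the $L^{2}(\Omega )$-valued nature of the convective term through the $W^{2,2}\hookrightarrow L^{\infty }$ embedding and noting that $d\le 3$ is precisely what this embedding requires.
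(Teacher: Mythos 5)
Your proposal is correct and follows essentially the same route as the paper: the pointwise-in-time bound for $B_{1}$ via the embedding $W^{2,2}(\Omega)\hookrightarrow L^{\infty}(\Omega)$, followed by squaring and integrating in time with the $L^{\infty}(0,T;W^{1,2})$ factor pulled out, and the direct time-integration of the stationary estimates \eqref{B2-W12} and \eqref{H-W12} for $B_{2}$ and $H$. Your added remark on strong measurability of the composed maps is a point the paper leaves implicit.
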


\begin{proof}
\emph{1.} Let $t\in (0,T)$. From the embedding
$W^{2,2}(\Omega )\hookrightarrow L^{\infty }(\Omega )$, we find that
%
\begin{equation}
\label{Evolutionary-B1}
\begin{split} \int _{\Omega }|{\mathbf{w}}_{1}(t)\cdot \nabla {\mathbf{w}}_{2}(t)|^{2}
\,\mathrm{d}x\leq \,& \|{\mathbf{w}}_{1}(t)\|_{L^{\infty }(\Omega )^{d}}^{2}
\|\nabla {\mathbf{w}}_{2}(t)\|^{2}_{L^{2}(\Omega )^{d\times d}}
\\
\leq \,& C\,\|{\mathbf{w}}_{1}(t)\|_{W^{2,2}(\Omega )^{d}}^{2}\|{\mathbf{w}}_{2}(t)
\|_{W^{1,2}(\Omega )^{d}}^{2}.
\end{split}
\end{equation}
Hence, ${\mathbf{w}}_{1}(t)\cdot \nabla {\mathbf{w}}_{2}(t)\in L^{2}(\Omega )$, and
integrating with respect to $t$ from $0$ to $T$ the above expression, we
observe
\begin{equation*}
\begin{split} \int _{0}^{T}\|{\mathbf{w}}_{1}(t)\cdot \nabla {\mathbf{w}}_{2}(t)
\|^{2}_{L^{2}(\Omega )^{d}}\,\mathrm{d}t\leq \,&C\int _{0}^{T}\|{
\mathbf{w}}_{1}(t)\|_{W^{2,2}(\Omega )^{d}}^{2}\|{\mathbf{w}}_{2}(t)\|_{W^{1,2}(
\Omega )^{d}}^{2}\,\mathrm{d}t
\\
\leq \,&C\,\|{\mathbf{w}}_{1}\|^{2}_{L^{2}(0,T;W^{2,2}(\Omega )^{d})}\|{
\mathbf{w}}_{2}\|^{2}_{L^{\infty }(0,T;W^{1,2}(\Omega )^{d})}.
\end{split}
\end{equation*}
Therefore,
$B_{1}({\mathbf{w}}_{1},{\mathbf{w}}_{2})\in L^{2}(0,T;L^{2}(\Omega )^{d})$ with
the desired estimate. The result concerning the map
$B_{2}({\mathbf{w}},w)$ is obtained similarly from the estimate  {\eqref{B2-W12}}.

\emph{2.} Integrating the square of  {\eqref{H-W12}} with respect to
$t$ between $0$ and $T$, we obtain that
\begin{equation*}
\begin{split} \int _{0}^{T}\|H({\mathbf{w}},w)(t)\|_{V_{2}'}^{2}\,
\mathrm{d}t \leq \,&C\,\|\beta \|_{L^{\infty }(\mathbb{R})}^{2}\|{\mathbf{w}}
\|_{L^{\infty }(0,T;W^{1,2}(\Omega )^{d})}^{2}\|w\|_{L^{2}(0,T;W^{1,2}(
\Omega ))}^{2}.
\end{split}
\end{equation*}
Hence, $H({\mathbf{w}},w)\in L^{2}(0,T;V_{2}')$ with the desired estimate.
\end{proof}

Next we provide the result of existence and uniqueness of solutions to
the Boussinesq system of interest. The proof is given for the case when
$d=2$ and the boundary subparts $\Gamma _{\mathrm{i}}$,
$\Gamma _{\mathrm{w}}$, $\Gamma _{\mathrm{o}}$ satisfy additional regularity
and geometrical properties, which allow to obtain weak solutions with
increased spatial regularity.

\begin{theorem}%
\label{Evolutionary-Theorem1}
Let $d=2$ and $\beta :\mathbb{R}\to \mathbb{R}$ be Lipschitz continuous.
Assume that
$\Gamma _{\mathrm{i}}\cup \Gamma _{\mathrm{w}}\neq \emptyset $, the set
$A:=\Gamma \setminus (\Gamma _{\mathrm{i}}\cup \Gamma _{\mathrm{w}}
\cup \Gamma _{\mathrm{o}})$ is finite, any portion of
$\Gamma _{\mathrm{o}}$ is flat and forms a right angle with
$\Gamma _{\mathrm{i}}\cup \Gamma _{\mathrm{w}}$ at each contact point in
$A$, and that any boundary subpart of $\Gamma _{\mathrm{i}}$ or
$\Gamma _{\mathrm{w}}$ is of class $C^{\infty }$. Further, suppose that
\begin{equation*}
{\mathbf{g}}_{1}\in L^{2}(0,T;L^{2}(\Omega )^{2}),\qquad {\mathbf{v}}_{\mathrm{o}}\in V_{1},\qquad \text{and}\qquad {\mathbf{v}}_{\mathrm{i}}
\equiv 0.
\end{equation*}
Then, provided that
\begin{equation*}
\|{\mathbf{g}}_{1}\|_{L^{2}(0,T;L^{2}(\Omega )^{2})},\qquad \|g_{2}\|_{L^{2}(0,T;V_{2}')},
\qquad \|{\mathbf{v}}_{\mathrm{o}}\|_{V_{1}},\qquad \|u_{\mathrm{o}}\|_{L^{2}(
\Omega )},\qquad \|u_{\mathrm{d}}\|_{W^{1/2,2}(\Gamma _{\mathrm{d}})}
\end{equation*}
are sufficiently small, there exist
$\varepsilon _{1},\varepsilon _{2},\varepsilon _{3}>0$ such that if
%
\begin{equation}
\label{A5}
\mathrm{Re}\in (0,\varepsilon _{1}),\qquad \mathrm{Pr}\in \left (0,
\frac{\varepsilon _{2}}{\mathrm{Re}}\right ),\qquad \mathrm{Gr}\in (0,
\varepsilon _{3}\,\min (\mathrm{Re},\mathrm{Re}^{2})),
\end{equation}
there exists a unique solution
$(\bar {\mathbf{v}},\bar {u})\in W_{1}(0,T)\times W_{2}(0,T)$ to problem
$(\tilde{\mathbb{P}}_{\mathrm{w}})$ satisfying
\begin{equation*}
\bar {\mathbf{v}}\in \tilde{W}_{1}(0,T),
\end{equation*}
where $ \tilde{W}_{1}(0,T)\subset W_{1}(0,T)$ is given by
\begin{equation*}
\tilde{W}_{1}(0,T):=\,\{{\mathbf{w}}\in L^{2}(0,T;W^{2,2}(\Omega )^{2})
\cap L^{\infty }(0,T;V_{1}):\partial _{t}{\mathbf{w}}\in L^{2}(0,T;L^{2}(
\Omega )^{2})\}.
\end{equation*}
\end{theorem}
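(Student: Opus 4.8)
The plan is to recast problem $(\tilde{\mathbb{P}}_{\mathrm{w}})$ as a fixed point equation and apply Banach's theorem, exactly in the spirit of Theorem~\ref{Stationary-Theorem1}, but now on a closed ball of the Bochner space $\tilde{W}_1(0,T)\times W_2(0,T)$. The assumption $\mathbf{v}_{\mathrm{i}}\equiv 0$ lets me take the lift $\mathbf{V}\equiv 0$, so the velocity unknown carries homogeneous Dirichlet data on $\Gamma_{\mathrm{i}}\cup\Gamma_{\mathrm{w}}$ together with the do-nothing condition on $\Gamma_{\mathrm{o}}$; only the temperature retains an inhomogeneous lift $U\in W^{1,2}(\Omega)$. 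Given a trial pair $(\tilde{\mathbf{v}},\tilde{u})$ in the ball, I would define $\mathcal{F}(\tilde{\mathbf{v}},\tilde{u}):=(\bar{\mathbf{v}},\bar{u})$ by solving the two \emph{linear} parabolic problems obtained by freezing the convective and boundary nonlinearities: for the velocity, $\partial_t\bar{\mathbf{v}}+A_1(\bar{\mathbf{v}},\tilde{u})=\mathbf{g}_1-B_1(\tilde{\mathbf{v}},\tilde{\mathbf{v}})$ with $\bar{\mathbf{v}}(0)=\mathbf{v}_{\mathrm{o}}$ (where the buoyancy piece $-\tfrac{\mathrm{Gr}}{\mathrm{Re}^2}\tilde{u}\,\mathbf{e}$ inside $A_1$ is a frozen source), and for the temperature, $\partial_t\bar{u}+A_2(\bar{u}+U)=g_2-B_2(\tilde{\mathbf{v}},\tilde{u}+U)+H(\tilde{\mathbf{v}},\tilde{u}+U)$ with $\bar{u}(0)=u_{\mathrm{o}}-U$. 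A fixed point of $\mathcal{F}$ is precisely a weak solution to $(\tilde{\mathbb{P}}_{\mathrm{w}})$.

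The crux is to show that $\mathcal{F}$ is well-defined with the claimed regularity, namely that $\bar{\mathbf{v}}$ actually lands in $\tilde{W}_1(0,T)$. Here the Bochner estimates of Lemma~\ref{Evolutionary-Lemma0} are essential: since $\tilde{\mathbf{v}}\in\tilde{W}_1(0,T)\subset L^2(0,T;W^{2,2}(\Omega)^2)\cap L^\infty(0,T;V_1)$, estimate~\eqref{Evolutionary-B1-Bochner} places $B_1(\tilde{\mathbf{v}},\tilde{\mathbf{v}})$ in $L^2(0,T;L^2(\Omega)^2)$, while $\tilde{u}\in L^2(0,T;V_2)$ makes the buoyancy source lie in $L^2(0,T;L^2)$ as well. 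Thus the velocity problem is a linear Stokes-type evolution with right-hand side in $L^2(0,T;L^2)$ and initial datum $\mathbf{v}_{\mathrm{o}}\in V_1$, which is exactly the natural trace space. To conclude $\bar{\mathbf{v}}\in\tilde{W}_1(0,T)$ I would invoke parabolic maximal regularity (equivalently, a Galerkin scheme with the higher-order energy estimate obtained by testing against the Stokes operator), which rests on $W^{2,2}$ elliptic regularity for the underlying stationary Stokes operator with the present mixed Dirichlet/do-nothing boundary conditions. This is precisely where the geometric hypotheses enter: flatness of $\Gamma_{\mathrm{o}}$, the right angle it forms with $\Gamma_{\mathrm{i}}\cup\Gamma_{\mathrm{w}}$ at each point of the finite contact set $A$, and smoothness of $\Gamma_{\mathrm{i}}\cup\Gamma_{\mathrm{w}}$ together yield the configuration for which the mixed boundary value problem enjoys $H^2$ regularity, corner singularities being absent. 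The temperature problem needs only the standard $W_2(0,T)$ theory, for which Lemma~\ref{Evolutionary-Lemma0} controls $B_2$ and $H$ in $L^2(0,T;V_2')$ and $U\in W^{1,2}$ suffices.

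With $\mathcal{F}$ well-defined I would run the two usual steps. For the self-map property, testing the linear velocity equation with $\bar{\mathbf{v}}$ and with its Stokes operator, and the temperature equation with $\bar{u}$, produces a priori bounds in $\tilde{W}_1(0,T)$ and $W_2(0,T)$; the buoyancy coupling now enters both the first-order energy estimate (with weight of order $\mathrm{Gr}/\mathrm{Re}$) and the second-order $H^2$ estimate (with weight of order $\mathrm{Gr}/\mathrm{Re}^2$), and forcing both to be small yields the tighter window $\mathrm{Gr}\in(0,\varepsilon_3\min(\mathrm{Re},\mathrm{Re}^2))$ of~\eqref{A5}, while the $\mathrm{Re}$, $\mathrm{Pr}$ ranges and the smallness of the data close the bound inside a fixed ball $B_\tau$. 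For the contraction property, the difference $\mathcal{F}(\tilde{\mathbf{v}}_1,\tilde{u}_1)-\mathcal{F}(\tilde{\mathbf{v}}_2,\tilde{u}_2)$ solves the same linear systems with zero initial data and right-hand sides built from the differences of $B_1,B_2$ and of $H$; maximal regularity reduces these to their $L^2(0,T;L^2)$, resp.\ $L^2(0,T;V_2')$, norms, which are estimated by the bilinear splitting of~\eqref{eq:F1estimate}--\eqref{eq:F2estimate} via Lemma~\ref{Evolutionary-Lemma0}, and by the Lipschitz constant $L$ of $\beta$ for the boundary term exactly as in~\eqref{Stationary-J1}--\eqref{Stationary-J2}, integrated in time. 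Choosing $\tau$ and the data small, together with~\eqref{A5}, makes the contraction constant strictly less than one, and Banach's theorem gives the unique solution.

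The main obstacle, and the reason for the restrictive geometry and for $d=2$, is the global $W^{2,2}$ regularity of the velocity field under mixed boundary conditions: outside of such flat, right-angle, smooth configurations the Stokes solution develops corner singularities and fails to be in $W^{2,2}$, which would break the Bochner estimate~\eqref{Evolutionary-B1-Bochner} and hence the closure of the fixed point in $\tilde{W}_1(0,T)$. A secondary, more technical difficulty is the bookkeeping: the higher-order estimate reintroduces the convection and buoyancy terms at a level that must be reabsorbed into the Stokes dissipation, so one must track the dependence on $\mathrm{Re}$, $\mathrm{Pr}$, and $\mathrm{Gr}$ carefully to extract the admissible ranges~\eqref{A5} and the smallness thresholds for the data.
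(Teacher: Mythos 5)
Your proposal follows essentially the same route as the paper: take $\mathbf{V}\equiv 0$, set up the fixed-point map $F=(S_1^{-1}P_1,S_2^{-1}P_2)$ on a ball of $\tilde{W}_1(0,T)\times W_2(0,T)$ with the nonlinearities frozen, use Lemma~\ref{Evolutionary-Lemma0} and the $W^{2,2}$ regularity of the mixed-boundary Stokes problem (the paper's Theorem~\ref{Ap}, built on Theorem~A.1 of \cite{BeKu2016} plus a Galerkin energy argument) to make $S_1^{-1}$ land in $\tilde{W}_1(0,T)$, and close with Banach's theorem under the smallness conditions \eqref{A5}. You also correctly identify the role of the geometric hypotheses and the origin of the $\min(\mathrm{Re},\mathrm{Re}^2)$ restriction on $\mathrm{Gr}$, so the proposal matches the paper's proof in both structure and key ingredients.
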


\begin{proof}
First note that since ${\mathbf{v}}_{\mathrm{i}}\equiv 0$ by assumption,
we can select ${\mathbf{V}}\equiv 0$ also. Hence, we can write the equations
in problem $(\tilde{\mathbb{P}}_{\mathrm{w}})$ as follows: For a.e.
$t\in (0,T)$, we have
\begin{align*}
\langle \partial _{t}\bar {\mathbf{v}}(t),{\mathbf{w}}\rangle +
\frac{1}{\mathrm{Re}}(\nabla \bar {\mathbf{v}}(t),\nabla {\mathbf{w}})_{2}=\,&({
\mathbf{g}}_{1}(t),{\mathbf{w}})_{2}-\langle B_{1}(\bar {\mathbf{v}}(t),\bar {\mathbf{v}}(t)),{
\mathbf{w}}\rangle
\\
&-\langle A_{1}(0,\bar {u}(t)+U),{\mathbf{w}}\rangle \qquad \quad \forall
\,{\mathbf{w}}\in V_{1},
\\
\langle \partial _{t}\bar {u}(t),w\rangle
\hspace*{-0.05cm}
+
\hspace*{-0.05cm}
\frac{1}{\mathrm{Re}\,\mathrm{Pr}}(\nabla \bar {u}(t),\nabla w)_{2}=
\,&\langle g_{2}(t),w\rangle
\hspace*{-0.05cm}
-
\hspace*{-0.05cm}
\langle B_{2}(\bar {\mathbf{v}}(t),\bar {u}(t)
\hspace*{-0.05cm}
+
\hspace*{-0.05cm}
U),w\rangle
\hspace*{-0.05cm}
-
\hspace*{-0.05cm}
\langle A_{2}(U),w\rangle
\\
&%
\hspace*{-0.05cm}%
+
\hspace*{-0.05cm}
\langle H(\bar {\mathbf{v}}(t),\bar {u}(t)+U),w\rangle \qquad \,\,\,\,
\forall \,w\in V_{2}.
\end{align*}
In what follows, we address the existence and uniqueness
of a solution to $(\tilde{\mathbb{P}}_{\mathrm{w}})$ as a fixed point problem
by choosing the state space as $\tilde{W}_{1}(0,T)\times W_{2}(0,T)$.

We endow $\tilde{W}_{1}(0,T)$ with the norm
\begin{equation*}
\|{\mathbf{w}}\|_{\tilde{W}_{1}(0,T)}:=\|{\mathbf{w}}\|_{L^{2}(0,T;W^{2,2}(
\Omega )^{2})}+\|{\mathbf{w}}\|_{L^{\infty }(0,T;V_{1})}+\|\partial _{t}{
\mathbf{w}}\|_{L^{2}(0,T;L^{2}(\Omega )^{2})},
\end{equation*}
and the product space $\tilde{W}_{1}(0,T)\times W_{2}(0,T)$ is considered
with the usual norm
\begin{equation*}
\|({\mathbf{w}},w)\|_{\tilde{W}_{1}(0,T)\times W_{2}(0,T)}:=\|{\mathbf{w}}\|_{
\tilde{W}_{1}(0,T)}+\|w\|_{W_{2}(0,T)}.
\end{equation*}
We define the maps $P_{1}:\tilde{W}_{1}(0,T)\times W_{2}(0,T)\to L^{2}(0,T;L^{2}(
\Omega )^{2})$ and
$P_{2}:\tilde{W}_{1}(0,T)\times W_{2}(0,T)\to L^{2}(0,T;V_{2}')$ by
\begin{equation*}
\begin{split} \langle P_{1}({\mathbf{w}},w)(t),{\mathbf{y}}\rangle :=\,&({\mathbf{g}}_{1}(t),{
\mathbf{y}})_{2}-\langle B_{1}({\mathbf{w}}(t),{\mathbf{w}}(t)),{\mathbf{y}}\rangle -
\langle A_{1}(0,w(t)+U),{\mathbf{y}}\rangle 
\end{split}
\end{equation*}
and
\begin{equation*}
\begin{split} \langle P_{2}({\mathbf{w}},w)(t),y\rangle :=\,&\langle g_{2}(t),y
\rangle -\langle B_{2}({\mathbf{w}}(t),w(t)+U),y\rangle -\langle A_{2}(U),y
\rangle
\\
&+\langle H({\mathbf{w}}(t),w(t)+U),y\rangle .
\end{split}
\end{equation*}
 Note that $P_{1}$ and $P_{2}$ are well-defined by virtue of  {Lemma~\ref{Evolutionary-Lemma0}} and initial assumptions.

In addition, by analogy with the stationary case we define
$S_{1}^{-1}$ as the linear operator that maps any
${\mathbf{h}}\in L^{2}(0,T;L^{2}(\Omega )^{2})$ to the solution
${\mathbf{w}}\in \tilde{W}_{1}(0,T)$ of the abstract evolutionary Stokes problem
\begin{equation*}
\begin{split} &\langle \partial _{t}{\mathbf{w}}(t),{\mathbf{y}}\rangle +
\frac{1}{\mathrm{Re}}(\nabla {\mathbf{w}}(t),\nabla {\mathbf{y}})_{2}=({\mathbf{h}}(t),{
\mathbf{y}})_{2}\quad \text{for all ${\mathbf{y}}\in V_{1}$ and a.e. }t\in (0,T),
\\
&{\mathbf{w}}(0)={\mathbf{v}}_{\mathrm{o}}.
\end{split}
\end{equation*}
Similarly, $S_{2}^{-1}$ maps $h\in L^{2}(0,T;V_{2}')$ into the solution
$w\in W_{2}(0,T)$ to
\begin{equation*}
\begin{split} &\langle \partial _{t}w(t),y\rangle +
\frac{1}{\mathrm{Re}\,\mathrm{Pr}}(\nabla w(t),\nabla y)_{2}=
\langle h_{2}(t),y\rangle \quad \text{for all $y\in V_{2}$ and a.e. }t
\in (0,T),
\\
&w(0)=u_{\mathrm{o}}-U.
\end{split}
\end{equation*}
The existence and uniqueness result that yields the well-defined property
of $S_{1}^{-1}$ can be obtained by analogous arguments of those in Theorem
5 of section 7.1.3 of \cite{Ev2010} together with the regularity result
on solutions to steady Stokes problems given in Theorem A.1 of
\cite{BeKu2016} (see  {Theorem~\ref{Ap}} in the Appendix). The analogous result
for $S_{2}^{-1}$ follows from Proposition III.2.3 of \cite{Sh1997}.

Then, a necessary and sufficient condition for an element
$(\bar {\mathbf{v}},\bar {u})\in \tilde{W}_{1}(0,T)\times W_{2}(0,T)$ to be
a solution to $(\tilde{\mathbb{P}}_{\mathrm{w}})$ is that
\begin{equation*}
F(\bar {\mathbf{v}},\bar {u})=(\bar {\mathbf{v}},\bar {u})
\end{equation*}
for $F:=({S}_{1}^{-1} {P_{1}}, {S}_{2}^{-1}{P_{2}})$.

We denote by $B_{\tau }$ the closed ball in
$\tilde{W}_{1}(0,T)\times W_{2}(0,T)$ with radius $\tau >0$ and center
at the origin. In what follows, we shall prove that $F$ is a contraction
from some closed ball $B_{\tau }$ into itself and hence the existence and
uniqueness of a solution to problem
$(\tilde{\mathbb{P}}_{\mathrm{w}})$ in $B_{\tau }$ will follow by Banach fixed
point theorem.

Initially, we observe that
%
\begin{align}
\label{Evolutionary-S1}
{\|S_{1}^{-1}{\mathbf{h}}\|_{\tilde{W}_{1}(0,T)}}\leq \,& c_{1}(
\mathrm{Re})\|{\mathbf{h}}\|_{L^{2}(0,T;L^{2}(\Omega )^{2})}+c_{2}(
\mathrm{Re}){\|{\mathbf{v}}_{\mathrm{o}}\|_{V_{1}}},
\end{align}
for all ${\mathbf{h}}\in L^{2}(0,T;L^{2}(\Omega )^{2})$, and that
%
\begin{align}
\label{Evolutionary-S2}
{\|S_{2}^{-1}h\|_{W_{2}(0,T)}}&\leq C(1+\mathrm{Re}\,\mathrm{Pr})(
\|h\|_{L^{2}(0,T;V_{2}')}+{\|u_{\mathrm{o}}-U\|_{L^{2}(\Omega )}}),
\end{align}
for all $h\in L^{2}(0,T;V_{2}')$, where
\begin{align*}
c_{1}(\mathrm{Re}):=\,& C(1+\mathrm{Re}^{1/2}+\mathrm{Re})\qquad
\text{and}\qquad c_{2}(\mathrm{Re}):=\,C(1 +\mathrm{Re}^{-1/2}+
\mathrm{Re}^{1/2}).
\end{align*}
The first estimate is proved in  {Theorem~\ref{Ap}}. The second one is a direct
consequence of Proposition III.2.3 of \cite{Sh1997}.

Let $\tau >0$ and consider arbitrary
$({\mathbf{w}}_{1},w_{1}), ({\mathbf{w}}_{2},w_{2})\in B_{\tau }$. Estimates  {\eqref{Evolutionary-S1}}- {\eqref{Evolutionary-S2}}, together with the linearity
of $S^{-1}_{1}$ and $S^{-1}_{2}$, allow us to obtain that
\begin{equation*}
\begin{split} \|F({\mathbf{w}}_{1},w_{1})-F({\mathbf{w}}_{2},w_{2})&\|_{
\tilde{W}_{1}(0,T)\times W_{2}(0,T)}
\\
&\quad =\|S_{1}^{-1}(P_{1}({\mathbf{w}}_{1},w_{1})- P_{1}({\mathbf{w}}_{2},w_{2}))
\|_{\tilde{W}_{1}(0,T)}
\\
&\qquad \,\,+\|{S}_{2}^{-1}(P_{2}({\mathbf{w}}_{1},w_{1})- P_{2}({\mathbf{w}}_{2},w_{2}))
\|_{W_{2}(0,T)}
\\
&\quad \leq \,c_{1}(\mathrm{Re})\|P_{1}({\mathbf{w}}_{1},w_{1})-P_{1}({
\mathbf{w}}_{2},w_{2})\|_{L^{2}(0,T;L^{2}(\Omega )^{2})}
\\
&\qquad \,\,+C(1+\mathrm{Re}\,\mathrm{Pr})\|P_{2}({\mathbf{w}}_{1},w_{1})-P_{2}({
\mathbf{w}}_{2},w_{2})\|_{L^{2}(0, T;V_{2}')}.
\end{split}
\end{equation*}
From the identity
\begin{equation*}
\begin{split} &P_{1}({\mathbf{w}}_{1},w_{1})-P_{1}({\mathbf{w}}_{2},\tilde{w}_{2})=B_{1}({
\mathbf{w}}_{2},{\mathbf{w}}_{1}-{\mathbf{w}}_{2}) +B_{1}({\mathbf{w}}_{1}-{\mathbf{w}}_{2},{
\mathbf{w}}_{1})-A_{1}(0,w_{1}-w_{2}),
\end{split}
\end{equation*}
where $A_{1}(0,w_{1}-w_{2})(t):=A_{1}(0,w_{1}(t)-w_{2}(t))$, and estimates  {\eqref{A1}} and  {\eqref{Evolutionary-B1-Bochner}}, we have
\begin{equation*}
\begin{split} &\|P_{1}({\mathbf{w}}_{1},w_{1})-P_{1}({\mathbf{w}}_{2},w_{2})\|_{L^{2}(0,T;L^{2}(
\Omega )^{2})}
\\
&\quad \leq C(\|{\mathbf{w}}_{1}\|_{\tilde{W}_{1}(0,T)}+\|{\mathbf{w}}_{2}\|_{
\tilde{W}_{1}(0,T)})\|{\mathbf{w}}_{1}-{\mathbf{w}}_{2}\|_{\tilde{W}_{1}(0,T)}+C
\frac{\mathrm{Gr}}{\mathrm{Re}^{2}}\|w_{1}-w_{2}\|_{W_{2}(0,T)}
\\
&\quad \leq C\left (\tau +\frac{\mathrm{Gr}}{\mathrm{Re}^{2}}
\right )\|({\mathbf{w}}_{1},w_{1})-({\mathbf{w}}_{2},w_{2})\|_{\tilde{W}_{1}(0,T)
\times W_{2}(0,T)}.
\end{split}
\end{equation*}
Next, we estimate
$\|P_{2}({\mathbf{w}}_{1},w_{1})-P_{2}({\mathbf{w}}_{2},w_{2})\|_{L^{2}(0,T;V_{2}')}$.
In order to simplify notation, we define the following constants:
\begin{align*}
\ell _{u_{\mathrm{d}}}&:=\|u_{\mathrm{d}}\|_{W^{1/2,2}(\Gamma _{\mathrm{d}})},& \ell _{{\mathbf{v}}_{\mathrm{o}}}&:=\|{\mathbf{v}}_{\mathrm{o}}\|_{V_{1}},&
\ell _{u_{\mathrm{o}}}&:=\|u_{\mathrm{o}}\|_{L^{2}(\Omega )},
\\
\ell _{{\mathbf{g}}_{1}}&:=\|{\mathbf{g}}_{1}\|_{L^{2}(0,T;L^{2}(\Omega )^{2})},&
\ell _{g_{2}}&:=\|g_{2}\|_{L^{2}(0,T;V_{2}')},& \ell _{\beta }&:=\|
\beta \|_{L^{\infty }(\mathbb{R})}.
\end{align*}%
First, we consider
\begin{equation*}
P_{2}({\mathbf{w}}_{1},w_{1})-P_{2}({\mathbf{w}}_{2},w_{2})=\,I+J,
\end{equation*}
where
\begin{align*}
I:=\,&B_{2}({\mathbf{w}}_{2},w_{1}-w_{2})+B_{2}({\mathbf{w}}_{1}-{\mathbf{w}}_{2}, w_{1}+U),
\\
J:=\,& H({\mathbf{w}}_{1},w_{1}+U)-H({\mathbf{w}}_{2},w_{2}+U).
\end{align*}
Similarly as we estimated
$\|P_{1}({\mathbf{w}}_{1},w_{1})-P_{1}({\mathbf{w}}_{2},w_{2})\|_{L^{2}(0,T;L^{2}(
\Omega )^{2})}$, we find that
\begin{equation*}
\begin{split} &\|I\|_{L^{2}(0,T,V_{2}')}\leq C(\tau +T^{1/2}\ell _{u_{\mathrm{d}}})\|({\mathbf{w}}_{1},w_{1})-({\mathbf{w}}_{2},w_{2})\|_{\tilde{W}_{1}(0,T)
\times W_{2}(0,T)}.
\end{split}
\end{equation*}
In addition, we have (see the proof of  {Theorem~\ref{Stationary-Theorem1}})
\begin{equation*}
\begin{split} \|J(t)\|_{V_{2}'}\leq \,& C\ell _{\beta }\|w_{1}(t)-w_{2}(t)
\|_{V_{2}}\|{\mathbf{w}}_{1}(t)\|_{V_{1}}
\\
&+C\ell _{\beta }(\|w_{2}(t)\|_{V_{2}}+\ell _{u_{d}})\|{\mathbf{w}}_{1}(t)-{
\mathbf{w}}_{2}(t)\|_{V_{1}}
\\
&+CL(\|w_{2}(t)\|_{V_{2}}+\ell _{u_{d}})\|{\mathbf{w}}_{1}(t)-{\mathbf{w}}_{2}(t)
\|_{V_{1}}\|{\mathbf{w}}_{2}(t)\|_{V_{1}},
\end{split}
\end{equation*}
for almost every $t\in (0,T)$, and where $L>0$ is the Lipschitz constant
of $\beta $. From this, we find that
\begin{equation*}
\begin{split} &\|J\|_{L^{2}(0,T;V_{2}')}
\\
&\quad \leq C(L\tau ^{2}+(\ell _{\beta }+LT^{1/2}\ell _{u_{\mathrm{d}}})
\tau + T^{1/2}\,\ell _{u_{\mathrm{d}}}\,\ell _{\beta })\|({\mathbf{w}}_{1},w_{1})-({
\mathbf{w}}_{2},w_{2})\|_{\tilde{W}_{1}(0,T)\times W_{2}(0,T)}.
\end{split}
\end{equation*}
Therefore,
\begin{equation*}
\begin{split} &\|P_{2}({\mathbf{w}}_{1},w_{1})-P_{2}({\mathbf{w}}_{2},w_{2})\|_{L^{2}(0,T;V_{2}')}
\\
&\quad \leq C(L\tau ^{2}+(1+\ell _{\beta }+LT^{1/2}\ell _{u_{\mathrm{d}}})
\tau + T^{1/2}\,\ell _{u_{\mathrm{d}}}\,(1+\ell _{\beta }))
\\
&\qquad \times \|({\mathbf{w}}_{1},w_{1})-({\mathbf{w}}_{2},w_{2})\|_{\tilde{W}_{1}(0,T)
\times W_{2}(0,T)}.
\end{split}
\end{equation*}

Hence,
\begin{equation*}
\begin{split} &\|F({\mathbf{w}}_{1},w_{1})-F({\mathbf{w}}_{2},w_{2})\|_{
\tilde{W}_{1}(0,T)\times W_{2}(0,T)}\leq K_{1}\|({\mathbf{w}}_{1},w_{1})-({
\mathbf{w}}_{2},w_{2})\|_{\tilde{W}_{1}(0,T)\times W_{2}(0,T)},
\end{split}
\end{equation*}
where
\begin{equation*}
\begin{split} &K_{1}:=CL(1+\mathrm{Re}\,\mathrm{Pr})\tau ^{2}+ (c_{1}(
\mathrm{Re})+C(1+\mathrm{Re}\,\mathrm{Pr})(1+\ell _{\beta }+LT^{1/2}
\ell _{u_{\mathrm{d}}}))\tau
\\
&\qquad +c_{1}(\mathrm{Re})\frac{\mathrm{Gr}}{\mathrm{Re}^{2}}+ C
\,T^{1/2}\ell _{u_{\mathrm{d}}}(1+\mathrm{Re}\,\mathrm{Pr})(1+\ell _{\beta }).
\end{split}
\end{equation*}

From analogous arguments, we find that
\begin{equation*}
\begin{split} &\|F({\mathbf{w}},w)\|_{\tilde{W}_{1}(0,T)\times W_{2}(0,T)}
\leq K_{2},
\end{split}
\end{equation*}
for any $({\mathbf{w}},w)\in B_{\tau }$, where
\begin{equation*}
\begin{split} K_{2}:=\,& (c_{1}(\mathrm{Re})+C(1+\mathrm{Re}\,
\mathrm{Pr})(1+\ell _{\beta }))\tau ^{2}
\\
&+ \left (c_{1}(\mathrm{Re})\frac{\mathrm{Gr}}{\mathrm{Re}^{2}}+ CT^{1/2}
\ell _{u_{\mathrm{d}}}(1+\mathrm{Re}\,\mathrm{Pr})(1+\ell _{\beta })
\right )\tau
\\
&+c_{1}(\mathrm{Re})\ell _{{\mathbf{g}}_{1}}+T^{1/2}\ell _{u_{\mathrm{d}}}c_{1}(
\mathrm{Re})\frac{\mathrm{Gr}}{\mathrm{Re}^{2}}+c_{2}(\mathrm{Re})
\ell _{{\mathbf{v}}_{\mathrm{o}}}
\\
&+C\left (T^{1/2}(1+\mathrm{Re}^{-1}\,\mathrm{Pr}^{-1})\ell _{u_{\mathrm{d}}}+(1+\mathrm{Re}\,\mathrm{Pr})(\ell _{g_{2}}+\ell _{u_{\mathrm{o}}}+\ell _{u_{\mathrm{d}}})\right ).
\end{split}
\end{equation*}

Looking at the definitions of $K_{1}$ and $K_{2}$, we notice that there
exist $\tau >0$ and
$\varepsilon _{1},\varepsilon _{2},\varepsilon _{3}>0$ such that if  {\eqref{A5}} holds true, then
\begin{equation*}
K_{1}<1\qquad \text{and}\qquad K_{2}\leq \tau ,
\end{equation*}
provided that $\ell _{{\mathbf{g}}_{1}}$, $\ell _{g_{2}}$,
$\ell _{u_{\mathrm{d}}}$, $\ell _{{\mathbf{v}}_{\mathrm{o}}}$, and
$\ell _{u_{\mathrm{o}}}$ are sufficiently small. Therefore, $F$ is a contraction
from $B_{\tau }$ into itself.
\end{proof}

\section{Numerical results}\label{Section:numerics}
This section is devoted to analyzing the performance of the artificial boundary
condition proposed in this paper:
%
\begin{equation}
\label{HT-bis}
\qquad \frac{1}{\mathrm{Re}\,\mathrm{Pr}}\,
\frac{\partial u}{\partial {\mathbf{n}}}- u\,\beta ({\mathbf{v}}\cdot {\mathbf{n}})\,({
\mathbf{v}}\cdot {\mathbf{n}})=0\qquad \text{on}\quad \Gamma _{\mathrm{o}},
\end{equation}
where $\Gamma _{\mathrm{o}}$ represents an open/artificial boundary of a
truncated domain $\Omega $, and $\beta $ satisfies $(\mathrm{A1})$. We
perform a variety of tests to compare solutions on $\Omega $ using  {\eqref{HT-bis}} with respect to the restriction to $\Omega $ of a reference solution
to a problem on a larger domain $\Omega ^{\mathrm{ext}}\supset\Omega $. In all our tests we consider $\mathbf{g}_{1}\equiv 0$ and
$g_{2}\equiv 0$ in order to fully concentrate on buoyancy effects.

\subsection*{Geometrical setup and boundary conditions}
We consider a geometry that represents a 2-dimensional open cavity, similar
to that considered by Chan and Tien in \cite{ChTi1985}. Let
\begin{equation*}
\Omega :=(0,1)^{2},\quad
\Gamma _{\mathrm{o}}:=\{(1,x_{2}): x_{2}\in (0,1)\},\quad\text{and}\quad
\Omega ^{\mathrm{ext}}:=\Omega
\cup \Gamma_{\mathrm{o}}\cup ((1,2)\times (-1,2)).
\end{equation*}
The domain $\Omega $ represents the cavity and, in what follows, we refer to it as the \emph{truncation} of $\Omega ^{\mathrm{ext}}$ at the line $\Gamma_{\mathrm{o}}$, which is the open boundary for $\Omega $; see  {Fig.~\ref{Geometry}}. Further,
we refer to $\Omega ^{\mathrm{ext}}$ as the \emph{extension} of
$\Omega $ and define
$\Gamma ^{\mathrm{ext}}:=\partial \Omega ^{\mathrm{ext}}$.

\begin{figure}[ht]
\vspace{.5cm}
\centering
		\includegraphics[scale=.3]{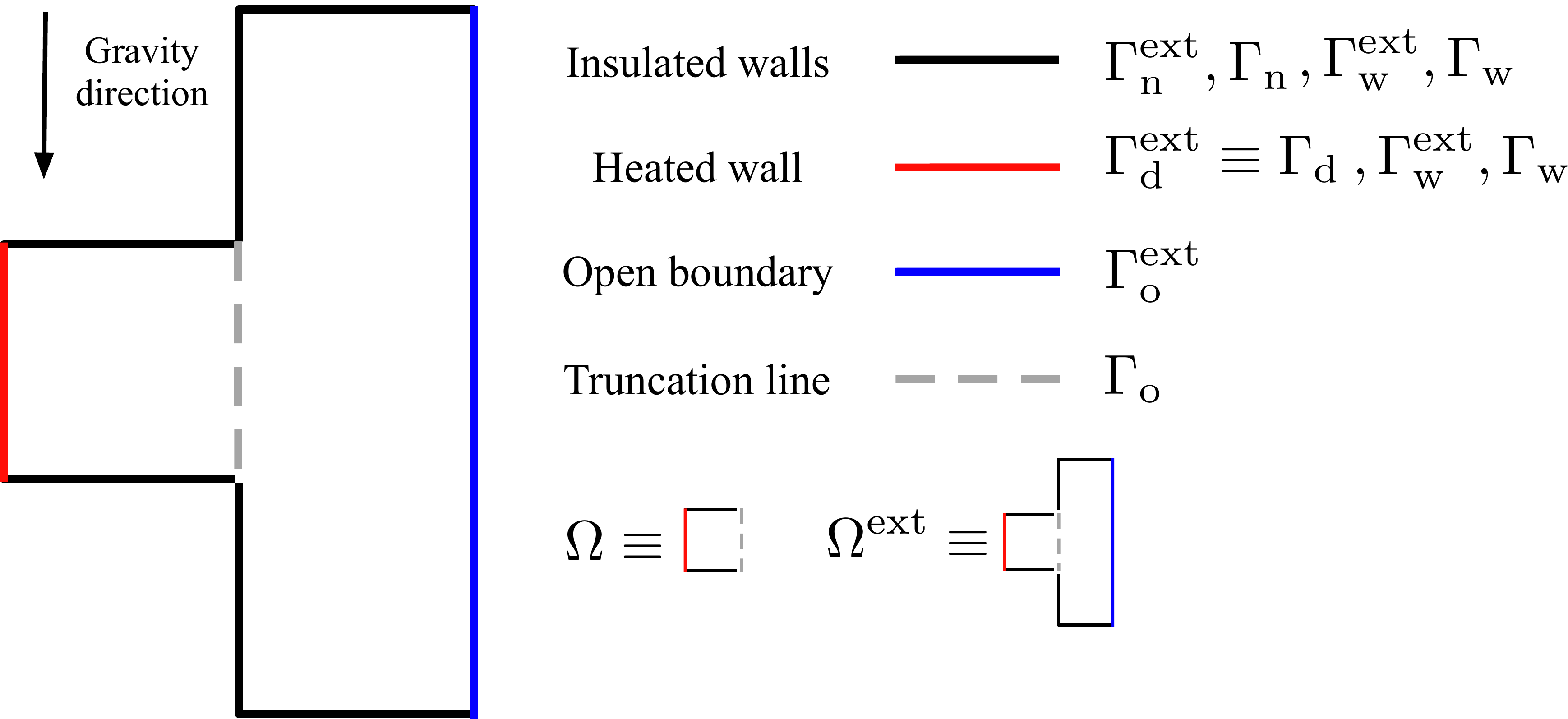}	
	\caption{\small Geometry of the setup for $\Omega$ and $\Omega^{\mathrm{ext}}$.}\label{Geometry}
\end{figure}

On the extended domain $\Omega ^{\mathrm{ext}}$, we consider the following
boundary decomposition and conditions:
\begin{align*}
{\mathbf{v}}&= 0&\text{on}&\quad \Gamma _{\mathrm{w}}^{\mathrm{ext}},& u&=1&\text{on}&\quad
\Gamma _{\mathrm{d}}^{\mathrm{ext}},&%
\\
\frac{1}{\mathrm{Re}}\,\frac{\partial {\mathbf{v}}}{\partial {\mathbf{n}}}&=p\,{
\mathbf{n}}&\text{on}&\quad \Gamma ^{\mathrm{ext}}_{\mathrm{o}},&
\frac{\partial u}{\partial {\mathbf{n}}}&= 0&\text{on}&\quad \Gamma _{\mathrm{n}}^{\mathrm{ext}} \cup \Gamma ^{\mathrm{ext}}_{\mathrm{o}},&
\end{align*}
where
\begin{align*}
\Gamma ^{\mathrm{ext}}_{\mathrm{d}}&:=\{(0,x_{2}) : x_{2}\in (0,1)\},&\qquad
\Gamma ^{\mathrm{ext}}_{\mathrm{o}}&:=\{(2,x_{2}) : x_{2}\in (-1,2)\},
\\
\Gamma _{\mathrm{n}}^{\mathrm{ext}}&:= \Gamma ^{\mathrm{ext}}
\setminus (
\overline{\Gamma ^{\mathrm{ext}}_{\mathrm{o}}\cup \Gamma _{\mathrm{d}}^{\mathrm{ext}}}),&\qquad
\Gamma _{\mathrm{w}}^{\mathrm{ext}}&:= \Gamma ^{\mathrm{ext}}
\setminus (\overline{\Gamma ^{\mathrm{ext}}_{\mathrm{o}}}).
\end{align*}
Specifically, the above conditions determine that we consider a heating
element on the left vertical wall
$\Gamma _{\mathrm{d}}^{\mathrm{ext}}$, and the rest of the boundary of
$\Omega ^{\mathrm{ext}}$ is \emph{insulated}. Furthermore, flow is allowed
to leave the extended domain on the right vertical part of the boundary,
$\Gamma ^{\mathrm{ext}}_{\mathrm{o}}$, and in the rest of the boundary a
no-slip condition is imposed. See  {Fig.~\ref{Geometry}} for clarification.
The solution on this domain is called \emph{reference solution}.

For the domain of interest, $\Omega \subset \Omega ^{\mathrm{ext}}$, we
consider:
\begin{align*}
u&=1 \quad \text{on}\quad \Gamma _{\mathrm{d}},\qquad \qquad
\frac{\partial u}{\partial {\mathbf{n}}}= 0\quad \text{on}\quad \Gamma _{\mathrm{n}},\qquad \qquad {\mathbf{v}}=0 \quad \text{on} \quad \Gamma _{
\mathrm{w}},
\end{align*}
where
\begin{align*}
\Gamma _{\mathrm{w}}&:=\Gamma \setminus
\overline{\Gamma _{\mathrm{o}}},\qquad \Gamma _{\mathrm{d}}:=\{(0, x_{2}):x_{2}
\in (0,1)\},
\\
\Gamma _{\mathrm{n}}&:= \{(x_{1},x_{2}) : x_{1}\in (0,1), x_{2}=0
\text{ or } x_{2}=1\}.
\end{align*}
See  {Fig.~\ref{Geometry}} for the graphic description of boundary parts.
In order to show the performance of the boundary condition proposed in
this paper, we consider several choices of artificial conditions on
$\Gamma _{\mathrm{o}}$. We assume that the fluid flow satisfies either
a do-nothing or a ``directional'' do-nothing condition, which consists in
adding the extra term
$-\frac{1}{2}\,{\mathbf{v}}\,({\mathbf{v}}\cdot {\mathbf{n}})_{-}$ on the left-hand side
of the do-nothing condition. This modification was recently introduced
for isothermal fluids in \cite{BrMu2014}, where the authors obtain accurate
results and enhance stability properties of the system with respect to
the standard do-nothing condition. We denote do-nothing and directional
do-nothing conditions by $(\mathrm{DN})$ and $(\mathrm{DDN})$, respectively,
that is:
\begin{align*}
(\mathrm{DN})\quad \frac{1}{\mathrm{Re}}\,
\frac{\partial {\mathbf{v}}}{\partial {\mathbf{n}}}&=p\,{\mathbf{n}}, \qquad \qquad (
\mathrm{DDN}) \quad \frac{1}{\mathrm{Re}}\,
\frac{\partial {\mathbf{v}}}{\partial {\mathbf{n}}}-\frac{1}{2}\,{\mathbf{v}}\,({\mathbf{v}}
\cdot {\mathbf{n}})_{-}=p\,{\mathbf{n}}.
\end{align*}
In addition, we assume that the artificial boundary condition  {\eqref{HT-bis}} holds true at the line of truncation. The case when
$\beta \equiv 0$ corresponds to a homogeneous Neumann condition. Then,
we denote condition  {\eqref{HT-bis}} differently according to
$\beta \equiv 0$ or $\beta \not\equiv 0$, as follows:
\begin{align*}
(\mathrm{N})\quad \frac{\partial u}{\partial {\mathbf{n}}}=0, \qquad
\qquad (\mathrm{N}_{\beta })\quad
\frac{1}{\mathrm{Re}\,\mathrm{Pr}}\,
\frac{\partial u}{\partial {\mathbf{n}}}= u\,\beta ({\mathbf{v}}\cdot {\mathbf{n}})\,({
\mathbf{v}}\cdot {\mathbf{n}}).
\end{align*}
In particular, we consider the following two choices for the function
$\beta $:
\begin{align*}
\beta _{1}(s) = 1/2 - 1/\pi \cdot \arctan (100s), \qquad \qquad
\beta _{2}(s)=
\begin{cases}
1/2 &\mbox{if } s <0,
\\
0 & \mbox{otherwise},
\end{cases}
\end{align*}
and observe that $\beta _{2}$ corresponds to the boundary condition considered
in \cite{PeThBlCr2008-a,PeThBlCr2008-b} as an ad-hoc modification on the
homogeneous Neumann condition for analysis purposes.

\subsubsection*{Discretization and solver details} 
For discretization, we follow Elman et al. \cite{MR2783823}. We subdivide
the time interval $[0,T]$ into $N$ sub-intervals of length $k$ and semi-discretize
in time  {\eqref{BoussinesqEvolutionary}},
 {\eqref{BoussinesqEvolutionary2}}, and  {\eqref{BoussinesqEvolutionary3}} in
Problem $(\tilde{\mathbb{P}})$ by applying the trapezoid rule. In this
manner, we obtain that
%
\begin{align}
\label{eq:semi_discrete_Be}
\frac{2}{k}{\mathbf{v}}^{n+1} + {\mathbf{v}}^{n+1}\cdot \nabla {\mathbf{v}}^{n+1} -
\frac{1}{\mathrm{Re}}\Delta {\mathbf{v}}^{n+1} + \nabla p^{n+1} -
\frac{\mathrm{Gr}}{\mathrm{Re}^{2}}u{\mathbf{e}} & = \frac{2}{k}{\mathbf{v}}^{n}+D_{t}
\mathbf{v}^{n},
\\
\label{eq:divfree}
\mathrm{div}\,{\mathbf{v}}^{n+1} & = 0,
\\
\label{eq:temptrap}
\frac{2}{k}u^{n+1} + {\mathbf{v}}^{n+1}\cdot \nabla u^{n+1} -
\frac{1}{\mathrm{Re}\,\mathrm{Pr}}\Delta u^{n+1} & = \frac{2}{k}u^{n}+D_{t}u^{n},
\end{align}
for $n=0,1,\ldots , N-1$, where
\begin{align*}
&D_{t}\mathbf{v}^{n}:=-{\mathbf{v}}^{n}\cdot \nabla {\mathbf{v}}^{n} +
\frac{1}{\mathrm{Re}}\Delta {\mathbf{v}}^{n} - \nabla p^{n} +
\frac{\mathrm{Gr}}{\mathrm{Re}^{2}}u^{n}\mathbf{e},
\\
&D_{t}u^{n}:=-\; {\mathbf{v}}^{n}\cdot \nabla u^{n}+
\frac{1}{\mathrm{Re}\,\mathrm{Pr}}\Delta u^{n},
\end{align*}
and ${\mathbf{v}}^{0}, u^{0}, p^{0}$ are assumed to be known. We notice that
boundary conditions in problem $(\tilde{\mathbb{P}})$ remain the same for
the semi-discretized problem since they do not depend on the time variable.

Then, we consider a linearization of a standard weak form of  {(\ref{eq:semi_discrete_Be})}- {(\ref{eq:temptrap})}
on the base of approximating ${\mathbf{v}}^{n+1}$ and $u^{n+1}$ by the linear
extrapolations
$\tilde{{\mathbf{v}}}^{n+1} := 2\,{\mathbf{v}}^{n}-{\mathbf{v}}^{n-1}$ and
$\tilde{u}^{n+1} := 2u^{n}-u^{n-1}$. Thus, for a test triplet
$({\mathbf{y}}, q, y)$ the linearized weak formulation of  {\eqref{eq:semi_discrete_Be}} is given by
%
\begin{equation}
\begin{split}
\label{weakkk1}
&\frac{2}{k}\int _{\Omega }{\mathbf{v}}^{n+1}\cdot {\mathbf{y}}\,\mathrm{d}x + \int _{\Omega }(\tilde{{\mathbf{v}}}^{n+1}\cdot \nabla {\mathbf{v}}^{n+1})\cdot {\mathbf{y}}
\,\mathrm{d}x + \frac{1}{\mathrm{Re}}\int _{\Omega }\nabla {\mathbf{v}}^{n+1}
\cdot \nabla {\mathbf{y}}\,\mathrm{d}x
\\
& \quad - \int _{\Omega }p^{n+1}\cdot \nabla {\mathbf{y}}\,\mathrm{d}x -
\frac{\mathrm{Gr}}{\mathrm{Re}^{2}}\int _{\Omega }u^{n+1} {\mathbf{e}}
\cdot {\mathbf{y}}\,\mathrm{d}x - C_{\mathbf{v}}({\mathbf{v}}^{n+1},
\tilde{{\mathbf{v}}}^{n+1}, {\mathbf{y}})
\\
&\qquad = \frac{2}{k}\int _{\Omega }{\mathbf{v}}^{n}\cdot {\mathbf{y}}\,\mathrm{d}x -
\int _{\Omega }(\tilde{{\mathbf{v}}}^{n+1}\cdot \nabla {\mathbf{v}}^{n+1})\cdot {
\mathbf{y}}\,\mathrm{d}x - \frac{1}{\mathrm{Re}}\int _{\Omega }\nabla {\mathbf{v}}^{n}
\cdot \nabla {\mathbf{y}}\,\mathrm{d}x
\\
& \qquad \quad + \int _{\Omega }p^{n+1}\cdot \nabla {\mathbf{y}}\,
\mathrm{d}x + \frac{\mathrm{Gr}}{\mathrm{Re}^{2}} \int _{\Omega }u^{n}
{\mathbf{e}}\cdot {\mathbf{y}}\,\mathrm{d}x + C_{\mathbf{v}}({\mathbf{v}}^{n}, {\mathbf{v}}^{n},
{\mathbf{y}}),
\end{split}
\end{equation}
where $C_{\mathbf{v}}$ depends on the choice of the boundary condition: For
the directional do-nothing condition $(\mathrm{DDN})$, the linearized boundary
term is
\begin{equation*}
C_{\mathbf{v}}({\mathbf{v}}_{1}, {\mathbf{v}}_{2}, {\mathbf{y}}) = \int _{\Gamma _{\mathrm{o}}}\frac{1}{2}({\mathbf{v}}_{1}\cdot {\mathbf{y}})({\mathbf{v}}_{2}\cdot {\mathbf{n}})_{-}
\,\mathrm{d}\sigma ,
\end{equation*}
and for the do-nothing condition $(\mathrm{DN})$, is
$C_{\mathbf{v}} \equiv 0$.

The weak formulation of the divergence free condition  {\eqref{eq:divfree}} is given by
%
\begin{align}
\int _{\Omega }\mathrm{div}\,{\mathbf{v}}\cdot q\,\mathrm{d}x=0,
\label{eq:linear_system_div_eq}
\end{align}
and the one for  {\eqref{eq:temptrap}} is obtained as
%
\begin{equation}
\label{eq:temptrap2}
\begin{split} &\frac{2}{k}\int _{\Omega }u^{n+1}\cdot y\,\mathrm{d}x +
\int _{\Omega }(\tilde{{\mathbf{v}}}^{n+1}\cdot \nabla u^{n+1})\cdot y\,
\mathrm{d}x+ \frac{1}{\mathrm{Re}\mathrm{Pr}}\int _{\Omega }\nabla u^{n+1}
\nabla y\,\mathrm{d}x
\\
&\quad - C_{u}(u^{n+1}, \tilde{{\mathbf{v}}}^{n+1}, y)
\\
&\qquad = \frac{2}{k}\int _{\Omega }u^{n}\cdot y\,\mathrm{d}x - \int _{\Omega }(\tilde{{\mathbf{v}}}^{n}\cdot \nabla u^{n})\cdot y\,\mathrm{d}x -
\frac{1}{\mathrm{Re}\mathrm{Pr}}\int _{\Omega }\nabla u^{n}\nabla y\,
\mathrm{d}x + C_{u}(u^{n}, {\mathbf{v}}^{n}, y),
\end{split}
\end{equation}
where for $(\mathrm{N}_{\beta })$, we have
\begin{equation*}
C_{u}({\mathbf{v}}, u, y) = \int _{\Gamma _{\mathrm{o}}}u\beta _{i}({\mathbf{v}}
\cdot {\mathbf{n}})({\mathbf{v}}\cdot {\mathbf{n}}) y\,\mathrm{d}\sigma ,
\end{equation*}
and in case of the homogeneous Neumann condition $(\mathrm{N})$, we have
$C_{u} \equiv 0$.

Discretization in space is done by the application of finite elements method
on a regular triangular mesh. The discrete approximations for velocity
and pressure are computed by piecewise quadratic, respectively linear,
functions on mixed $P_{2}$-$P_{1}$ Taylor-Hood triangular elements, and
temperature is approximated by piecewise quadratic functions on
$P_{2}$ elements as well. The variational form is directly obtained from
 {(\ref{weakkk1})}- {(\ref{eq:temptrap2})} by replacing the functions by their
respective finite-dimensional approximations.

The discrete problem is then solved with FEniCS, under the use of LU-decomposition
for solving the linear systems. For each time-step, we utilize that
${\mathbf{v}}^{n+1}$ does not occur in the linearization of the temperature
equation, which allows  {\eqref{eq:temptrap2}} to be solved separately.

\subsection*{Test Results}

Computations are produced for
\begin{equation*}
\mathrm{Re}\in \{2,3,4,5\}, \quad \mathrm{Gr}\in \{500, 1000, 2000
\}, \quad \text{and} \quad \mathrm{Pr}= 1,
\end{equation*}
in the time interval $[0,T]$ for $T=1$.

In order to provide a qualitative idea of solutions, the reference velocity
field and temperature distribution on $\Omega ^{\mathrm{ext}}$ are depicted
in  {Fig.~\ref{fig:vu_r0_side_by_side}} for $\mathrm{Re}=3$,
$\mathrm{Gr}=10^{3}$ and final time $t=T=1$. Furthermore, a solution on
$\Omega $ is shown in  {Fig.~\ref{fig:vu_c3_DN}} for boundary conditions
$(\mathrm{DN})-(\mathrm{N_{\beta _{1}}})$. All numerical solutions are
computed on a regular triangulation with 6489 nodes for a time-discretization
parameter of $k=10^{-2}$.

\begin{figure}[ht]
	\centering
	\begin{subfigure}{0.49\textwidth}	
\includegraphics[width=\textwidth]{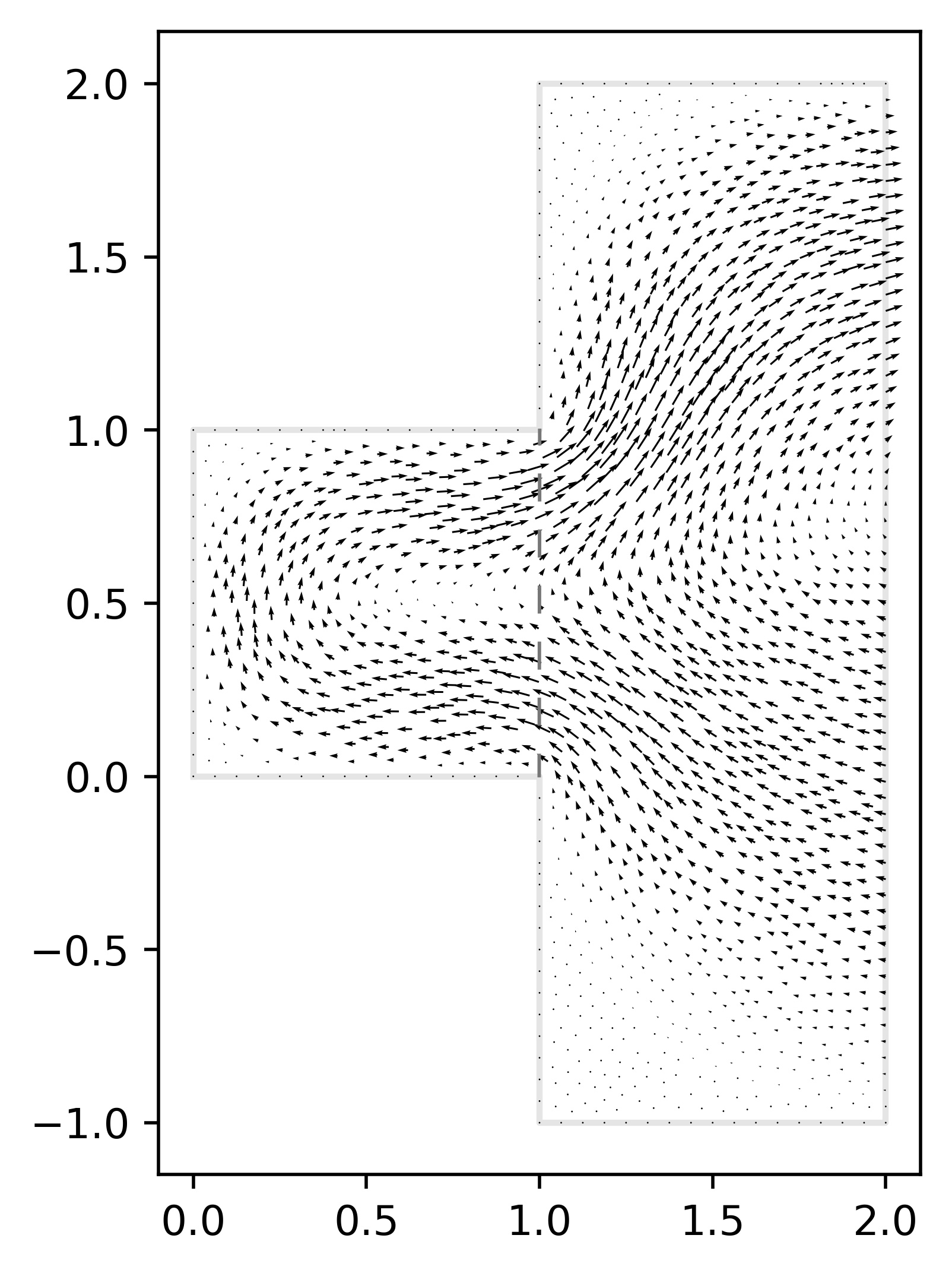}
	\end{subfigure}
	\begin{subfigure}{0.49\textwidth}
		\includegraphics[width=\textwidth]{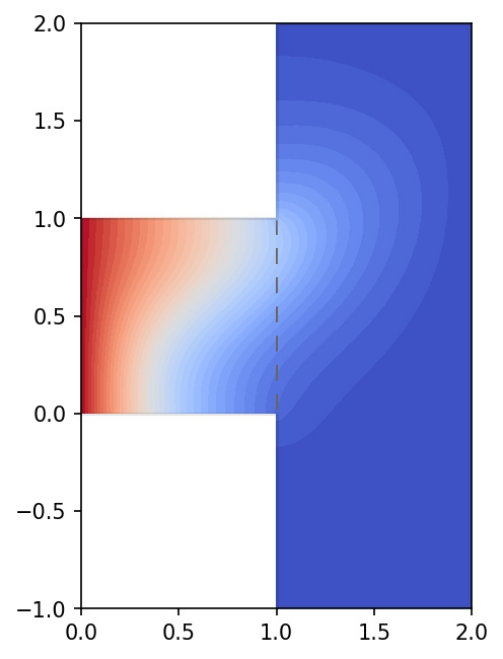}
	\end{subfigure}
	\caption{\small Velocity and temperature of the reference solution on $\Omega^{\mathrm{ext}}$ at $t=T=1$ for $\Rey=3.0$ and $\Gra=10^3$. In the heatmap plot (right), {\color{red}red} corresponds to $u=1$ and {\color{blue}blue} corresponds to $u=0$.}\label{fig:vu_r0_side_by_side}
\end{figure}

\begin{figure}[ht]
	\centering
	\begin{subfigure}{0.48\textwidth}
		\includegraphics[width=\textwidth]{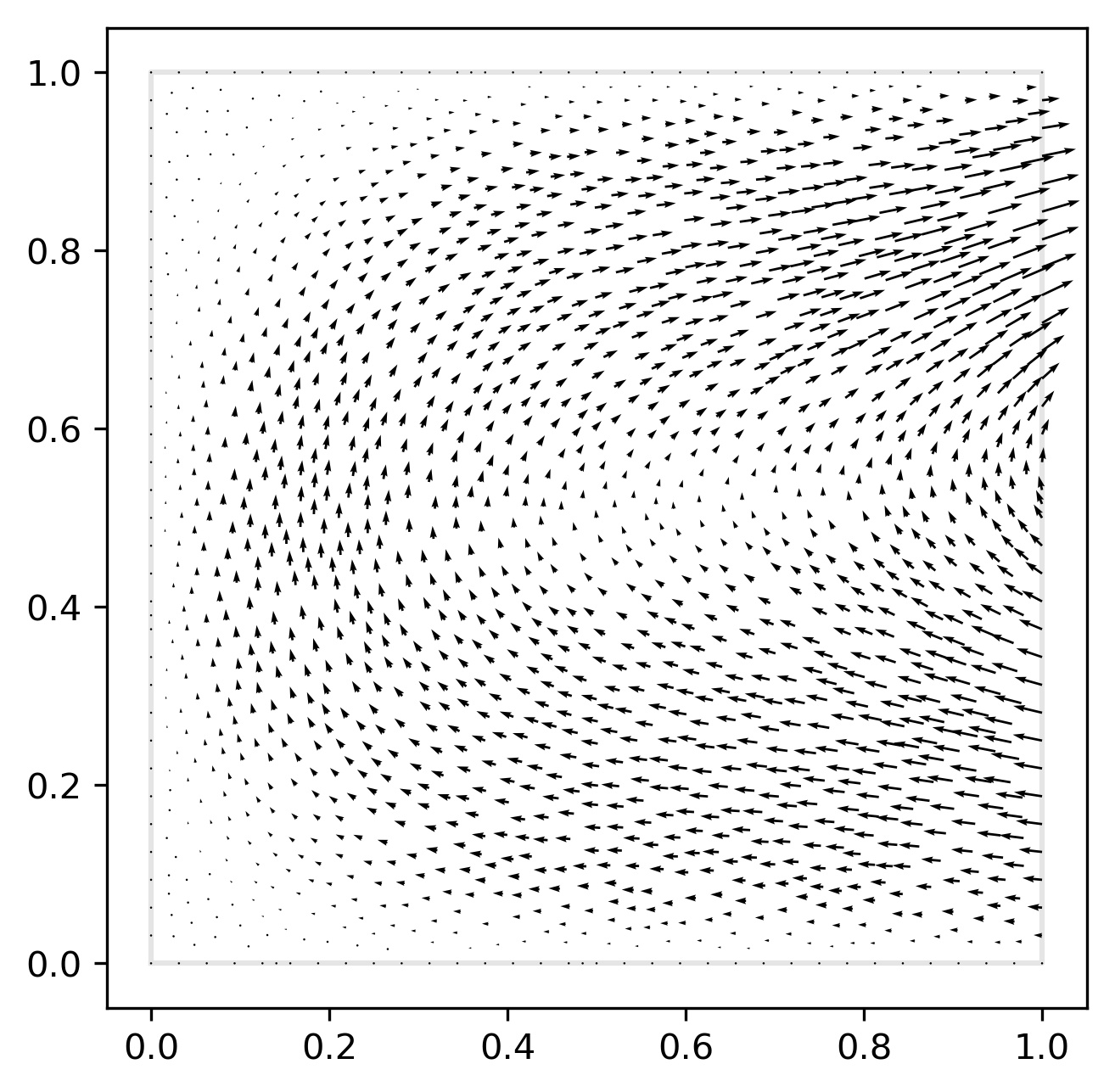}
	\end{subfigure}
	\hspace{0.8em}
	\begin{subfigure}{0.48\textwidth}
		\includegraphics[width=\textwidth]{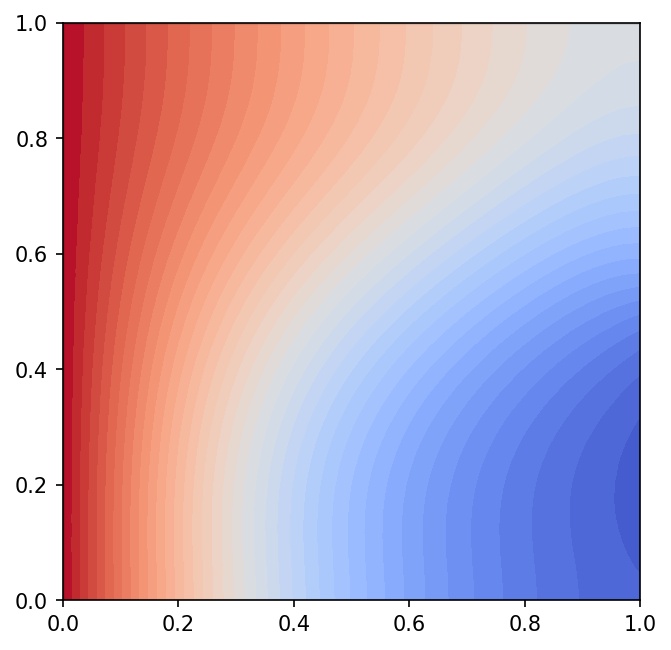}
	\end{subfigure}
	\caption{\small Velocity and temperature of the solution on the truncated domain $\Omega$ at $t=T=1$ for the artificial boundary conditions $(\mathrm{DN})-(\mathrm{N_{\beta_1}})$ for $\Rey=3.0$ and $\Gra=10^3$.}\label{fig:vu_c3_DN}
\end{figure}

With the aim to measure the performance of the solutions
$(\mathbf{v}^{n},u^{n})$ for several combinations of boundary conditions,
we compare them at the final time $t=T=1$ or $n=N$ to the reference solution
$(\mathbf{v}_{\mathrm{ref}}^{n},u_{\mathrm{ref}}^{n})$ by
%
\begin{equation}
\label{eq:res_square_def}
\mathrm{res}_{\Omega }= \|\nabla {\mathbf{v}}^{N} -\nabla {\mathbf{v}}^{N}_{
\text{ref}}\|^{2}_{2} + \|\nabla u^{N} -\nabla u^{N}_{\text{ref}}\|^{2}_{2},
\end{equation}
where $\|\cdot \|_{2}$ denotes the norm in either
$L^{2}(\Omega )^{2\times 2}$ or $L^{2}(\Omega )^{2}$. The values are displayed
in  {Table~\ref{fig:table_residuals_on_square}}. Additionally, we define residuals
over the artificial boundary in the trace sense by
%
\begin{equation}
\label{eq:res_def}
\mathrm{res}_{\Gamma _{\mathrm{o}}} = \|{\mathbf{v}}^{N} -{\mathbf{v}}^{N}_{
\text{ref}}\|^{2}_{L^{2}({\Gamma _{\mathrm{o}}})^{2}} + \|u^{N} -u^{N}_{
\text{ref}}\|^{2}_{L^{2}({\Gamma _{\mathrm{o}}})},
\end{equation}
for which the values are given in  {Table~\ref{fig:table_profile_difference_tables}}. It is clearly seen that both
indicators, $\mathrm{res}_{\Omega }$ and
$\mathrm{res}_{\Gamma _{\mathrm{o}}}$, are consistently improved by the
use of the boundary condition proposed in this paper.

For $\mathrm{Re}=3$ and $\mathrm{Gr}=10^{3}$, the profiles of temperature
and the horizontal component of velocity field are shown in  {Fig.~\ref{fig:profiles_default_comparison}}. One can observe the positive effect
of the proposed boundary condition almost on every point of
$\Gamma _{\mathrm{o}}$. For the velocity profile the advantage of the directional
do-nothing over the do-nothing condition is significant as well.

\newcommand{\vhorizontal}{$|v_{horizontal}|$}
\newcommand{\vfunc}{$|v|$}
\newcommand{\ufunc}{$|u|$}
\newcommand{\vandu}{$|v|+|u|$}
\definecolor{aplot}{RGB}{40,117,169}
\definecolor{bplot}{RGB}{243,129,30}
\definecolor{cplot}{RGB}{51, 153,51}
\definecolor{dplot}{RGB}{198,44 ,44}
\definecolor{eplot}{RGB}{147,105,179}
\definecolor{fplot}{RGB}{136,88 ,78}
\definecolor{gplot}{RGB}{217,125,188}
\begin{figure}[ht]
\begin{subfigure}{0.37\textwidth}
\includegraphics[width=\textwidth]{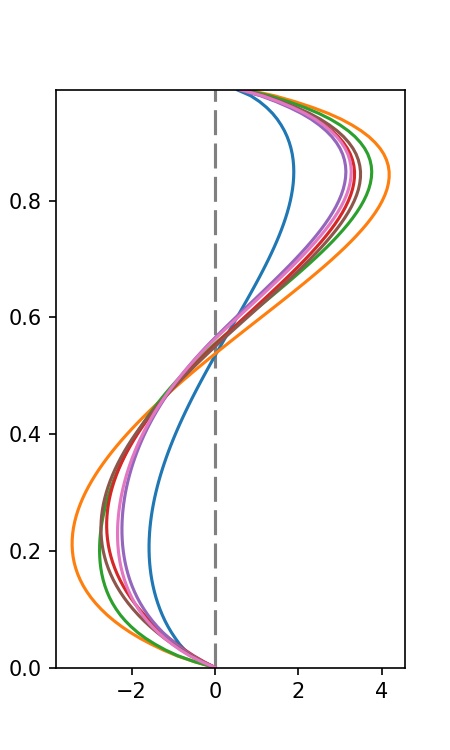}
\caption{\small Normal velocity $\mathbf{v}
\cdot\mathbf{n}$ at $x_1=1$ and time $t=T=1$.}
\end{subfigure}
\begin{subfigure}{0.37\textwidth}
\includegraphics[width=\textwidth]{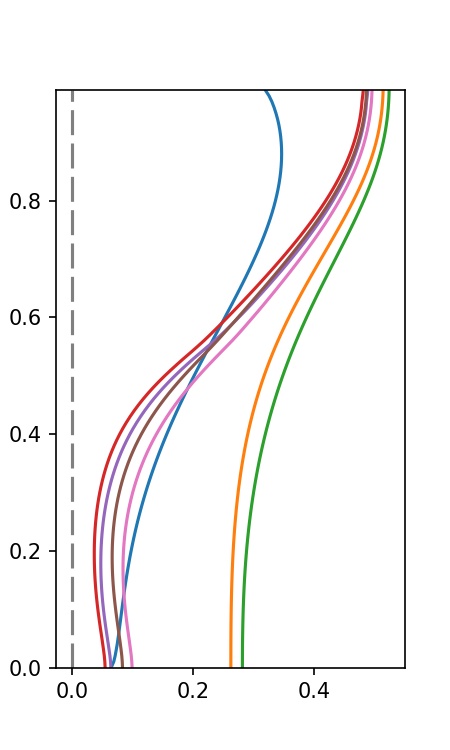}
\caption{\small Temperature $u$ at $x_1=1$ and time $t=T=1$.}
\end{subfigure}
		\begin{tikzpicture}
			\draw[thin] (0,0) rectangle (2.5, 2.8);
			\draw[very thick, color=aplot] (0.1, 2.6) -- (0.4, 2.6) node[anchor=west, color=black]  {\scriptsize{reference}};
			\draw[very thick, color=bplot] (0.1, 2.2) -- (0.4, 2.2) node[anchor=west, color=black]  {\scriptsize{$\mathrm{DN}-\mathrm{N}$}};
			\draw[very thick, color=cplot] (0.1, 1.8) -- (0.4, 1.8) node[anchor=west, color=black] {\scriptsize{$\mathrm{DDN}-\mathrm{N}$}};
			\draw[very thick, color=dplot] (0.1, 1.4) -- (0.4, 1.4) node[anchor=west, color=black] {\scriptsize{$\mathrm{DN}-\mathrm{N_{\beta_1}}$}};
			\draw[very thick, color=eplot] (0.1, 1.0) -- (0.4, 1.0) node[anchor=west, color=black]   {\scriptsize{$\mathrm{DDN}-\mathrm{N_{\beta_1}}$}};
			\draw[very thick, color=fplot] (0.1, 0.6) -- (0.4, 0.6) node[anchor=west, color=black] {\scriptsize{$\mathrm{DN}-\mathrm{N_{\beta_2}}$}};
			\draw[very thick, color=gplot] (0.1, 0.2) -- (0.4, 0.2) node[anchor=west, color=black]   {\scriptsize{$\mathrm{DDN}-\mathrm{N_{\beta_2}}$}};
		\end{tikzpicture}
	\caption{\small Comparison of velocity and temperature profiles for $x_1=1$ at $t=T=1$ for $\Rey=3.0$ and $\Gra=10^3$.}
	\label{fig:profiles_default_comparison}
\end{figure}

\begin{landscape}
\begin{table}[ht]
\begin{subtable}[t]{\linewidth}
\centering
\begin{tabular}{lllllll}
                       & $\mathrm{DN}-\mathrm{N}$         & $\mathrm{DDN}-\mathrm{N}$        & $\mathrm{DN}-\mathrm{N_{\beta_1}}$    & $\mathrm{DDN}-\mathrm{N}_{\beta_1}$             & $\mathrm{DN}-\mathrm{N}_{\beta_2}$    & $\mathrm{DDN}-\mathrm{N}_{\beta_2}$    \\ \hline\\[-0.9em]
$\Rey = 2, \Gra = 500 $&$4.5686\se{1}$&$3.0640\se{1}$&$1.2740\se{1}$&$\mathbf{1.0262\se{1} }$&$1.6526\se{1}$&$1.3113\se{1} $\\
$\Rey = 2, \Gra = 1000$&$9.2430\se{1}$&$5.9206\se{1}$&$2.1157\se{1}$&$\mathbf{1.6392\se{1} }$&$2.4998\se{1}$&$1.9322\se{1} $\\
$\Rey = 2, \Gra = 2000$&$1.3692\se{2}$&$9.3841\se{1}$&$3.6147\se{1}$&$\mathbf{2.6388\se{1} }$&$3.9475\se{1}$&$2.8885\se{1} $\\
$\Rey = 3, \Gra = 500 $&$1.1321\se{1}$&$8.0385      $&$5.6257      $&$\mathbf{4.3795       }$&$6.9018      $&$5.2771       $\\
$\Rey = 3, \Gra = 1000$&$2.5821\se{1}$&$1.6485\se{1}$&$1.0132\se{1}$&$\mathbf{7.5600       }$&$1.1918\se{1}$&$8.7941       $\\
$\Rey = 3, \Gra = 2000$&$4.1072\se{1}$&$2.7252\se{1}$&$1.6176\se{1}$&$\mathbf{1.2097\se{1} }$&$1.7552\se{1}$&$1.3078\se{1} $\\
$\Rey = 4, \Gra = 500 $&$3.6454      $&$2.7909      $&$2.5294      $&$\mathbf{2.0110       }$&$2.8790      $&$2.2633       $\\
$\Rey = 4, \Gra = 1000$&$1.0767\se{1}$&$7.0715      $&$6.1112      $&$\mathbf{4.3871       }$&$7.0493      $&$4.9907       $\\
$\Rey = 4, \Gra = 2000$&$1.7968\se{1}$&$1.1480\se{1}$&$9.5475      $&$\mathbf{6.9977       }$&$1.0305\se{1}$&$7.4671       $\\
$\Rey = 5, \Gra = 500 $&$1.3164      $&$1.0817      $&$1.0841      $&$\mathbf{9.0367\se{-1}}$&$1.1688      $&$9.6915\se{-1}$\\
$\Rey = 5, \Gra = 1000$&$4.8294      $&$3.4058      $&$3.4933      $&$\mathbf{2.5726       }$&$3.8696      $&$2.8191       $\\
$\Rey = 5, \Gra = 2000$&$1.1505\se{1}$&$7.0829      $&$7.2493      $&$\mathbf{4.9087       }$&$7.9045      $&$5.2928       $   
\end{tabular}
\caption{\small Comparison of $\mathrm{res}_{\Omega}^{N}$ for solutions on the truncated domain $\Omega$ with the regarded open boundary conditions.}\label{fig:table_residuals_on_square}
\end{subtable}

\begin{subtable}[t]{\linewidth}
\centering
\begin{tabular}{lllllll}
& $\mathrm{DN}-\mathrm{N}$ & $\mathrm{DDN}-\mathrm{N}$ & $\mathrm{DN}-\mathrm{N}_{\beta_1}$ & $\mathrm{DDN}-\mathrm{N}_{\beta_1}$ & $\mathrm{DN}-\mathrm{N}_{\beta_2}$ & $\mathrm{DDN}-\mathrm{N}_{\beta_2}$ \\ \hline\\[-0.9em]
$\Rey = 2, \Gra = 500 $&$7.7872       $&$4.8659       $&$2.0599       $&$\mathbf{1.4807       }$&$2.7955       $&$2.0111       $\\
$\Rey = 2, \Gra = 1000$&$1.5142\se{1} $&$8.4914       $&$2.8882       $&$\mathbf{1.7664       }$&$3.7089       $&$2.3541       $\\
$\Rey = 2, \Gra = 2000$&$2.1123\se{1} $&$1.2059\se{1} $&$4.2178       $&$\mathbf{2.1686       }$&$4.9436       $&$2.6850       $\\
$\Rey = 3, \Gra = 500 $&$1.9878       $&$1.3626       $&$9.9941\se{-1}$&$\mathbf{7.3864\se{-1}}$&$1.2320       $&$8.9852\se{-1}$\\
$\Rey = 3, \Gra = 1000$&$4.3320       $&$2.4773       $&$1.5613       $&$\mathbf{9.9303\se{-1}}$&$1.9232       $&$1.2313       $\\
$\Rey = 3, \Gra = 2000$&$6.2717       $&$3.2784       $&$1.8124       $&$\mathbf{9.0284\se{-1}}$&$2.1294       $&$1.1164       $\\
$\Rey = 4, \Gra = 500 $&$6.5636\se{-1}$&$4.9420\se{-1}$&$4.6102\se{-1}$&$\mathbf{3.5705\se{-1}}$&$5.2341\se{-1}$&$4.0193\se{-1}$\\
$\Rey = 4, \Gra = 1000$&$1.8422       $&$1.1366       $&$1.0493       $&$\mathbf{6.9750\se{-1}}$&$1.2213       $&$8.0378\se{-1}$\\
$\Rey = 4, \Gra = 2000$&$2.7347       $&$1.3825       $&$1.1693       $&$\mathbf{6.2483\se{-1}}$&$1.3437       $&$7.3074\se{-1}$\\
$\Rey = 5, \Gra = 500 $&$2.4422\se{-1}$&$1.9907\se{-1}$&$2.0160\se{-1}$&$\mathbf{1.6571\se{-1}}$&$2.1718\se{-1}$&$1.7794\se{-1}$\\
$\Rey = 5, \Gra = 1000$&$8.4188\se{-1}$&$5.7008\se{-1}$&$6.1843\se{-1}$&$\mathbf{4.3465\se{-1}}$&$6.8417\se{-1}$&$4.7627\se{-1}$\\
$\Rey = 5, \Gra = 2000$&$1.8147       $&$9.8277\se{-1}$&$1.0915       $&$\mathbf{6.3786\se{-1}}$&$1.2169       $&$7.0854\se{-1}$
\end{tabular}
\caption{\small Comparison of $\mathrm{res}_{\Gamma_\oo}^{N}$ for solutions on the truncated domain $\Omega$ with the regarded open boundary conditions.}\label{fig:table_profile_difference_tables}
\end{subtable}
\caption{\small Comparison of $\mathrm{res}_{\Omega}^{N}$ and $\mathrm{res}_{\Gamma_\oo}^{N}$ for solutions on the truncated domain $\Omega$ with the regarded open boundary conditions.}
\end{table}
\end{landscape}

\section*{Acknowledgements}
C. N. Rautenberg has been supported via the framework of {\sc Matheon} by the Einstein Foundation Berlin within the ECMath project SE19 ``Optimal Network Sensor Placement for Energy Efficiency '' and acknowledges the support of the DFG through the DFG-SPP 1962: Priority Programme ``Non-smooth and Complementarity-based Distributed Parameter Systems: Simulation and Hierarchical Optimization'' phase 1 within Project 11 ``Optimal Control of Elliptic and Parabolic Quasi-Variational Inequalities'', and under Germany's Excellence Strategy - The Berlin Mathematics Research Center MATH+ (EXC-2046/1, project ID: 390685689) within project AA4-3.

\appendix
\section{Regularity of solutions to the evolutionary Stokes problem with mixed boundary conditions}

Although the following result may be obtained from classical ones, the
specific structure of the constants obtained within the bounds is hard
to be inferred from known theorems. In order to keep the paper self-contained,
the proof is given.

In what follows we assume that $\Omega $ is a subset of
$\mathbb{R}^{2}$ that satisfies condition $(\mathrm{A2})$ on page
\pageref{cond:A2}, and the boundary decomposition
$\{\Gamma _{\mathrm{i}},\Gamma _{\mathrm{w}},\Gamma _{\mathrm{o}}\}$ satisfies
the regularity and geometrical conditions assumed in  {Theorem~\ref{Evolutionary-Theorem1}}.
\begin{theorem}%
\label{Ap}
Let ${\mathbf{h}}\in L^{2}(0,T;L^{2}(\Omega )^{2})$ and
${\mathbf{w}}_{\mathrm{o}}\in V_{1}$. Then, the abstract evolutionary Stokes
problem
\begin{equation}
\label{Stokes}
\begin{split} &\langle \partial _{t}{\mathbf{w}}(t),{\mathbf{y}}\rangle +
\frac{1}{\mathrm{Re}}(\nabla {\mathbf{w}}(t),\nabla {\mathbf{y}})_{2}=({\mathbf{h}}(t),{
\mathbf{y}})_{2}\qquad \text{for all ${\mathbf{y}}\in V_{1}$ and a.e. }t\in (0,T),
\\
&{\mathbf{w}}(0)={\mathbf{w}}_{\mathrm{o}}
\end{split}
\end{equation}
has a unique solution ${\mathbf{w}}$, which belongs to
\begin{equation*}
\tilde{W}_{1}(0,T)=\{{\mathbf{w}}\in L^{2}(0,T;W^{2,2}(\Omega )^{2})\cap L^{\infty }(0,T;V_{1}), \partial _{t}{\mathbf{w}}\in L^{2}(0,T;L^{2}(\Omega )^{2})
\},
\end{equation*}
and satisfies the estimate
\begin{equation}
\label{estimate}
\begin{split} \|{\mathbf{w}}\|_{\tilde{W}_{1}(0,T)}:=\,&\|{\mathbf{w}}\|_{L^{2}(0,T;W^{2,2}(
\Omega )^{2})}+ \|{\mathbf{w}}\|_{L^{\infty }(0,T;V_{1})}+ \|\partial _{t}{
\mathbf{w}}\|_{L^{2}(0,T;L^{2}(\Omega )^{2})}
\\
\leq \,& c_{1}(\mathrm{Re})\|{\mathbf{h}}\|_{L^{2}(0,T;L^{2}(\Omega )^{2})}+c_{2}(
\mathrm{Re})\|{\mathbf{w}}_{\mathrm{o}}\|_{V_{1}},
\end{split}
\end{equation}
where
\begin{align*}
c_{1}(\mathrm{Re})&:=C(1+\mathrm{Re}^{1/2}+\mathrm{Re}),\qquad c_{2}(
\mathrm{Re}):=C(1+\mathrm{Re}^{-1/2}+\mathrm{Re}^{1/2}),
\end{align*}
 and $C$ is a positive constant that depends only on $\Omega $.
\end{theorem}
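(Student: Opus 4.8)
The plan is to build the solution by a Galerkin scheme, extract from it the three a~priori bounds that make up the $\tilde{W}_{1}(0,T)$-norm together with their explicit $\mathrm{Re}$-dependence, pass to the limit, and then upgrade spatial regularity to $W^{2,2}$ pointwise in time. Because \eqref{Stokes} is linear, the existence of a weak solution in $W_{1}(0,T)$ satisfying the energy identity is classical (cf.\ Theorem~5 of section~7.1.3 of \cite{Ev2010}), and uniqueness is immediate: subtracting two solutions, testing the difference equation with the difference itself, and using $\tfrac12\tfrac{d}{dt}\|\mathbf{w}\|_{L^{2}}^{2}+\tfrac{1}{\mathrm{Re}}\|\nabla\mathbf{w}\|_{L^{2}}^{2}=0$ with vanishing initial datum forces the difference to be zero. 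The genuine content is thus the regularity and the form of $c_{1},c_{2}$. I would fix a basis $\{\mathbf{e}_{k}\}\subset V_{1}$, orthonormal in $H_{1}$ and orthogonal in $V_{1}$, set $\mathbf{w}_{m}:=\sum_{k=1}^{m}c_{k}(t)\mathbf{e}_{k}$ for the finite-dimensional solution, and take $\mathbf{w}_{m}(0)$ to be the $V_{1}$-projection of $\mathbf{w}_{\mathrm{o}}$, so that $\|\nabla\mathbf{w}_{m}(0)\|_{L^{2}}\le\|\mathbf{w}_{\mathrm{o}}\|_{V_{1}}$.

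The key estimate comes from testing the Galerkin equations with $\mathbf{y}=\partial_{t}\mathbf{w}_{m}(t)$, which is admissible since $\partial_{t}\mathbf{w}_{m}$ lies in the span of the basis. This gives
\begin{equation*}
\|\partial_{t}\mathbf{w}_{m}(t)\|_{L^{2}}^{2}+\frac{1}{2\mathrm{Re}}\frac{d}{dt}\|\nabla\mathbf{w}_{m}(t)\|_{L^{2}}^{2}=(\mathbf{h}(t),\partial_{t}\mathbf{w}_{m}(t))_{2}\le\tfrac12\|\mathbf{h}(t)\|_{L^{2}}^{2}+\tfrac12\|\partial_{t}\mathbf{w}_{m}(t)\|_{L^{2}}^{2}.
\end{equation*}
Absorbing the last term and integrating in $t$ yields, uniformly in $m$ and up to the universal constant $C$,
\begin{equation*}
\|\mathbf{w}_{m}\|_{L^{\infty}(0,T;V_{1})}\le\|\mathbf{w}_{\mathrm{o}}\|_{V_{1}}+\mathrm{Re}^{1/2}\|\mathbf{h}\|_{L^{2}(0,T;L^{2})},\qquad\|\partial_{t}\mathbf{w}_{m}\|_{L^{2}(0,T;L^{2})}\le\mathrm{Re}^{-1/2}\|\mathbf{w}_{\mathrm{o}}\|_{V_{1}}+\|\mathbf{h}\|_{L^{2}(0,T;L^{2})}.
\end{equation*}
Passing to the limit along a subsequence, weak-$*$ in $L^{\infty}(0,T;V_{1})$ and weak in $L^{2}(0,T;L^{2})$ for the time derivatives, recovers both bounds for the limit $\mathbf{w}$ and identifies it with the weak solution.

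It remains to bootstrap to $W^{2,2}$. Since $\partial_{t}\mathbf{w}\in L^{2}(0,T;L^{2}(\Omega)^{2})$, for a.e.\ $t$ the field $\mathbf{w}(t)$ solves the \emph{stationary} Stokes system $-\tfrac{1}{\mathrm{Re}}\Delta\mathbf{w}(t)+\nabla p(t)=\mathbf{h}(t)-\partial_{t}\mathbf{w}(t)$, $\mathrm{div}\,\mathbf{w}(t)=0$, with the mixed boundary conditions defining $V_{1}$. The geometric hypotheses of Theorem~\ref{Evolutionary-Theorem1}—flat $\Gamma_{\mathrm{o}}$ meeting $\Gamma_{\mathrm{i}}\cup\Gamma_{\mathrm{w}}$ at right angles, smooth Dirichlet parts, and finite contact set $A$—are precisely what render the stationary regularity result, Theorem~A.1 of \cite{BeKu2016}, applicable despite the mixed Dirichlet/do-nothing boundary; rescaling that estimate by $\mathrm{Re}$ produces $\|\mathbf{w}(t)\|_{W^{2,2}}\le C\,\mathrm{Re}\,(\|\mathbf{h}(t)\|_{L^{2}}+\|\partial_{t}\mathbf{w}(t)\|_{L^{2}})$ with $C$ depending only on $\Omega$. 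Squaring, integrating in $t$, and inserting the bound on $\partial_{t}\mathbf{w}$ gives $\|\mathbf{w}\|_{L^{2}(0,T;W^{2,2})}\le C\,\mathrm{Re}\,\|\mathbf{h}\|_{L^{2}(0,T;L^{2})}+C\,\mathrm{Re}^{1/2}\|\mathbf{w}_{\mathrm{o}}\|_{V_{1}}$. Summing the three contributions collects the powers of $\mathrm{Re}$ into $c_{1}(\mathrm{Re})=C(1+\mathrm{Re}^{1/2}+\mathrm{Re})$ in front of $\mathbf{h}$ and $c_{2}(\mathrm{Re})=C(1+\mathrm{Re}^{-1/2}+\mathrm{Re}^{1/2})$ in front of $\mathbf{w}_{\mathrm{o}}$, which is \eqref{estimate}. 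The main obstacle is exactly this last step: $W^{2,2}$ regularity for a Stokes system with mixed Dirichlet/do-nothing conditions is delicate because of the corner-type singularities at the interface $A$, and it is the geometric restrictions that salvage it; faithfully tracking the powers of $\mathrm{Re}$ through the rescaled stationary estimate is the other point demanding care.
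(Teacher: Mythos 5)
Your proposal is correct and follows essentially the same route as the paper's proof: a Galerkin scheme tested with $\partial_{t}\mathbf{w}_{m}$ plus Young's inequality to get the $L^{\infty}(0,T;V_{1})$ and $L^{2}(0,T;L^{2})$ bounds with the stated powers of $\mathrm{Re}$, followed by the pointwise-in-time elliptic bootstrap via the stationary Stokes regularity of Theorem A.1 of \cite{BeKu2016} under the geometric hypotheses, and the same bookkeeping yielding $c_{1}$ and $c_{2}$. The only cosmetic differences are that the paper obtains basic existence from Proposition III.2.3 of \cite{Sh1997} rather than spelling out uniqueness directly, and it prescribes the Galerkin initial datum through the $H_{1}$-coefficients rather than the $V_{1}$-projection; both choices give $\|\mathbf{w}_{m}(0)\|_{V_{1}}\leq\|\mathbf{w}_{\mathrm{o}}\|_{V_{1}}$.
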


\begin{proof}
From Proposition III.2.3 of \cite{Sh1997}, we know that there exists a
unique ${\mathbf{w}}\in W_{1}(0,T)$ that solves problem  {\eqref{Stokes}}. The
increased regularity of ${\mathbf{w}}$, i.e., that
${\mathbf{w}}\in \tilde{W}_{1}(0,T)$, can be obtained by analogous arguments
to those leading to Theorem 5 of section 7.1.3 of \cite{Ev2010} in combination
with the regularity result for steady Stokes problems given in Theorem
A.1 of \cite{BeKu2016}. We describe them below.

Let $\{{\mathbf{y}}_{1},{\mathbf{y}}_{2},\ldots \}$ be an orthogonal basis of
$V_{1}$ that is also an orthonormal basis of $H_{1}$, and let
$m\in \mathbb{N}$. The same techniques for proving the existence and uniqueness
of Galerkin approximations for parabolic problems (see, e.g., Theorem 1
of section 7.1.2 of \cite{Ev2010}) yield the existence and uniqueness of
an element ${\mathbf{w}}_{m}$ of the form
\begin{equation}
\label{GalerkinApprox}
{\mathbf{w}}_{m}(t):=\displaystyle \sum _{k=1}^{m}d_{m}^{k}(t){\mathbf{y}}_{k}
\qquad \text{for a.e. }t\in (0,T),
\end{equation}
that satisfies
\begin{align}
\label{Eq}
&(\partial _{t}{\mathbf{w}}_{m}(t),{\mathbf{y}}_{k})_{2}+\frac{1}{\mathrm{Re}}(
\nabla {\mathbf{w}}_{m}(t),\nabla {\mathbf{y}}_{k})_{2}=({\mathbf{h}}(t),{\mathbf{y}}_{k})_{2},
\\
\label{IC}
&d_{m}^{k}(0)=({\mathbf{w}}_{\mathrm{o}},{\mathbf{y}}_{k})_{2},
\end{align}
for all ${\mathbf{y}}\in V_{1}$ and a.e. $t\in (0,T)$, where
$k=1,\ldots ,m$.

Let $t\in (0,T)$. We choose ${\mathbf{y}}=\partial _{t}{\mathbf{w}}_{m}(t)$ in  {\eqref{Eq}} to obtain that
\begin{equation*}
\|\partial _{t}{\mathbf{w}}_{m}(t)\|_{L^{2}(\Omega )^{2}}^{2}+
\frac{1}{\mathrm{Re}}(\nabla {\mathbf{w}}_{m}(t),\nabla \partial _{t}{
\mathbf{w}}_{m}(t))_{2}=({\mathbf{h}}(t),\partial _{t}{\mathbf{w}}_{m}(t))_{2}.
\end{equation*}
Then, taking into account that
\begin{equation*}
(\nabla {\mathbf{w}}_{m}(t),\nabla \partial _{t}{\mathbf{w}}_{m}(t))_{2}=
\frac{1}{2}\partial _{t}\|{\mathbf{w}}_{m}(t)\|_{V_{1}}^{2}
\end{equation*}
and that by Young's inequality we have
\begin{equation*}
|({\mathbf{h}}(t),\partial _{t}{\mathbf{w}}_{m}(t))_{2}|\leq
\frac{1}{2\varepsilon }\|\partial _{t}{\mathbf{w}}_{m}(t)\|_{L^{2}(\Omega )}^{2}+
\frac{\varepsilon }{2}\|{\mathbf{h}}(t)\|_{L^{2}(\Omega )^{2}}^{2},
\end{equation*}
for any $\varepsilon >0$, we obtain that
\begin{equation*}
\|\partial _{t}{\mathbf{w}}_{m}(t)\|^{2}_{L^{2}(\Omega )^{2}}+
\frac{1}{2\,\mathrm{Re}}\partial _{t}\|{\mathbf{w}}_{m}(t)\|_{V_{1}}^{2}
\leq \frac{1}{2\varepsilon }\|\partial _{t}{\mathbf{w}}_{m}(t)\|_{L^{2}(
\Omega )}^{2}+\frac{\varepsilon }{2}\|{\mathbf{h}}(t)\|_{L^{2}(\Omega )^{2}}^{2}.
\end{equation*}
Selecting $\varepsilon =1$, we find that
\begin{equation}
\label{EqNorms}
\|\partial _{t}{\mathbf{w}}_{m}(t)\|^{2}_{L^{2}(\Omega )^{2}}+
\frac{1}{\mathrm{Re}}\partial _{t}\|{\mathbf{w}}_{m}(t)\|_{V_{1}}^{2}
\leq \|{\mathbf{h}}(t)\|_{L^{2}(\Omega )^{2}}^{2}.
\end{equation}

Integration of  {\eqref{EqNorms}} from $0$ to $t$ yields
\begin{equation*}
\int _{0}^{t}\|\partial _{s}{\mathbf{w}}_{m}(s)\|^{2}_{L^{2}(\Omega )^{2}}
\,\mathrm{d}s+\frac{1}{\mathrm{Re}}\|{\mathbf{w}}_{m}(s)\|_{V_{1}}^{2}
\leq \int _{0}^{t}\|{\mathbf{h}}(s)\|_{L^{2}(\Omega )^{2}}^{2}\,
\mathrm{d}s+\frac{1}{\mathrm{Re}}\|{\mathbf{w}}_{m}(0)\|_{V_{1}}^{2}.
\end{equation*}
Thus, taking into account that
$\|{\mathbf{w}}_{m}(0)\|_{V_{1}}\leq \|{\mathbf{w}}_{\mathrm{o}}\|_{V_{1}}$ by  {\eqref{IC}}, we have
\begin{equation*}
\int _{0}^{t}\|\partial _{s}{\mathbf{w}}_{m}(s)\|^{2}_{L^{2}(\Omega )^{2}}
\,\mathrm{d}s+\frac{1}{\mathrm{Re}}\|{\mathbf{w}}_{m}(s)\|_{V_{1}}^{2}
\leq \int _{0}^{t}\|{\mathbf{h}}(s)\|_{L^{2}(\Omega )^{2}}^{2}\,
\mathrm{d}s+\frac{1}{\mathrm{Re}}\|{\mathbf{w}}_{\mathrm{o}}\|_{V_{1}}^{2},
\end{equation*}
and since $t\in (0,T)$ was arbitrary, we observe
\begin{equation}
\label{EqLimit}
\begin{split} &\int _{0}^{T}\|\partial _{t}{\mathbf{w}}_{m}(t)\|^{2}_{L^{2}(
\Omega )^{2}}\,\mathrm{d}t+\frac{1}{\mathrm{Re}}\sup \{\|{\mathbf{w}}_{m}(t)
\|_{V_{1}}^{2}:t\in (0,T)\}
\\
&\quad \leq \|{\mathbf{h}}\|_{L^{2}(0,T;L^{2}(\Omega )^{2}}^{2}+
\frac{1}{\mathrm{Re}}\|{\mathbf{w}}_{\mathrm{o}}\|_{V_{1}}^{2}.
\end{split}
\end{equation}

Passing to the limit as $m\to \infty $ in  {\eqref{EqLimit}} we obtain that
$\partial _{t}{\mathbf{w}}\in L^{2}(0,T;L^{2}(\Omega )^{2})$,
${\mathbf{w}}(t)\in L^{\infty }(0,T;V_{1})$, and that the following estimates
hold true:
\begin{align}
\label{Estimate-Derivative}
\|\partial _{t}{\mathbf{w}}\|_{L^{2}(0,T;L^{2}(\Omega )^{2})}^{2}\leq \,&
\|{\mathbf{h}}\|^{2}_{L^{2}(0,T;L^{2}(\Omega )^{2})}+
\frac{1}{\mathrm{Re}}\|{\mathbf{w}}_{\mathrm{o}}\|_{V_{1}}^{2},
\\
\label{Estimate-Linfty}
\|{\mathbf{w}}\|^{2}_{L^{\infty }(0,T;V_{1})}\leq \,& \mathrm{Re}\,\|{\mathbf{h}}
\|^{2}_{L^{2}(0,T;L^{2}(\Omega )^{2})}+\|{\mathbf{w}}_{\mathrm{o}}\|^{2}_{V_{1}}.
\end{align}

Let $t\in (0,T)$. Notice that ${\mathbf{w}}$ satisfies
\begin{equation*}
\frac{1}{\mathrm{Re}}(\nabla {\mathbf{w}}(t),\nabla {\mathbf{y}})_{2}=(
\tilde{{\mathbf{h}}}(t),{\mathbf{y}})_{2},
\end{equation*}
for all ${\mathbf{y}}\in V_{1}$, where
$\tilde{{\mathbf{h}}}(t):={\mathbf{h}}(t)-\partial _{t}{\mathbf{w}}(t)\in L^{2}(
\Omega )^{2}$. Therefore, it follows from Theorem A.1 of
\cite{BeKu2016} and assumptions on the boundary decomposition
$\{\Gamma _{\mathrm{i}},\Gamma _{\mathrm{w}},\Gamma _{\mathrm{o}}\}$ that
${\mathbf{w}}(t)\in W^{2,2}(\Omega )^{2}$, satisfying
\begin{equation*}
\begin{split} \|{\mathbf{w}}(t)\|_{W^{2,2}(\Omega )^{2}}\leq C\mathrm{Re}
\,\|\tilde{{\mathbf{h}}}(t)\|_{L^{2}(\Omega )},
\end{split}
\end{equation*}
 and thus,
\begin{equation*}
\begin{split} {\|{\mathbf{w}}(t)\|_{W^{2,2}(\Omega )^{2}}\leq C\mathrm{Re}
\,(\|{\mathbf{h}}(t)\|_{L^{2}(\Omega )^{2}}+\|\partial _{t}{\mathbf{w}}(t)\|_{L^{2}(
\Omega )^{2}})}.
\end{split}
\end{equation*}
Integrating the square of the above inequality with respect to $t$ from
$0$ to $T$, and using  {\eqref{Estimate-Derivative}}, we obtain that
\begin{equation*}
\begin{split} {\int _{0}^{T}\|{\mathbf{w}}(t)\|_{W^{2,2}(\Omega )^{d}}^{2}
\,\mathrm{d}t\leq \, C\mathrm{Re}^{2}\,\|{\mathbf{h}}\|^{2}_{L^{2}(0,T;L^{2}(
\Omega )^{2})}+ C\mathrm{Re}\,\|{\mathbf{w}}_{\mathrm{o}}\|_{V_{1}}^{2}},
\end{split}
\end{equation*}
that is, ${\mathbf{w}}\in L^{2}(0,T;W^{2,2}(\Omega )^{2})$ and
\begin{equation}
\label{EstimateW22}
{\|{\mathbf{w}}\|_{L^{2}(0,T;W^{2,2}(\Omega )^{2})}^{2}\leq C\mathrm{Re}^{2}
\,\|{\mathbf{h}}\|^{2}_{L^{2}(0,T;L^{2}(\Omega )^{2})}+ C\mathrm{Re}\,\|{
\mathbf{w}}_{\mathrm{o}}\|_{V_{1}}^{2}}.
\end{equation}

From  {\eqref{Estimate-Derivative}},  {\eqref{Estimate-Linfty}}, and  {\eqref{EstimateW22}} we find that ${\mathbf{w}}\in \tilde{W}_{1}(0,T)$, and in
addition, we have
\begin{equation*}
\begin{split} \|{\mathbf{w}}\|^{2}_{\tilde{W}_{1}(0,T)}\leq \,&C(1+
\mathrm{Re}+\mathrm{Re}^{2})\|{\mathbf{h}}\|^{2}_{L^{2}(0,T;L^{2}(
\Omega )^{2})}
\\
&+ C(1+\mathrm{Re}^{-1}+\mathrm{Re})\|{\mathbf{w}}_{\mathrm{o}}\|^{2}_{V_{1}},
\end{split}
\end{equation*}
which yields the desired estimate  {\eqref{estimate}}.
\end{proof}
\bibliographystyle{plain} 
\bibliography{References}
\end{document}